\newcommand{\T}{\mathcal{T}_{g,n}}
\newcommand{\Q}{\mathcal{QT}_{g,n}}
\newcommand{\F}{\mathcal{F}}
\newcommand{\G}{\mathcal{G}}
\newcommand{\MF}{\mathcal{MF}}
\newcommand{\PMF}{\mathcal{PMF}}
\newcommand{\MFind}{\mathcal{MF}_{ind}}
\newtheorem{theorem}{\rm\bf Theorem}[section]
\newtheorem{proposition}[theorem]{\rm\bf Proposition}
\newtheorem{lemma}[theorem]{\rm\bf Lemma}
\newtheorem{corollary}[theorem]{\rm\bf Corollary}
\newtheorem*{theorem 1}{\rm\bf Proposition 1}
\newtheorem*{theorem 2}{\rm\bf Proposition 2}
\theoremstyle{definition}
\newtheorem{definition}[theorem]{\rm\bf Definition}
\theoremstyle{remark}
\newtheorem{remark}[theorem]{\rm\bf Remark}
\def\interieur#1{\mathord{\mathop{\kern 0pt #1}\limits^\circ}}
\title[Horospheres in Teichm\"uller space]{Horospheres in Teichm\"uller space and mapping class group}
\author{Weixu Su}
\address{Weixu Su, School of Mathematics, Fudan University, 200433, Shanghai, P. R. China}
\email{suwx@fudan.edu.cn}
\author{Dong Tan}
\address{Dong Tan, Guangxi Center for Mathematical Research, Guangxi University, 530000, Nanning, P. R. China}
\email{duzuizhe2013@foxmail.com}
\date{\today}
\begin{document}

\maketitle
\begin{abstract}
We study the geometry of horospheres in
Teichm\"uller space  of Riemann surfaces of genus $g$ with $n$ punctures, where $3g-3+n\geq 2$.
We show that every $C^1$-diffeomorphism of Teichm\"uller space to itself that preserves horospheres
is an element of the extended mapping class group.
Using the relation between horospheres and metric balls,
we obtain a new proof of Royden's Theorem that
the isometry group of the Teichm\"uller metric is the extended mapping class group.
\end{abstract}

\noindent AMS Mathematics Subject Classification:   32G15; 30F30; 30F60.

\noindent Keywords: {Extremal length; horosphere; mapping class group; Teichm\"uller space.}\\


\section{Introduction}\label{sec:into}
In this paper, we study the geometry of horospheres in Teichm\"uller space.
As an application, we give a new proof of Royden's Theorem that
every isometry of Teichm\"uller space with respect to the Teichm\"uller metric
is induced by an element of the mapping class group.
Our results rely heavily on the theory of measured foliations as found and developed in \cite{thurston1988geometry,fathi1979travaux,ivanov2001isometries,lenzhen2010criteria}.

\subsection{Background}
Let $S=S_{g,n}$ be a Riemann surface of genus $g$   with $n$ punctures,
and let $\mathcal{T}_{g,n}$ be the Teichm\"uller space of $S$.
We endow $\mathcal{T}_{g,n}$ with the Teichm\"uller metric.
Throughout this paper, we assume that $3g -3 + n \geq 2$.

Much of the study of Teichm\"uller space is inspired by analogies with negatively curved spaces.
The Teichm\"uller metric is a complete Finsler metric,
with  very rich geometry involving extremal lengths of measured foliations.
The Teichm\"uller geodesic flow and horocycle flow are
ergodic on the moduli space, with respect to the Masur-Veech measure.

Let $\mathcal{MF} = \mathcal{MF}(S)$ be the space of measured foliations on $S$.
Denote the space of projective classes in $\mathcal{MF}$ by $\mathcal{PMF}$.
Topologically, $\mathcal{PMF}$ is a sphere of dimension $6g-7+2n$.
Thurston \cite{thurston1988geometry} showed that  $\mathcal{T}_{g,n}$ admits a natural compactification,
whose boundary can be identified with $\mathcal{PMF}$.
A generic pair of transverse measured foliations $\mathcal{F},\mathcal{G}\in \mathcal{MF}$
determines a unique Teichm\"uller geodesic,
which has the projective classes of $\mathcal{F}$ and $\mathcal{G}$
as its ``limits"  on $\mathcal{PMF}$.

\subsection{Main theorems}
Level sets of extremal length functions in Teichm\"uller space, associated with measured foliations,
are called \emph{horospheres}.
The notion is motivated by the fact that extremal length functions on $\mathcal{T}_{g,n}$  are Hamiltonian functions of the Teichm\"uller horocycle flow
\cite{masur1995teichmuller}.

\begin{definition}
We say that a  diffeomorphism $f: \mathcal{T}_{g,n}\to \mathcal{T}_{g,n}$ \emph{preserves horospheres} if the image of any horosphere under $f$ is a horosphere.
\end{definition}

\begin{remark}
In this paper, we require that $f$ is a $C^1$-diffeomorphism.
The smoothness is just used to show that the inverse $f^{-1}$ also preserves horospheres (see Lemma \ref{Inverse_is_true}).
\end{remark}

 Our main result is the following:
\begin{theorem}\label{Result four}
Let $f: \mathcal{T}_{g,n}\to \mathcal{T}_{g,n}$ be a diffeomorphism that preserves horospheres.
Then $f$ is induced by an element of the extended mapping class group.
\end{theorem}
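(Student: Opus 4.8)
The plan is to show that preserving horospheres forces $f$ to induce a symmetry of the combinatorial structure of $\MF$, and then to combine rigidity of that structure with the fact that extremal length functions determine points of $\T$. First I would produce an induced map on projective measured foliations. For each $\F\in\MF$ the level sets of the extremal length function $X\mapsto \mathrm{Ext}_X(\F)$ form a foliation of $\T$ by horospheres, and this foliation depends only on the projective class $[\F]$; conversely two foliations with the same family of horospheres have the same projective class, since $\mathrm{Ext}_X(\F)$ and $\mathrm{Ext}_X(\G)$ share all their level sets only when $[\F]=[\G]$. Because $f$ carries horospheres to horospheres, it carries the horosphere family of $[\F]$ to the horosphere family of a well-defined class $\Phi([\F])\in\PMF$; by Lemma \ref{Inverse_is_true} the same construction applied to $f^{-1}$ yields a two-sided inverse, so $\Phi:\PMF\to\PMF$ is a bijection, and continuity of extremal length and of $f$ makes it a homeomorphism.

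Next I would transport the geometric structure of $\PMF$ through $\Phi$. The key is to recognize, purely from the mutual position of horospheres, when a pair $[\F],[\G]$ fills the surface, i.e. jointly determines a Teichm\"uller geodesic line with vertical and horizontal foliations $\F,\G$ (cf. \cite{thurston1988geometry,lenzhen2010criteria}): a filling pair is exactly one whose two horosphere families are ``jointly proper'', in the sense that their common intersections remain in a compact part of $\T$. Since $f$ simultaneously preserves the horospheres of $\F$ and of $\G$, the map $\Phi$ preserves filling pairs, and more generally one extracts from this the full incidence data of the intersection pairing. In particular $\Phi$ preserves the set of projective classes of simple closed curves together with their disjointness relation, so it induces an automorphism of the curve complex of $S$.

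By rigidity of the curve complex and of the intersection pairing on $\MF$ (Ivanov's theorem, valid here since $3g-3+n\geq 2$ rules out the exceptional surfaces; see \cite{ivanov2001isometries}), this automorphism is realized by a unique element $\phi$ of the extended mapping class group, so that $\Phi=\phi_\ast$ on $\PMF$. Replacing $f$ by $\phi^{-1}\circ f$ we may assume $\Phi=\mathrm{id}$, i.e. $f$ preserves every individual horosphere family; thus for each $\F$ there is an increasing reparametrization $\psi_\F$ with $\mathrm{Ext}_{f(X)}(\F)=\psi_\F\big(\mathrm{Ext}_X(\F)\big)$ for all $X$. Using the homogeneity $\mathrm{Ext}_X(t\F)=t^2\,\mathrm{Ext}_X(\F)$ to constrain the $\psi_\F$ across scalings, together with the fact that the collection $\{\mathrm{Ext}_X(\F)\}_{\F\in\MF}$ separates and indeed determines the point $X\in\T$, I would conclude that each $\psi_\F$ is the identity, whence $\mathrm{Ext}_{f(X)}\equiv\mathrm{Ext}_X$ and therefore $f=\mathrm{id}$. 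Unwinding the substitution gives $f=\phi$, an element of the extended mapping class group.

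The main obstacle I expect is the middle step: extracting the intersection-theoretic (filling and transversality) structure of $\MF$ from the incidence pattern of horospheres alone, since ``preserving horospheres'' is a priori much weaker than ``preserving $i(\cdot,\cdot)$''. Making the characterization of filling pairs via joint properness of horosphere families precise, and checking it is invariant under $f$, is the heart of the argument; a secondary difficulty is ruling out nontrivial reparametrizations $\psi_\F$ in the final step, which is precisely where the $C^1$ hypothesis (through Lemma \ref{Inverse_is_true}) and the separation property of extremal lengths enter.
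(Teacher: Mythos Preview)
Your outline has the right overall shape (induce a map on projective foliations, invoke curve-complex rigidity, reduce to the identity), but there are two genuine gaps that the paper handles quite differently.

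\textbf{Well-definedness of $\Phi$.} You assert that $f$ sends the entire horosphere family of $[\F]$ to the family of a single $\Phi([\F])$. This is not automatic: knowing that $f(\operatorname{HS}(\F,s_1))=\operatorname{HS}(\G_1,t_1)$ and $f(\operatorname{HS}(\F,s_2))=\operatorname{HS}(\G_2,t_2)$ are both horospheres does not tell you $[\G_1]=[\G_2]$. The paper proves this (Proposition~\ref{Result two}) only for \emph{indecomposable} $\F$, and the argument is substantive: one shows $f$ preserves horoballs (Corollary~\ref{MHTH}), then uses the inclusion $\operatorname{HB}(\F,s_1)\subset\operatorname{HB}(\F,s_2)$ together with Proposition~\ref{Main-Lemma-One}, which says that $\operatorname{HB}(\F,s)\subset\operatorname{HB}(\G,t)$ forces $\G=k\F$ \emph{precisely when $\F$ is indecomposable}. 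For decomposable $\F$ this fails (Lemma~\ref{Estimate of extremal length}(2)), so your $\Phi$ as a map on all of $\PMF$ is not obviously well-defined. The paper therefore works with the induced bijection $f_*:\MF_{\mathrm{ind}}\to\MF_{\mathrm{ind}}$, and the passage to the curve complex goes through zero-intersection (Proposition~\ref{preserves indecomposable}) and a dimension argument on the set $\mathcal N(\F)$ (Proposition~\ref{automorphism of curve complex}), not through a ``joint properness'' characterization of filling pairs.

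\textbf{Ruling out the reparametrizations $\psi_\F$.} Homogeneity under $\F\mapsto t\F$ only gives the compatibility $\psi_{t\F}(t^2s)=t^2\psi_\F(s)$, which is automatic since $\operatorname{HS}(t\F,t^2s)=\operatorname{HS}(\F,s)$; it does not constrain $\psi_\F$ itself. Even granting $\Phi=\mathrm{id}$, nothing you have said rules out, say, $\psi_\alpha(s)=c_\alpha s$ with varying constants $c_\alpha$. The paper resolves this with the tangency criterion (Lemma~\ref{condition of tangential}): two horospheres $\operatorname{HS}(\alpha,s)$ and $\operatorname{HS}(\beta,t)$ are tangent iff $st=i(\alpha,\beta)^2$, and tangency is manifestly preserved by $f$. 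Using triples of mutually tangent horospheres (Lemmas~\ref{Dense two} and~\ref{Dense one}), one finds that the tangent point on each geodesic $\mathbf G_{\alpha,\beta}$ is uniquely determined by the curves, hence fixed by $f$; density then gives $f=\mathrm{id}$. Your separation-of-points argument via $\{\mathrm{Ext}_X(\F)\}$ does not by itself supply this rigidity without first controlling the $\psi_\F$.
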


\begin{remark}
We exclude the case that $(g,n)=(1,0), (1,1)$ or $(0,4)$,
when $\T$ is isometric to the hyperbolic plane $\mathbb{H}^2$.
In this two-dimensional case, level sets of extremal length functions are horocycles in $\mathbb{H}^2$.
For instance, on the Teichm\"uller space of flat tori, any point $\tau\in \mathbb{H}^2$
 corresponds to a marked Riemann surface defined as
the quotient space of $\mathbb{C}$ by a lattice generated by $\langle z\mapsto z + 1, z\mapsto z + \tau\rangle$;
and the extremal length of the closed curve corresponding to $(1,0)$ is equal to $1/\mathrm{Im} \tau$.
It is not hard to check that if $f:\mathbb{H}^2\to \mathbb{H}^2$ is a diffeomorphism that preserves
horocycles, then $f$ also preserves geodesics. Any bijection between hyperbolic space that preserves geodesics
is an isometry \cite{JJ}. Thus $f\in \mathrm{PSL}(2,\mathbb{R})$. However, the mapping class group of
the torus is $\mathrm{PSL}(2,\mathbb{Z})$.
\end{remark}

The proof of Theorem \ref{Result four} is inspired by Ivanov's geometric proof of Royden's Theorem
 \cite{ivanov2001isometries}.
The vague idea is that, the action of $f$ on horospheres
should induce an action on the space of (projective) measured foliations.
In fact, there is a subset of $\mathcal{MF}$ with full measure,
on which the action induced by $f$ is an isomorphism.

Let us explain more details.
A measured foliation is \emph{indecomposable} if  it is equivalent
either to  a simple closed curve or to some minimal component with an ergodic measure
(see \S \ref{MF} for the precise definition).
Denote by $\mathcal{MF}_{ind}$ the set of indecomposable measured foliations.
It is well known that $\mathcal{MF}_{ind}$ is a subset of $\mathcal{MF}$ with full measure.
For $\mathcal{F} \in \mathcal{MF}$ and $X \in \mathcal{T}_{g,n}$,
we denote by $\operatorname{HS}(\mathcal{F}, X)$ the horosphere associated with $\F$
and passing through $X$.

With the above terminologies, we prove:
\begin{proposition}\label{Result two} Let $f: \mathcal{T}_{g,n}\to \mathcal{T}_{g,n}$ be a diffeomorphism that preserves horospheres.
  Assume that $\mathcal{F}\in \mathcal{MF}_{ind}$  and $f[\operatorname{HS}(\mathcal{F}, X)] = \operatorname{HS}(\mathcal{G}, Y)$. Then
  \begin{enumerate}
    \item $ f[\operatorname{HS}(\mathcal{F}, Z)] = \operatorname{HS}(\mathcal{G}, f(Z))$ for all $Z \in \mathcal{T}_{g,n}$;
    \item $\mathcal{G}\in \mathcal{MF}_{ind}$.
  \end{enumerate}
\end{proposition}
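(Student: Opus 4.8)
The plan is to exploit the fact that, for a fixed class, the horospheres $\operatorname{HS}(\F,Z)$ are exactly the level sets of the single function $\operatorname{Ext}_\F$, so that as $Z$ ranges over $\T$ they foliate $\T$ by pairwise disjoint leaves, all ``centered'' at $[\F]\in\PMF$. Since $f$ is a homeomorphism carrying horospheres to horospheres, it sends this foliation to a foliation of $\T$ by horospheres, and the whole content of part (1) is that the image leaves all share a single center $[\G]$. The engine for this is a disjointness dichotomy for horospheres, which I would isolate as a lemma: if $\operatorname{HS}(\G_1,\cdot)$ and $\operatorname{HS}(\G_2,\cdot)$ are disjoint, then $[\G_1]=[\G_2]$; equivalently, horospheres of two distinct projective classes always meet. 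I would prove this by showing that the joint map $Z\mapsto(\operatorname{Ext}_{\G_1}(Z),\operatorname{Ext}_{\G_2}(Z))$ meets every prescribed pair of positive values, using the flexibility of prescribing extremal lengths for a jointly filling pair together with a connectedness argument on $\T$.

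Granting this dichotomy, part (1) is essentially bookkeeping. For $Z$ on the same leaf as $X$ the image horosphere is literally $\operatorname{HS}(\G,Y)$, while for $Z$ on a different leaf the leaves $\operatorname{HS}(\F,Z)$ and $\operatorname{HS}(\F,X)$ are disjoint, hence so are their images, and the dichotomy forces the image of $\operatorname{HS}(\F,Z)$ to have projective class $[\G]$ as well. Normalizing $\G$ and observing that $Z\in\operatorname{HS}(\F,Z)$ gives $f(Z)\in f[\operatorname{HS}(\F,Z)]$, one concludes $f[\operatorname{HS}(\F,Z)]=\operatorname{HS}(\G,f(Z))$ for every $Z$. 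Here indecomposability of $\F$ is used only to guarantee, via the results of the earlier sections, that each level set of $\operatorname{Ext}_\F$ is a single connected horosphere, so that distinct leaves really are disjoint horospheres and $f$ carries each onto a single horosphere.

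For part (2) I would characterize membership in $\MFind$ by a property of the horosphere family that is manifestly invariant under $f$ and $f^{-1}$ (recall that $f^{-1}$ also preserves horospheres by Lemma \ref{Inverse_is_true}). The natural candidate is an ``indivisibility'' of the foliation-by-horospheres: a decomposable class $\G=\G_1+\G_2$, with $\G_1,\G_2$ supported on disjoint subsurfaces, should produce auxiliary horosphere families $\operatorname{HS}(\G_i,\cdot)$ standing in a fixed incidence relation to $\{\operatorname{HS}(\G,\cdot)\}$; at the differential level this is visible through Gardiner's formula, which identifies the conormal of $\operatorname{HS}(\F,X)$ at $X$ with the quadratic differential whose horizontal foliation is $\F$, so that the $C^1$ map $df_X$ transports the shape of the configuration of horospheres through $X$ to that through $f(X)$. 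I would then argue that $\G$ must be indecomposable by transporting the indecomposability of $\F$ across $f$ through this conormal/incidence data; equivalently, were $\G$ decomposable, pulling its decomposition data back by $f^{-1}$ would force $\F$ to be decomposable, contradicting $\F\in\MFind$.

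The main obstacle is twofold. First, establishing the disjointness dichotomy rigorously: the delicate case is that of two distinct classes with the same minimal support, namely distinct ergodic measures on a single non-uniquely-ergodic foliation, where one must still show that the extremal-length level sets meet. Second, and harder, is pinning down the horosphere-intrinsic characterization of indecomposability used in part (2) and verifying that it is preserved by $f$; this is the exact analogue, in the present horospherical setting, of Royden's identification of the smooth points of the cotangent norm, and it is where the real work lies.
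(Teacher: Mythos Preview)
Your key lemma for part~(1), the ``disjointness dichotomy'' asserting that two horospheres of distinct projective classes must always intersect, is false. For any filling pair $\alpha,\beta\in\mathcal{S}$, Minsky's inequality (Theorem~\ref{GM}) gives $\operatorname{Ext}_X(\alpha)\operatorname{Ext}_X(\beta)\geq i(\alpha,\beta)^2$ for every $X\in\T$, so whenever $st<i(\alpha,\beta)^2$ the horospheres $\operatorname{HS}(\alpha,s)$ and $\operatorname{HS}(\beta,t)$ are disjoint; this is precisely the mechanism behind the tangency criterion of Lemma~\ref{condition of tangential}. Hence the step ``the images are disjoint, therefore share the projective class $[\G]$'' collapses, and no amount of care with the non-uniquely-ergodic case will rescue it. Your side remark that indecomposability of $\F$ is needed to make level sets of $\operatorname{Ext}_\F$ connected is also incorrect: by the Gardiner--Masur result quoted after Lemma~\ref{lemma:gr}, every horosphere is homeomorphic to $\mathbb{R}^{6g-7+2n}$ regardless of $\F$.

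The paper replaces disjointness of horospheres by \emph{containment of horoballs}, and this is where indecomposability actually enters. One first checks that $f$ carries horoballs to horoballs (Corollary~\ref{MHTH}). Then for any $Z$, writing $f^{-1}[\operatorname{HS}(\G,f(Z))]=\operatorname{HS}(\F',Z)$ and comparing the two nested $\G$-horoballs yields an inclusion $\operatorname{HB}(\F,X)\subset\operatorname{HB}(\F',Z)$ (or the reverse). The crucial input is Proposition~\ref{Main-Lemma-One}: if $\F\in\MFind$ and $\operatorname{HB}(\F,s)\subset\operatorname{HB}(\G,t)$, then $\G=k\F$. This in turn rests on Lemma~\ref{Estimate of extremal length}, which shows that $\operatorname{Ext}_X(\G)$ is bounded on $\operatorname{HS}(\F,s)$ only when $\G\prec\F$, hence (since $\F$ has a single ergodic piece) only when $\G$ is a multiple of $\F$. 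Part~(2) uses the same machinery rather than conormal data: if $\G$ had a proper indecomposable summand $\G_0$, Corollary~\ref{Relations} gives $\operatorname{HB}(\G,Y)\subset\operatorname{HB}(\G_0,Y_0)$; pulling this inclusion back through $f^{-1}$ and applying Proposition~\ref{Main-Lemma-One} to the indecomposable $\F$ forces the pullback of $\G_0$ to be a multiple of $\F$, whence (via part~(1)) $\operatorname{HS}(\G,Y_0)=\operatorname{HS}(\G_0,Y_0)$, contradicting Corollary~\ref{coro:3.5}. Your sketch for~(2) via Gardiner's formula never produces a horosphere-intrinsic criterion for indecomposability; the criterion that actually works is this horoball-inclusion rigidity.
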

Thus $f$ induces a natural action on $\mathcal{MF}_{ind}$, denoted by $f_*$.
We further show that  $f_*$  preserves the relation of zero intersection
(see Proposition \ref{preserves indecomposable}).
There is a characterization of measured foliations
corresponding to simple closed curves in terms of the dimension of zero intersection subspace.
As a result, we can show that  $f_*$ induces an automorphism of the complex of curves of $S$.
It follows from a result of Ivanov that $f_*$
is given by an element of the extended mapping class group
(see Theorem \ref{Th B}).

To prove Theorem \ref{Result four}, we can reduce to the case that $f[\operatorname{HS}(\mathcal{F}, X)] = \operatorname{HS}(\mathcal{F},Y)$, for all $\mathcal{F}$ corresponding to simple closed curves.
We study the condition when two or three such horospheres are tangent to each other
(see Lemma \ref{condition of tangential} and Lemma \ref{Dense two}),
and use this to
show that $f$ is equal to the identify map on a dense subset of $\mathcal{T}_{g,n}$.
Then the continuity of $f$ implies that $f=\mathrm{id}$.

\bigskip

Using analytic nature of the Teichm\"uller metric, Royden \cite{royden1971automorphisms} (and extended by
Earle and Kra \cite{earle1974isometries}) proved that

\begin{theorem}[Royden]\label{thm:Royden}
If $3g-3+n\geq 2$, then every isometry of $\T$ with respect to the Teichm\"uller metric is induced by an element of
 the extended mapping class group.
\end{theorem}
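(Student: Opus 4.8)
The plan is to deduce Theorem~\ref{thm:Royden} from Theorem~\ref{Result four}: given an isometry $f:\T\to\T$ of the Teichm\"uller metric, I would show that $f$ is a $C^1$-diffeomorphism that preserves horospheres, and then quote Theorem~\ref{Result four}. The regularity is standard: the Teichm\"uller metric is a $C^1$ Finsler metric, and by the Finsler version of the Myers--Steenrod theorem an isometry is automatically a $C^1$ (in fact smooth) diffeomorphism. The substance is therefore to establish that $f$ preserves horospheres, and this is where the advertised relation between horospheres and metric balls enters.

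I would first realize each horosphere $\operatorname{HS}(\F,Y)$ with $\F\in\MFind$ as a limit of Teichm\"uller metric spheres. Let $d$ denote the Teichm\"uller distance and let $r:[0,\infty)\to\T$ be the Teichm\"uller geodesic ray with $r(0)=Y$ whose vertical foliation is $\F$, so that $\mathrm{Ext}_{r(t)}(\F)=e^{-2t}\,\mathrm{Ext}_Y(\F)$. Kerckhoff's formula
\[
 d(Z,r(t))=\tfrac12\log\sup_{\mathcal H\in\MF}\frac{\mathrm{Ext}_Z(\mathcal H)}{\mathrm{Ext}_{r(t)}(\mathcal H)}
\]
applied with $\mathcal H=\F$ gives immediately the lower bound $d(Z,r(t))-t\ge\tfrac12\log\!\big(\mathrm{Ext}_Z(\F)/\mathrm{Ext}_Y(\F)\big)$. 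For the matching upper bound I would use that $\F$ is indecomposable, hence uniquely ergodic: as $t\to\infty$ the surface $r(t)$ degenerates onto $\F$, the quotient $\mathrm{Ext}_{r(t)}(\mathcal H)/\mathrm{Ext}_{r(t)}(\F)$ tends to infinity for every $\mathcal H$ not projectively equal to $\F$, and the supremum in Kerckhoff's formula is asymptotically attained by $\F$ alone. Consequently the Busemann function of the ray is the extremal length horofunction,
\[
 b_{\F,Y}(Z)=\lim_{t\to\infty}\big(d(Z,r(t))-t\big)=\tfrac12\log\frac{\mathrm{Ext}_Z(\F)}{\mathrm{Ext}_Y(\F)},
\]
so its zero level set is exactly $\operatorname{HS}(\F,Y)$; equivalently $\operatorname{HS}(\F,Y)$ is the limit of the metric spheres $\partial B(r(t),t)$, each passing through $Y$.

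Now I would exploit that $f$ is an isometry. It carries metric spheres to metric spheres and geodesic rays to geodesic rays, hence it induces a homeomorphism of the horofunction compactification that sends Busemann points to Busemann points. By the previous step the extremal length horofunctions $\{b_{\F,\cdot}:\F\in\MFind\}$ are Busemann points; using the identification of the horofunction boundary of the Teichm\"uller metric (and the fact that, for indecomposable foliations, these are precisely the Busemann points), $f$ must send each $b_{\F,Y}$ to a horofunction of the same type, say $b_{\G,f(Y)}$ with $\G\in\MFind$. Passing to zero level sets, $f\big(\operatorname{HS}(\F,Y)\big)=\operatorname{HS}(\G,f(Y))$. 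Thus $f$ preserves every horosphere associated with an indecomposable foliation; since $\MFind$ has full measure and horospheres vary continuously with $\F$, taking limits shows that $f$ preserves all horospheres. Theorem~\ref{Result four} then applies and gives that $f$ is induced by an element of the extended mapping class group, which is Royden's Theorem.

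The main obstacle is the identification of the extremal length horofunctions $b_{\F,Y}$ with genuine Busemann functions and, conversely, the fact that these exhaust the Busemann points of the Teichm\"uller metric; this is precisely a computation of the Busemann points of the horofunction boundary, and it is what forces the restriction to $\F\in\MFind$ and the appeal to unique ergodicity in the upper bound above. The case of decomposable foliations is then recovered only at the end, through the density of $\MFind$ and the continuity of the horospheres. A secondary, more routine point is the regularity assertion that an isometry is a $C^1$-diffeomorphism, needed so that Theorem~\ref{Result four} may be invoked.
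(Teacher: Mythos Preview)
Your overall architecture is close to the paper's, but there are three concrete problems, two of which stem from trying to invoke Theorem~\ref{Result four} as a black box rather than adapting its proof.

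First, a factual error: you write ``$\F$ is indecomposable, hence uniquely ergodic.'' This is false. Indecomposable foliations include weighted simple closed curves and ergodic minimal components supported on proper subsurfaces; uniquely ergodic foliations form a strict subclass (see \S\ref{MF}). Your sketched upper bound for the Busemann function, which relies on unique ergodicity, therefore does not cover $\MFind$. The correct route, which the paper takes, is to quote Walsh's explicit formula for the Busemann function (Theorem in \S\ref{sec:walsh}) and observe via Corollary~\ref{coro:ext} that it collapses to $\tfrac12\log\mathrm{Ext}_X(\F)$ precisely when $\F$ has a single ergodic component; no asymptotic analysis of Kerckhoff's supremum is needed.

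Second, both auxiliary steps you introduce to reduce to Theorem~\ref{Result four} are problematic. The appeal to a ``Finsler Myers--Steenrod theorem'' is not safe here: the Teichm\"uller Finsler norm is only $C^1$ (and not $C^2$) on the tangent bundle, and I do not know a Myers--Steenrod statement at that regularity. Your density argument is also incomplete: knowing $f(\operatorname{HS}(\F_n,X))=\operatorname{HS}(\G_n,f(X))$ for $\F_n\in\MFind$ converging to $\F$ does not by itself force the limit set $f(\operatorname{HS}(\F,X))$ to be a horosphere---you must control convergence of the image foliations $\G_n$, which you have not done.

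The paper sidesteps both issues. It never claims the isometry is $C^1$ and never extends beyond $\MFind$; instead it shows directly (Step~2 of \S\ref{sec:Royden}, via Lemma~\ref{lemma:SL} and Proposition~\ref{Main-Lemma-One}) that an isometry sends horospheres in $\MFind$ to horospheres in $\MFind$, and then reruns the \emph{proof} of Theorem~\ref{Result four}, which only ever uses horospheres attached to simple closed curves (all of which lie in $\MFind$). Since $f^{-1}$ is automatically an isometry, the smoothness hypothesis used in Lemma~\ref{Inverse_is_true} is also unnecessary (cf.\ the Remark following it). Your use of the horofunction boundary to argue that Busemann points go to Busemann points is a legitimate alternative to the paper's Step~2, provided you cite the characterization of Busemann points by indecomposable foliations; but the cleaner fix for the rest is to abandon the reduction to Theorem~\ref{Result four} and instead observe that its proof goes through verbatim once $f$ and $f^{-1}$ preserve indecomposable horospheres.
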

Ivanov \cite{ivanov2001isometries} gave an alternative proof of Royden's Theorem,
by investigation of the asymptotic geometry of Teichm\"uller geodesic rays.
In \S \ref{sec:Busemann}, we observe that there is a direct relation between horospheres and level sets of Busemann functions, when the measured foliations defining the horospheres are indecomposable.
Consider any isometry $f$ of $\T$,
we show that $f$ preserves horospheres associated to indecomposable measured foliations.
Again, $f$ induces an isomorphism of $\mathcal{MF}_{ind}$. The proof of Theorem \ref{Result four}
can be adapted to show that $f$ is induced by an element of the extended mapping class group.
Thus we obtain a new proof of Royden's Theorem.

\subsection{Organization of the article}
In \S \ref{sec:preli} we give the preliminaries on Teichm\"uller theory and measured foliations.
The geometry of horospheres is investigated in \S \ref{sec:horo}.
We prove Proposition \ref{Result two} and Theorem \ref{Result four} in \S \ref{sec:diff}.
Theorem \ref{thm:Royden} is proved in \S \ref{sec:Royden}.

\bigskip

\emph{Acknowledgements.}
            The authors are grateful to Lixin Liu and Huiping Pan for their helpful suggestions and discussions. The authors would like to thank the referee for his (or her) corrections and
useful comments.


\section{Preliminaries}\label{sec:preli}

In this section, we briefly recall the background material on Teichm\"uller theory of Riemann surfaces
and measured foliations.

\subsection{Teichm\"uller space}
Let $S$ be a Riemann surface of genus $g$ with $n$ punctures,  with $3g - 3 +n \geq 2$.
The \emph{Teichm\"uller space} $\mathcal{T}_{g,n}$ is the space of equivalence classes of pairs
$(X, f)$, where $f : S \rightarrow X$ is an orientation-preserving diffeomorphism (known as a marking).
The equivalence relation is given by $(X, f)\sim(Y, g)$ if there is a conformal mapping
$\phi : X \rightarrow Y$ so that $g^{-1} \circ \phi \circ f$ is isotopic to the identity map of $S$.

  The Teichm\"uller space $\mathcal{T}_{g,n}$ has
  a complete distance, called the \emph{Teichm\"uller distance}
  $d_{\mathcal{T}}(\cdot, \cdot)$. For any two points
  $[(X, f)], [(Y, g)] \in \mathcal{T}_{g,n}$ the distance is defined by
    $$ d_{\mathcal{T}}([(X, f)], [(Y, g)]) = \frac{1}{2} \inf_{h} \log K(h),$$
  where $h$ ranges over all quasiconformal mappings
  $ h: X \rightarrow Y $ such that $h \circ f$ is
  homotopic to $g$, and $K(h)$ is the maximal quasiconformal
  dilatation of $h$.

  For  simplicity, we shall denote a point in $\mathcal{T}_{g,n}$ by a Riemann surface $X$,
without explicit reference to the marking or to the equivalence relation.

\subsection{Measured foliations}\label{MF}
A \emph{measured foliation} $\mathcal{F}$ on $S$ is a foliation (with a finite number of singularities)
with a transverse invariant measure.
This means that if the local coordinates send the regular leaves of
$\mathcal{F}$ to horizontal arcs in $\mathbb{R}^{2}$,
then the transition functions on $\mathbb{R}^{2}$ are of the form $(f(x, y),  \pm y + c)$
where $c$ is a constant, and the measure is given by $|dy|$.
The allowed singularities of $\mathcal{F}$ are topologically the same as
those that occur at $z = 0$ in the line field of $z^{p-2}dz^{2}, p \geq 3 $
(we allow $p=1$ at the puncture of $S$).
  A leaf of $\mathcal{F}$ is called \emph{critical} if it contains
  a singularity of $\mathcal{F}$. The union of compact critical
  leaves is called the \emph{critical graph}.

  Let $\mathcal{S}$ be the set of free homotopy classes of non-trivial, non-peripheral simple closed curves on $S$. The \emph{intersection number} $i(\gamma, \mathcal{F})$ of a simple closed curve $\gamma$ with a measured foliation $\mathcal{F}$ endowed with transverse measure $\mu$ is defined by
    $$ i(\gamma, \mathcal{F}) = \inf_{\gamma'} \int_{\gamma'}d \mu,$$
  where the infimum is taken over all simple closed curves $\gamma'$ in the isotopy class of $\gamma$.

  Two measured foliations $\mathcal{F}$ and $\mathcal{F}'$ are  \emph{measure equivalent} if, for all $\gamma\in \mathcal{S}$, $i(\gamma, \mathcal{F}) = i(\gamma, \mathcal{F}')$.
  Denote by $ \mathcal{MF} = \mathcal{MF}(S)$ the space of equivalence classes of measured foliations on $S$.

 Two measured foliations $\mathcal{F}$ and $\mathcal{F}'$ are \emph{projectively equivalent} if there is a constant $b > 0$ such that $\mathcal{F}=b \cdot \mathcal{F'}$, i.e.  $i(\gamma, \mathcal{F}) =b \cdot i(\gamma, \mathcal{F}')$ for all $\gamma\in \mathcal{S}$. The space of projective equivalence classes of foliations is denoted by $\mathcal{PMF}$.

  Thurston showed that $\mathcal{MF}$ is homeomorphic to a $6g-6+2n$ dimensional ball and $\mathcal{PMF}$ is homeomorphic to a $6g-7+2n$ dimensional sphere. The set $\mathcal{S}$ is dense in $\PMF$. For more details on measured foliations, see \cite{fathi1979travaux}.

\bigskip

  We will use the ergodic decomposition of a measured foliation later in this paper.
  By removing the critical graph, a measured foliation $\mathcal{F}$ is decomposed into a finite number of
  connected components, each of which is either a cylinder foliated by  closed leaves
  or a minimal component on which every leaf is dense.
  Furthermore, the transverse measure on a minimal component $D$ can be represented  as
  a finite sum of  projectively distinct ergodic measures:
  $$\mu|_D =
  \sum_{k} \mu_{D,k}.$$
  We refer to \cite{Ivanov1992Subgroups, lenzhen2010criteria}
  for more details.

A measured foliation $\mathcal{F}'$ is called an
  \emph{indecomposable component} of $\mathcal{F}$ if it is either one of the
  cylindrical components of $\mathcal{F}$, or it is measure equivalent
  to some minimal component of $\mathcal{F}$ endowed with one of the ergodic measures.  A measured foliation $\mathcal{F}$ is \emph{indecomposable} if it has a unique
  indecomposable component. We denote the set of indecomposable measured foliations on $S$ by $\mathcal{MF}_{ind}$.

  Thus an indecomposable measured foliation is equivalent to
  either a weighted simple close curve or a minimal component on a subsurface with an ergodic measure.
  In particular, uniquely ergodic measured foliations are indecomposable. We recall that
  a measured foliation $\F$ is \emph{uniquely ergodic} if it is minimal and any
topologically equivalent measured foliation is measure equivalent to a multiple of $\F$.

Usually, we will represent a measured foliation $\mathcal{F}$ as a finite sum
    $$ \mathcal{F} = \sum_{i=1}^{k}\mathcal{F}_{i}$$
  of mutually disjoint ($i(\mathcal{F}_{i},\mathcal{F}_{j})=0$) and distinct indecomposable measured foliations.
  In the literature, such a (unique) decomposition is called the  \emph{ergodic decomposition} of
  $\mathcal{F}$ \cite{lenzhen2010criteria}.

  The next lemma will be used later.

\begin{lemma} \cite{Walsh2014The}\label{Walsh2014Lemma6.3}
  Let $\{\mathcal{F}_{i}\}_{i = 0}^{k}$ be a set of projectively distinct,
  indecomposable elements of $\mathcal{MF}$ such that
  $i(\mathcal{F}_{i}, \mathcal{F}_{j}) = 0$ for all $i$ and $j$.
  Then for any $\varepsilon > 0$,
  there exists a simple closed curve $\beta \in \mathcal{S}$ such that
  $$i(\mathcal{F}_{i}, \beta) < i(\mathcal{F}_{0}, \beta) \  \varepsilon, \ \forall \  i\neq 0.$$
\end{lemma}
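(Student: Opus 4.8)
The plan is to reduce the statement to the construction of a single auxiliary measured foliation and then to build that foliation by analysing the supports of the $\mathcal{F}_i$. Since the geometric intersection number is continuous on $\mathcal{MF}\times\mathcal{MF}$ and homogeneous of degree one in each variable, for every $i\neq 0$ the ratio
\[
\rho_i([\mathcal{G}]) \;=\; \frac{i(\mathcal{F}_i,\mathcal{G})}{i(\mathcal{F}_0,\mathcal{G})}
\]
descends to a well-defined continuous function on the open set $U=\{[\mathcal{G}]\in\mathcal{PMF} : i(\mathcal{F}_0,\mathcal{G})>0\}$. Because $\mathcal{S}$ is dense in $\mathcal{PMF}$, it then suffices to exhibit a single class $[\mathcal{G}]\in U$ with $\rho_i([\mathcal{G}])<\varepsilon$ for all $i\neq 0$: any $\beta\in\mathcal{S}$ whose projective class is close enough to $[\mathcal{G}]$ lies in $U$ and still satisfies $\rho_i([\beta])<\varepsilon$ for the finitely many indices, which is exactly the asserted inequality.

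To build such a $\mathcal{G}$ I would analyse the supports. Let $R$ be the subsurface filled by $\mathcal{F}_0$ (a regular neighbourhood of the core curve when $\mathcal{F}_0$ is a weighted simple closed curve); indecomposability means $i(\mathcal{F}_0,\delta)>0$ for every essential $\delta\subset R$. For $i\neq 0$ the hypothesis $i(\mathcal{F}_0,\mathcal{F}_i)=0$, combined with the fact that a filling foliation has positive intersection with everything except a transverse measure on itself, forces a dichotomy: either the support of $\mathcal{F}_i$ is isotopic off $R$, or it equals $R$, in which case $\mathcal{F}_i$ is a transverse measure on the same topological foliation as $\mathcal{F}_0$ and, being projectively distinct, a distinct ergodic component, which I call a sibling. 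I then choose $\mathcal{G}$ supported in an arbitrarily small neighbourhood of $R$ and transverse to $\mathcal{F}_0$ there, for instance the horizontal foliation of a quadratic differential on $R$ whose vertical foliation represents $\mathcal{F}_0$. For every non-sibling this already gives $i(\mathcal{F}_i,\mathcal{G})=0$ for free, so only the siblings remain.

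The main obstacle is precisely the separation of the finitely many siblings, that is, of distinct ergodic measures carried by one minimal foliation $\Phi$ on $R$. These all have full support, so no foliation transverse to $\Phi$ can vanish against a sibling; one must control the ratios quantitatively rather than hope for exact zeros. The ergodic measures $\mu_0,\dots,\mu_r$, with $\mu_0$ representing $\mathcal{F}_0$, are the extreme rays of the finite-dimensional cone $V$ of transverse measures on $\Phi$, hence linearly independent; and for $\mathcal{G}$ transverse to $\Phi$ the map $\mu\mapsto i((\Phi,\mu),\mathcal{G})$ is a linear functional on $V$, strictly positive on $V\setminus\{0\}$. The key point I would need to establish is that, as $\mathcal{G}$ ranges over transverse foliations, these functionals are dense in the interior of the dual cone of $V$, a nondegeneracy statement that should follow from the intersection pairing between transverse measures and transverse foliations of $\Phi$. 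Granting this, I choose the functional close to the boundary ray dual to $\mu_0$, which makes $i((\Phi,\mu_j),\mathcal{G})<\varepsilon\, i((\Phi,\mu_0),\mathcal{G})$ simultaneously for all siblings $j$; realising it by an honest transverse foliation on $R$ and invoking the reduction of the first paragraph yields the curve $\beta$. Proving that distinct ergodic measures can be told apart to arbitrary precision by transverse curves is the technical heart of the argument.
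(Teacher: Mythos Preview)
The paper does not prove this lemma at all: it is quoted verbatim from Walsh \cite{Walsh2014The} with no argument supplied, so there is no ``paper's own proof'' to compare against. What follows is therefore an assessment of your proposal on its own terms.

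Your reduction is correct and useful. Continuity and homogeneity of $i(\cdot,\cdot)$, together with the density of $\mathcal{S}$ in $\mathcal{PMF}$, do reduce the problem to producing a single $[\mathcal{G}]\in\mathcal{PMF}$ with $i(\mathcal{F}_0,\mathcal{G})>0$ and all ratios $\rho_i([\mathcal{G}])<\varepsilon$. The dichotomy between ``non-siblings'' (support isotopic off $R$, or a boundary component of $R$) and ``siblings'' (a distinct ergodic measure on the same minimal $\Phi$) is also correct, and your choice of $\mathcal{G}$ supported in $R$ kills the non-sibling contributions for free.

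The genuine gap is the sibling case, and your proposed resolution is circular. Saying that the functionals $\mu\mapsto i((\Phi,\mu),\mathcal{G})$ are ``dense in the interior of the dual cone of $V$'' is not a method; it is a restatement of exactly what has to be proved, namely that one can make $i((\Phi,\mu_j),\mathcal{G})/i((\Phi,\mu_0),\mathcal{G})$ arbitrarily small simultaneously for all $j\neq 0$. The phrase ``should follow from the intersection pairing'' does not supply the missing idea. What actually drives the argument (and what Walsh uses) is the \emph{mutual singularity} of distinct ergodic transverse measures. By the Lebesgue differentiation theorem applied on a transversal $\tau$, for $\mu_0$-a.e.\ $x\in\tau$ and every $j\neq 0$ one has $\mu_j(I)/\mu_0(I)\to 0$ as the subarc $I\ni x$ shrinks; hence there is a short subarc $I\subset\tau$ with $\mu_j(I)<\varepsilon\,\mu_0(I)$ for all siblings $j$. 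Minimality of $\Phi$ gives a first-return leaf segment over $I$, and closing it along $I$ produces a simple closed curve $\beta\subset R$ with $i(\mathcal{F}_j,\beta)=\mu_j(I)<\varepsilon\,\mu_0(I)=\varepsilon\, i(\mathcal{F}_0,\beta)$. This concrete construction is the content you are missing; once you have it, your reduction finishes the proof.
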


\subsection{Quadratic differentials}
A holomorphic quadratic differential $q$ on $X \in \mathcal{T}_{g,n}$
  is a tensor which is locally represented by $q = q(z)dz^{2}$,
  where $q(z)$
  is a holomorphic function on the local conformal coordinate
  $z$ of $X$. We allow holomorphic quadratic differentials
  to have at most simple poles at the punctures of $X$. Denote
  the vector space of holomorphic quadratic differentials
  on $X$ by  $Q(X)$.

  The cotangent space
  of $\mathcal{T}_{g,n}$ at $X$ can be naturally identified with  $Q(X)$.
   We define the $L^{1}$-norm on $\mathcal{Q}(X)$ by
  $$||q|| = \int_{X}|q|.$$
  Denote by $\mathcal{QT}_{g,n}$ the cotangent bundle of
  $\mathcal{T}_{g,n}$, and let $\mathcal{Q}^{1}\mathcal{T}_{g,n}$ be
  the unit cotangent bundle of $\mathcal{T}_{g,n}$.

  A pair of  measured foliations $\{\mathcal{F}, \mathcal{G}\}$ is \emph{transverse} if
  $i(\F,\gamma)+i(\G,\gamma)>0$
  for all $\gamma\in \mathcal{MF}$.
Any $q \in Q(X)$ gives rise to a pair of transverse measured foliations
$\mathcal{F}_{v}(q)$ and $\mathcal{F}_{h}(q)$ on $X$, called
the \emph{vertical} and \emph{horizontal measured foliations} of $q$, respectively.
The vertical foliation $\mathcal{F}_{v}(q)$ (resp. horizontal foliation
$\mathcal{F}_{h}(q)$) is defined by the foliation of the direction field $q(z)d z^2<0$
(resp. $q(z)d z^2>0$) with the transverse measure $|\textrm{Re}\sqrt{q}|$ (resp.$|\textrm{Im}\sqrt{q}|$).

On the other hand, according to a fundamental result of Hubbard and Masur \cite{hubbard1979quadratic},
for any measured foliation $\mathcal{F}\in \mathcal{MF}$, there is a unique holomorphic quadratic differential
$q \in Q(X)$ such that $\mathcal{F}_{v}(q)$ is measure equivalent to $\mathcal{F}$.
The quadratic differential $q$ is called the \emph{Hubbard-Masur differential} of $\mathcal{F}$.

Let $X=(X,f)\in \mathcal{T}_{g,n}$ and $q \in Q(X)$. For any $t \in \mathbb{R}$,
 consider the normalized solution $f_{t}$ of the Beltrami equation
    \begin{equation*}
      \frac{\partial f}{\partial \overline{z}} = \tanh(t) \frac{|q|}{q} \frac{\partial f}{\partial z}
    \end{equation*}
  on $X$. We obtain a geodesic in the Teichm\"uller space:
    \begin{eqnarray*}
      \mathbf{G}_{q}: \mathbb{R} &\to&  \mathcal{T}_{g,n}, \\
      t &\mapsto&  (X_{t}, f_{t} \circ f).
    \end{eqnarray*}
  where $X_{t} = f_{t}(X)$. We call $\textbf{G}_{q}$ the \emph{Teichm\"uller geodesic associated to $q$}.

  We say a Teichm\"uller geodesic is determined by a pair of transverse measured foliations $\{\mathcal{F}, \mathcal{G}\}$, if it is defined by a holomorphic quadratic differential whose vertical and horizontal foliations are in the projective  classes of $\mathcal{F}$ and $\mathcal{G}$. Any pair of transverse measured foliations determines a unique
  Teichm\"uller geodesic \cite{gardiner1991extremal}.

\subsection{Extremal length}
Extremal length is an important tool in the study of the Teichm\"uller metric.
The notion is due to Ahlfors and Beurling.

  Let $X=(X,f) \in \mathcal{T}_{g,n}$ and $\alpha \in \mathcal{S}$.
  The extremal length $\operatorname{Ext}_{X}(\alpha)$ is defined by
    $$ \operatorname{Ext}_{X}(\alpha) = \sup_{\rho}
       \frac{\ell_{\rho}(f(\alpha))^{2}}{\textrm{Area}(X, \rho)},$$
  where the supremum is taken over all conformal metrics $\rho$
  on $X$ and $\ell_{\rho}(f(\alpha))$
  denotes the geodesic length of $f(\alpha)$ in the metric $\rho$.
  Kerckhoff \cite{kerckhoff1980asymptotic} proved
  that the definition of extremal length extends continuously to $\mathcal{MF}$.
  One can show that
  the extremal length of a measured foliation $\mathcal{F}$ satisfies
  $$\operatorname{Ext}_{X}(\mathcal{F})=\|q\|,$$
  where $q$ is the Hubbard-Masur differential of $\mathcal{F}$.

    The following formula of Kerckhoff \cite{kerckhoff1980asymptotic} is very useful to understand the geometry of Teichm\"uller distance.
\begin{theorem}
  For any $X, Y \in \mathcal{T}_{g,n}$, the Teichm\"uller distance
  between $X$ and $Y$ is given by
    $$ d_{\mathcal{T}}(X, Y) = \frac{1}{2} \log \sup_{\alpha \in
       \mathcal{S}}\frac{\operatorname{Ext}_{X}(\alpha)}
       {\operatorname{Ext}_{Y}(\alpha)}.$$
\end{theorem}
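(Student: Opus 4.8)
The plan is to prove the equivalent statement $\sup_{\alpha\in\mathcal{S}} \operatorname{Ext}_{X}(\alpha)/\operatorname{Ext}_{Y}(\alpha) = K_0$, where $K_0 = e^{2d_{\mathcal{T}}(X,Y)}$ is the minimal dilatation between $X$ and $Y$, by establishing the two inequalities separately. The bound ``$\le$'' is the elementary one and uses only the quasi-invariance of extremal length. Given any quasiconformal $h\colon X\to Y$ compatible with the markings, the defining supremum of $\operatorname{Ext}$ over conformal metrics, together with the fact that a $K(h)$-quasiconformal map distorts the relevant length--area quotient by at most $K(h)$, yields $\operatorname{Ext}_{X}(\alpha)\le K(h)\operatorname{Ext}_{Y}(\alpha)$ for every $\alpha\in\mathcal{S}$. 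Hence $\sup_{\alpha}\operatorname{Ext}_{X}(\alpha)/\operatorname{Ext}_{Y}(\alpha)\le K(h)$, and taking the infimum over all admissible $h$ gives $\sup_{\alpha}\le K_0$.

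For the reverse bound ``$\ge$'', which is the substantive direction, I would use the Teichm\"uller geodesic joining the two points. By Teichm\"uller's theorem there is a holomorphic quadratic differential $q$ on $X$ whose geodesic $\mathbf{G}_{q}$ satisfies $\mathbf{G}_{q}(0)=X$ and $\mathbf{G}_{q}(d)=Y$ with $d=d_{\mathcal{T}}(X,Y)$. In the natural coordinate $z=x+iy$ of $q$ the time-$t$ map is the affine stretch $x+iy\mapsto e^{t}x+ie^{-t}y$, which preserves the area form $|q|$ and carries $q$ to the Hubbard--Masur differential $q_{t}$ of its own vertical foliation on $X_{t}$. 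Comparing transverse measures gives $\mathcal{F}_{v}(q_{t})=e^{t}\mathcal{F}_{v}(q)$, so using $\operatorname{Ext}_{X_{t}}(\mathcal{F}_{v}(q_{t}))=\|q_{t}\|=\|q\|$ and the quadratic homogeneity $\operatorname{Ext}_{X_t}(c\,\mathcal{F})=c^{2}\operatorname{Ext}_{X_t}(\mathcal{F})$ I obtain
\[
\operatorname{Ext}_{X_{t}}(\mathcal{F}_{v}(q)) = e^{-2t}\|q\|.
\]
Since $\operatorname{Ext}_{X}(\mathcal{F}_{v}(q))=\|q\|$, evaluating at $t=d$ gives the ratio $\operatorname{Ext}_{X}(\mathcal{F}_{v}(q))/\operatorname{Ext}_{Y}(\mathcal{F}_{v}(q))=e^{2d}=K_0$.

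It remains to pass from the foliation $\mathcal{F}_{v}(q)$, which need not be a simple closed curve, to the curves in $\mathcal{S}$ over which the supremum runs. The ratio $\operatorname{Ext}_{X}(\cdot)/\operatorname{Ext}_{Y}(\cdot)$ is invariant under scaling the transverse measure, so it descends to a continuous function on $\PMF$; invoking the density of $\mathcal{S}$ in $\PMF$ and Kerckhoff's continuous extension of extremal length to $\MF$, I would pick $\alpha_{j}\in\mathcal{S}$ with $[\alpha_{j}]\to[\mathcal{F}_{v}(q)]$ and conclude $\operatorname{Ext}_{X}(\alpha_{j})/\operatorname{Ext}_{Y}(\alpha_{j})\to K_0$. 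This forces $\sup_{\alpha}\ge K_0$, which combined with the first step gives equality.

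The main obstacle is the lower bound: it depends on Teichm\"uller's existence and uniqueness theorem to realize $Y$ as $\mathbf{G}_{q}(d)$, and on recognizing that the vertical foliation of the extremal differential is precisely the configuration saturating $K_0$. The final approximation also requires the continuity of $\operatorname{Ext}$ on all of $\MF$, not merely convergence of intersection numbers, so that the extremal lengths of the approximating curves genuinely converge to those of $\mathcal{F}_{v}(q)$.
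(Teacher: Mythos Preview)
The paper does not prove this theorem; it simply attributes the formula to Kerckhoff and cites \cite{kerckhoff1980asymptotic}. Your proposal reproduces what is essentially Kerckhoff's own argument and is correct: the quasi-invariance of extremal length under $K$-quasiconformal maps gives the easy inequality, while Teichm\"uller's existence theorem supplies a quadratic differential whose vertical foliation realizes the ratio $K_0$, and density of $\mathcal{S}$ in $\PMF$ together with the continuity of $\operatorname{Ext}$ on $\MF$ transfers this to simple closed curves. Nothing is missing; the only point worth flagging is that the ``quasi-invariance'' step ($\operatorname{Ext}_{X}(\alpha)\le K(h)\operatorname{Ext}_{Y}(\alpha)$) is itself a small lemma about how $K$-quasiconformal maps distort moduli of curve families, so in a self-contained write-up you would either prove it or cite it.
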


The following inequality is due to Minsky \cite{minsky1993teichmuller},
see also Gardiner-Masur \cite{gardiner1991extremal}.

\begin{theorem}[Minsky]\label{Minsky}
  Let $\{\mathcal{F}, \mathcal{G}\}\in \mathcal{MF}$ be a pair of transverse measured foliations.
  Then for any $X\in \mathcal{T}_{g,n}$, we have
    $$ i(\mathcal{F}, \mathcal{G})^{2} \leq \operatorname{Ext}_{X}(\mathcal{F})\operatorname{Ext}_{X}(\mathcal{G}).$$
  Moreover, the equality is obtained if and only if $X$ belongs to the
 unique Teichm\"uller geodesic determined by $\mathcal{F}$ and $\mathcal{G}$ (i.e., the horizontal and vertical foliations are in the projective  classes of $\mathcal{F}$ and $\mathcal{G}$).
\end{theorem}

\begin{corollary}\label{coro:ext}
For any $X\in\T$ and $\F\in\MF$, we have
  $$\operatorname{Ext}_{X}(\F)=\sup_{\gamma\in\mathcal{S}} \frac{i(\F, \gamma)^2}{\operatorname{Ext}_{X}(\gamma)}.$$
  \end{corollary}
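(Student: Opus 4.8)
The plan is to prove the claimed identity by establishing the two opposite inequalities, using Minsky's inequality (Theorem~\ref{GM}) for the upper bound and its equality case for the lower bound. First I would prove $\sup_{\gamma\in\mathcal{S}}\frac{i(\F,\gamma)^2}{\operatorname{Ext}_X(\gamma)}\le \operatorname{Ext}_X(\F)$. Fixing any $\gamma\in\mathcal{S}$ and regarding it as a measured foliation, Theorem~\ref{GM} applied to the pair $\{\F,\gamma\}$ gives
\[
i(\F,\gamma)^2 \le \operatorname{Ext}_X(\F)\,\operatorname{Ext}_X(\gamma).
\]
Since $\operatorname{Ext}_X(\gamma)>0$, dividing and taking the supremum over $\gamma$ yields the desired bound. (The inequality in Theorem~\ref{GM} holds for every pair of measured foliations; the transversality hypothesis is only needed to characterize the equality case, so there is no restriction on $\gamma$ here.)

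For the reverse inequality I would produce test curves that asymptotically realize the supremum, and the correct candidate is dictated by the equality case. Let $q\in Q(X)$ be the Hubbard--Masur differential of $\F$, so that $\operatorname{Ext}_X(\F)=\|q\|$ and $\F_v(q)$ is measure equivalent to $\F$. Put $\G:=\F_h(q)$. The vertical and horizontal foliations of $q$ are transverse, and $X=\mathbf{G}_q(0)$ lies on the Teichm\"uller geodesic $\mathbf{G}_q$ determined by $\F$ and $\G$; hence the equality case of Theorem~\ref{GM} applies and gives
\[
\frac{i(\F,\G)^2}{\operatorname{Ext}_X(\G)} = \operatorname{Ext}_X(\F).
\]
(Concretely, $i(\F_v(q),\F_h(q))=\|q\|=\operatorname{Ext}_X(\F_h(q))$, which recovers the same value.)

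It remains to descend from $\G$ to genuine simple closed curves. Since $\mathcal{S}$ is dense in $\PMF$, I would choose $\gamma_n\in\mathcal{S}$ and scalars $c_n>0$ with $c_n\gamma_n\to\G$ in $\MF$. Because $i(\F,c_n\gamma_n)=c_n\,i(\F,\gamma_n)$ and $\operatorname{Ext}_X(c_n\gamma_n)=c_n^2\,\operatorname{Ext}_X(\gamma_n)$, the scalar $c_n$ cancels in the ratio, so the continuity of $i(\F,\cdot)$ and of $\operatorname{Ext}_X(\cdot)$ on $\MF$ (Kerckhoff) gives
\[
\frac{i(\F,\gamma_n)^2}{\operatorname{Ext}_X(\gamma_n)} = \frac{i(\F,c_n\gamma_n)^2}{\operatorname{Ext}_X(c_n\gamma_n)} \longrightarrow \frac{i(\F,\G)^2}{\operatorname{Ext}_X(\G)} = \operatorname{Ext}_X(\F).
\]
Thus $\sup_{\gamma}\frac{i(\F,\gamma)^2}{\operatorname{Ext}_X(\gamma)}\ge \operatorname{Ext}_X(\F)$, and together with the upper bound this proves the identity.

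I expect the main obstacle to be the lower bound, specifically the passage from the foliation $\G$ realizing equality to a sequence of simple closed curves. The essential points to get right are the scale-invariance of the ratio $i(\F,\cdot)^2/\operatorname{Ext}_X(\cdot)$ (which lets mere projective convergence in $\PMF$ suffice), the fact that $\operatorname{Ext}_X(\G)>0$ keeps the denominators bounded away from zero along the approximating sequence, and the continuity of extremal length on $\MF$; the upper bound, by contrast, is a direct application of Minsky's inequality.
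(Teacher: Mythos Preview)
Your proof is correct and follows essentially the same approach as the paper: Minsky's inequality for the upper bound, the horizontal foliation $\G=\F_h(q)$ of the Hubbard--Masur differential for the equality case, and density of weighted simple closed curves in $\MF$ to pass from $\G$ to genuine curves. The paper's version is simply more terse at the approximation step, whereas you spell out the scale-invariance and continuity explicitly.
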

  \begin{proof}
By Minsky's inequality,
$$\operatorname{Ext}_{X}(\F)\geq \sup_{\gamma\in\mathcal{S}} \frac{i(\F, \gamma)^2}{\operatorname{Ext}_{X}(\gamma)}.$$
On the other hand, let $q$ be the Hubbard-Masur differential of $\F$, and let $\G$ be the horizontal measured foliation of
$q$. Then
$$\operatorname{Ext}_{X}(\F)= \frac{i(\F, \G)^2}{\operatorname{Ext}_{X}(\G)}.$$
By the density of weighted simple closed curves in $\MF$, we are done.
\end{proof}

The following first variational formula of extremal length is called Gardiner's formula.

\begin{theorem}\cite{Gardiner,Gardiner2}\label{thm:Gardiner}
Let $\mu=\mu(z)\frac{d \bar z}{dz}$ be a Beltrami differential on $X$ that represents a tangent vector of
$\mathcal{T}(S)$ at $X$. Then for any measured foliation $\F\in \MF$,
$$ d \operatorname{Ext}_{X}(\mathcal{F})[\mu] = 2 \operatorname{Re} \iint_{X} \mu(z) q(z) \  dxdy,$$
 where $q$ is the Hubbard-Masur differential of $\mathcal{F}$.
\end{theorem}

\subsection{Complex of curves and mapping class group}\label{CCaMCG}
  The \textit{complex of curves}  was introduced into the study of Teichm\"uller spaces by Harvey \cite{harvey1981boundary},
   as an analogue of the Tits building of a symmetric space.
  The vertex set of the complex of curves $\mathcal{C}(S)$ is given by
   $\mathcal{S}$.
   Two vertices $\alpha, \beta \in \mathcal{S}$ are connected by an edge if they have disjoint representations.
   For any two vertices $\alpha, \beta$,
    we define the distance $d_{\mathcal{S}}(\alpha, \beta)$ to be the minimal number of edges connecting $\alpha$ and $\beta$.

  The \emph{mapping class group} $\mathrm{Mod}(S)$ is the group of  homotopy classes
of orientation-preserving diffeomorphisms  $\sigma: S\to S$. Every mapping class
$[\sigma]$ acts on $\mathcal{T}_{g,n}$ by changing the markings:
$$[(X,f)] \to [(X,f\circ \sigma^{-1})].$$
Denote by $\mathrm{Mod}^{\pm}(S)$ the extended mapping class group, which contains $\operatorname{Mod}(S)$ as a subgroup of index two.

 It is clear that $\mathrm{Mod}^{\pm}(S)$ acts on $\mathcal{C}(S)$ as a group of automorphisms.

\begin{theorem}\label{Th B}
  If $S$ is not a sphere with $\leq 4$ punctures, nor a torus with
$\leq 2$ punctures, then every automorphism of $\mathcal{C}(S)$ is given by an element of
$\mathrm{Mod}^{\pm}(S)$.
\end{theorem}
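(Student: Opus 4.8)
The plan is to prove the statement by induction on the complexity $\xi = 3g-3+n$, reconstructing a homeomorphism of $S$ from the combinatorial data that an automorphism $\phi$ of $\mathcal{C}(S)$ must preserve. The cases of small complexity — which force $S$ to be a sphere with few punctures or a low-genus torus — are exactly those excluded in the hypothesis, and they have to be handled separately as base cases; this is where the theorem genuinely fails or requires a different argument (as in the extensions due to Korkmaz and Luo), so the induction can only be run above the exceptional range.

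First I would show that $\phi$ preserves the \emph{topological type} of each vertex, i.e. that it sends a nonseparating curve to a nonseparating curve and a separating curve bounding a subsurface of type $(h,m)$ to one of the same type. The tool here is that the link of a vertex $\alpha$ in $\mathcal{C}(S)$ — the subcomplex spanned by all curves disjoint from $\alpha$ — can be identified with the curve complex $\mathcal{C}(S_\alpha)$ of the (possibly disconnected) surface $S_\alpha$ obtained by cutting $S$ along $\alpha$. Since $\phi$ restricts to an isomorphism between links, and since the isomorphism type of $\mathcal{C}(S_\alpha)$ (the number of connected factors and their individual genera and puncture counts) is determined by the topological type of $\alpha$, comparing links across vertices forces $\phi$ to respect topological type. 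Applying the inductive hypothesis to each connected factor then shows that $\phi$ acts on every link as an element of the mapping class group of $S_\alpha$.

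Next I would use this to control the action on maximal simplices. A maximal simplex of $\mathcal{C}(S)$ is a pants decomposition, a family of $\xi$ disjoint curves cutting $S$ into thrice-punctured spheres, and $\phi$ permutes these. The key combinatorial step is to recognize, purely in terms of $\mathcal{C}(S)$, the \emph{elementary moves} between pants decompositions — replacing a single curve by another that meets it minimally (in one or two points) while fixing all the rest — so that $\phi$ preserves adjacency in the associated pants graph. From the compatible family of mapping-class-group elements furnished by the inductive hypothesis on the links, together with the transitive, move-respecting action on pants decompositions, I would assemble a single homeomorphism $h:S\to S$ inducing $\phi$: one fixes $h$ on the pieces of a chosen pants decomposition and propagates it across elementary moves, verifying that the local pieces glue consistently around the graph.

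The main obstacle is the first step: extracting genuinely topological information from a complex whose only recorded relation is \emph{disjointness}. The delicate part is to separate nonseparating from separating curves, and to tell apart separating curves of different types, using nothing but the combinatorics of links, and then to recover the intersection-one configurations that are invisible to the edge relation yet indispensable for defining elementary moves. This is precisely where the low-complexity exceptions enter, since there the links degenerate — to discrete sets or to Farey graphs — and no longer remember enough of the surface to pin down $\phi$.
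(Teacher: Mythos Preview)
The paper does not prove this theorem at all: Theorem~\ref{Th B} is quoted as a known result and attributed to Ivanov \cite{ivanov1997automorphisms} (genus $\geq 2$) and Korkmaz \cite{korkmaz1997complexes} (remaining cases), with no argument given. It functions purely as a black box in the proof of Theorem~\ref{Result four}. So there is no ``paper's own proof'' to compare your proposal against.

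Your sketch is a fair outline of the strategy that Ivanov's original argument actually follows, and you have correctly located the hard part: showing, from the combinatorics of links alone, that $\phi$ preserves the topological type of curves, and recovering minimal-intersection data that the disjointness relation does not record directly. A few cautions if you intend to flesh this into a proof. First, the inductive hypothesis cannot be invoked quite as freely as you suggest: cutting along a curve drops complexity by one, so the factors of $S_\alpha$ may land exactly in the excluded low-complexity range, and you must handle those base cases by hand rather than by appeal to the theorem itself. Second, the isomorphism type of the link $\mathcal{C}(S_\alpha)$ does \emph{not} by itself determine the topological type of $\alpha$ in all cases (there are coincidences among curve complexes of distinct surfaces, e.g.\ $\mathcal{C}(S_{1,2})\cong\mathcal{C}(S_{0,5})$, as the paper itself notes), so the type-preservation step requires more than a comparison of abstract link types; one has to use how the link sits inside $\mathcal{C}(S)$. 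Third, the ``assembly'' of a global homeomorphism from the local data on pants pieces needs a coherence argument to ensure the pieces glue compatibly, and this is where much of the actual work in Ivanov's paper lies.

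In short: your plan is in the right spirit and names the right obstacles, but the paper you are working from simply imports the result, so no comparison is possible.
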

The above theorem is proved by Ivanov \cite{ivanov1997automorphisms} for surfaces of genus $g\geq 2$ and by Korkmaz \cite{korkmaz1997complexes} for
$g\geq 1$. See also Luo \cite{Luo}. We remark that,
for the torus with two punctures, $\mathcal{C}(S_{1,2})$ is isomorphic to $\mathcal{C}(S_{0,5})$. Instead of $\mathrm{Mod}^{\pm}(S_{1,2})$,
the automorphism group of  $\mathcal{C}(S_{1,2})$
is $\mathrm{Mod}^{\pm}(S_{0,5})$,

\section{The geometry of horospheres}\label{sec:horo}
In this section, we study the geometry of horospheres in $\mathcal{T}_{g,n}$.
Although the Teichm\"uller metric is neither non-positively curved nor $\delta$-hyperbolic,
we will show that the asymptotic geometry of horospheres,
for those associated with indecomposable measured foliations,
have similar properties to those in the hyperbolic
space.

\subsection{Horoballs and horospheres}\label{HaH}
 Given $\mathcal{F} \in \mathcal{MF}$, the extremal length function $X \mapsto \operatorname{Ext}_{X}(\mathcal{F})$
   is a $C^1$-function on $\mathcal{T}_{g,n}$.

\begin{definition}\label{def:3.1}
Let $\mathcal{F} \in \mathcal{MF}$ and $s\in \mathbb{R}_+$.
The open horoball associated to $\mathcal{F}$ is defined by
  $$\operatorname{HB}(\mathcal{F}, s) = \{X \in \mathcal{T}_{g,n}\ |\ \operatorname{Ext}_{X}(\mathcal{F}) < s\}.$$
The associated closed horoball is defined as
$$\overline{\operatorname{HB}}(\mathcal{F}, s) = \{X \in \mathcal{T}_{g,n}\ |\ \operatorname{Ext}_{X}(\mathcal{F}) \leq s\}.$$
 The associated  horosphere is defined as
  $$\operatorname{HS}(\mathcal{F}, s) = \{ X \in \mathcal{T}_{g,n}\ |\ \operatorname{Ext}_{X}(\mathcal{F}) = s\}.$$
\end{definition}

\begin{remark}\label{coincide}
  Let $\mathcal{F} \in \mathcal{MF}$ and $k, s \in \mathbb{R}_{+}$.  Since $ \operatorname{Ext}_{X}(k\cdot\mathcal{F}) = k^{2}\operatorname{Ext}_{X}(\mathcal{F}),$
  we have
    $$\operatorname{HS}(k\cdot\mathcal{F}, k^{2}s) = \operatorname{HS}(\mathcal{F}, s).$$

\end{remark}

\begin{lemma}
  For any $X, Y \in \mathcal{T}_{g,n}$, there exists a measured foliation $\mathcal{F} \in \mathcal{MF}$ and $t \in \mathbb{R}_{+}$ such that $X, Y \in \operatorname{HS}(\mathcal{F}, t)$.
\end{lemma}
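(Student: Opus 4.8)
The plan is to produce a single measured foliation $\F\in\MF$ on which $X$ and $Y$ have equal extremal length; then both points lie on $\operatorname{HS}(\F,t)$ with $t=\operatorname{Ext}_X(\F)=\operatorname{Ext}_Y(\F)$. If $X=Y$ any foliation works, so I assume $X\neq Y$. The central object is the ratio
$$
R(\F)=\frac{\operatorname{Ext}_X(\F)}{\operatorname{Ext}_Y(\F)}.
$$
Since the extremal length of a nonzero measured foliation is strictly positive and, by Kerckhoff's continuous extension, depends continuously on $\F\in\MF$, and since $\operatorname{Ext}_X(k\cdot\F)=k^{2}\operatorname{Ext}_X(\F)$ makes $R$ invariant under scaling, $R$ descends to a continuous function $R\colon\PMF\to\mathbb{R}_{+}$.

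Next I would locate projective classes where $R>1$ and where $R<1$. By Kerckhoff's distance formula, $\sup_{\alpha\in\mathcal{S}}R(\alpha)=e^{2d_{\mathcal{T}}(X,Y)}$. Because weighted simple closed curves are dense in $\MF$ and $R$ is continuous and scale-invariant, this same supremum is realized over all of $\PMF$; as $\PMF$ is compact, it is a genuine maximum. Since $X\neq Y$ gives $d_{\mathcal{T}}(X,Y)>0$, this maximum exceeds $1$, producing a class with $R>1$. Running the identical argument with the roles of $X$ and $Y$ interchanged shows that the minimum of $R$ equals $e^{-2d_{\mathcal{T}}(X,Y)}<1$, producing a class with $R<1$.

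Finally, $\PMF$ is a sphere of dimension $6g-7+2n\geq 3$ (as $3g-3+n\geq 2$), hence connected. A continuous real-valued function on a connected space that assumes a value above $1$ and a value below $1$ must assume the value $1$, by the intermediate value theorem. Thus there exists $\F\in\MF$ with $R(\F)=1$, that is, $\operatorname{Ext}_X(\F)=\operatorname{Ext}_Y(\F)=:t>0$, and therefore $X,Y\in\operatorname{HS}(\F,t)$. The only point that genuinely requires care is the passage from the supremum over $\mathcal{S}$ in Kerckhoff's formula to a maximum over the compact $\PMF$; this rests on the continuity of extremal length together with the density of $\mathcal{S}$ in $\PMF$, while the remainder is soft topology.
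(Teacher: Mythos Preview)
Your proof is correct and follows essentially the same strategy as the paper: apply the intermediate value theorem on the connected space of (projective) measured foliations to a continuous function comparing $\operatorname{Ext}_X$ and $\operatorname{Ext}_Y$. The only difference is in establishing the sign change---the paper takes the vertical and horizontal foliations of the Teichm\"uller map from $X$ to $Y$ directly (where the difference $\operatorname{Ext}_X-\operatorname{Ext}_Y$ is visibly positive, respectively negative), whereas you go through Kerckhoff's formula and compactness of $\PMF$; both work, though the paper's route is slightly more direct.
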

\begin{proof}
  It suffices to prove that there is a measured foliation $\mathcal{F} \in \mathcal{MF}$ such that $\operatorname{Ext}_{X}(\mathcal{F}) = \operatorname{Ext}_{Y}(\mathcal{F})$.
  Let $\Phi : \mathcal{MF} \rightarrow \mathbb{R}$ defined by
  $$ \Phi(\mathcal{F}) = \operatorname{Ext}_{X}(\mathcal{F}) - \operatorname{Ext}_{Y}(\mathcal{F}).$$
  It is obvious that $\Phi$ is continuous.
  Let $f : X \rightarrow Y$ be the Teichm\"uller map from $X$ to $Y$, and $q \in Q(X)$ be the holomorphic quadratic differential associated to $f$. Denote by $\mathcal{F}_{h}(q)$ (resp. $\mathcal{F}_{v}(q))$ the horizonal foliation (resp. vertical foliation) of $q$. Then we have $ \Phi(\mathcal{F}_{h}(q))<0$ and $ \Phi(\mathcal{F}_{v}(q))>0$.
  Since $\mathcal{MF}\setminus \{0\}$ is connected, the mean value theorem of continuous function implies that there is a measured foliation $\mathcal{F}^{'}$ such that $\Phi(\mathcal{F}^{'}) = 0$.
\end{proof}

\begin{remark}
  In general, the horosphere containing $X, Y \in \mathcal{T}_{g,n}$ is not unique.
\end{remark}

\bigskip

  Let $\mathcal{F} \in \mathcal{MF}$ and $X \in \mathcal{T}_{g,n}$, we denote by
    $$ \operatorname{HS}(\mathcal{F}, X) = \{Y \in \mathcal{T}_{g,n}\ |\ \operatorname{Ext}_{Y}(\mathcal{F}) = \operatorname{Ext}_{X}(\mathcal{F})\}.$$
Let $T_{X} = T_{X}\mathcal{T}_{g,n}$ denote the tangent space of $\mathcal{T}_{g,n}$ at $X$,
   and let $\mathrm{Gr}(T_{X})$ denote the set of linear subspaces of $T_{X}$ of dimension $6g - 7 + 2n$.

  With the above notation, we define a map $ T : \mathcal{MF} \rightarrow \mathrm{Gr}(T_{X})$
  by
    $$ T(\mathcal{F}) = T_{X}\operatorname{HS}(\mathcal{F}, X),$$
  where $T_{X}\operatorname{HS}(\mathcal{F}, X)$ denotes the tangent space of $\operatorname{HS}(\mathcal{F}, X)$ at $X$.

\begin{lemma}\label{necessary condition of tangent}
  Let $\mathcal{F}, \mathcal{G} \in \mathcal{MF}$. Then $T(\mathcal{F}) = T(\mathcal{G})$ if and only if $\mathcal{G}$ is projectively equivalent to $\mathcal{F}_{h}(q)$ or $\mathcal{F}_{v}(q)$, where $q \in Q(X)$ is the Hubbard-Masur differential of  $\mathcal{F}$.
\end{lemma}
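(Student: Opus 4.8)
The plan is to identify the tangent hyperplane $T(\mathcal{F})$ with the kernel of the differential of the extremal length function, and then reduce the lemma to an elementary statement about linear functionals together with the effect of negating a quadratic differential on its foliations. By definition $T(\mathcal{F})=T_X\operatorname{HS}(\mathcal{F},X)$ is the tangent space to a level set of the function $Y\mapsto \operatorname{Ext}_Y(\mathcal{F})$, so the whole statement concerns how this differential depends on $\mathcal{F}$.

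First I would invoke Gardiner's variational formula for extremal length (see \cite{gardiner1991extremal}). If $q\in Q(X)$ is the Hubbard--Masur differential of $\mathcal{F}$, then for a Beltrami differential $\mu$ representing a tangent vector at $X$ one has
$$ d\big(\operatorname{Ext}_\bullet(\mathcal{F})\big)_X(\mu) = -2\,\operatorname{Re}\int_X \mu\, q,$$
the precise constant being immaterial for what follows. Under the standard identification of the cotangent space of $\mathcal{T}_{g,n}$ at $X$ with $Q(X)$ via the non-degenerate real pairing $\langle \mu,\phi\rangle=\operatorname{Re}\int_X \mu\phi$, this says exactly that the differential of $\operatorname{Ext}_\bullet(\mathcal{F})$ at $X$ is a nonzero real multiple of $q$ itself. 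Since $\mathcal{F}\neq 0$ forces $q\neq 0$, the differential never vanishes; hence $\operatorname{HS}(\mathcal{F},X)$ is a smooth hypersurface near $X$ and
$$ T(\mathcal{F}) = \big\{\mu : \operatorname{Re}\!\int_X \mu\, q = 0\big\}$$
is a real hyperplane of dimension $6g-7+2n$, consistent with $T$ landing in $\mathrm{Gr}(T_X)$.

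The core step is then purely linear. Writing $q$ and $q'$ for the Hubbard--Masur differentials of $\mathcal{F}$ and $\mathcal{G}$, the equality $T(\mathcal{F})=T(\mathcal{G})$ means the functionals $\langle\cdot,q\rangle$ and $\langle\cdot,q'\rangle$ have the same kernel. Because the pairing is non-degenerate, $\phi\mapsto\langle\cdot,\phi\rangle$ is a real-linear isomorphism $Q(X)\to T_X^*$, and two nonzero functionals on a real vector space share a kernel precisely when one is a real scalar multiple of the other. Thus $T(\mathcal{F})=T(\mathcal{G})$ if and only if $q'=c\,q$ for some $c\in\mathbb{R}\setminus\{0\}$. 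It remains to translate this scalar relation into the language of foliations, which is where the two alternatives appear. Since $\mathcal{G}=\mathcal{F}_v(q')$, writing $\sqrt{q'}=\sqrt{c}\,\sqrt{q}$ gives: if $c>0$ then $\operatorname{Re}\sqrt{q'}=\sqrt{c}\,\operatorname{Re}\sqrt{q}$, so $\mathcal{G}$ is projectively equivalent to $\mathcal{F}_v(q)$; if $c<0$ then $\sqrt{q'}=i\sqrt{|c|}\,\sqrt{q}$, so $\operatorname{Re}\sqrt{q'}=-\sqrt{|c|}\,\operatorname{Im}\sqrt{q}$ and $\mathcal{G}$ is projectively equivalent to $\mathcal{F}_h(q)$. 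Equivalently, multiplying $q$ by $-1$ interchanges its horizontal and vertical foliations. The converse is immediate: if $\mathcal{G}$ is projectively equivalent to $\mathcal{F}_v(q)$ (resp. $\mathcal{F}_h(q)$), its Hubbard--Masur differential is a positive (resp. negative) multiple of $q$, so the functionals share a kernel and $T(\mathcal{F})=T(\mathcal{G})$.

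I expect the main obstacle to lie in setting up the differential of extremal length with the correct normalization and in making the cotangent identification $T_X^*\cong Q(X)$ precise, so that "same kernel'' rigorously forces "real scalar multiple.'' Once the differential is pinned down as the Hubbard--Masur differential $q$, the remainder is the elementary kernel-equals-scalar-multiple argument combined with the standard fact that negating a quadratic differential swaps its two foliations.
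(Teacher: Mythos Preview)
Your proposal is correct and follows essentially the same approach as the paper: both identify $T(\mathcal{F})$ with the kernel of $d\operatorname{Ext}_X(\mathcal{F})$, invoke Gardiner's variational formula to express this differential as the real pairing with the Hubbard--Masur differential $q$, use the elementary linear-algebra fact that two nonzero functionals with the same kernel are real scalar multiples to conclude $q'=cq$, and then read off the two cases from the sign of $c$. The only cosmetic difference is that the paper phrases the scalar-multiple step by picking a vector outside the common hyperplane, whereas you invoke the non-degeneracy of the pairing directly; these are equivalent.
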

\begin{proof} Assume that $T(\mathcal{F}) = T(\mathcal{G})$.
  According to the definition of tangent space and the $C^{1}$-property of extremal length function $\operatorname{Ext}_{X}(\mathcal{F})$, we have
  \begin{equation*}
    d \operatorname{Ext}_{X}(\mathcal{F})[\mu] = d \operatorname{Ext}_{X}(\mathcal{G})[\mu] = 0, \  \forall \  \mu \in T_{X}\operatorname{HS}(\mathcal{F},X).
  \end{equation*}
   Moreover, since the subspace of $T_X$ tangent to the horosphere has codimension one,
    there is a non-zero constant  $k \in \mathbb{R}$ such that
  \begin{equation*}
    d \operatorname{Ext}_{X}(\mathcal{F})[\nu] = k \cdot  d \operatorname{Ext}_{X}(\mathcal{G})[\nu] \neq 0 , \  \forall \ \nu \in T_{X} \setminus T_{X}\operatorname{HS}(\mathcal{F}, X).
  \end{equation*}
It follows that $d \operatorname{Ext}_{X}(\mathcal{F})$ is a real multiple of $d \operatorname{Ext}_{X}(\mathcal{G})$.

Let $q_{1}$ and $q_{2}$ be the holomorphic quadratic differentials on $X$ which realize the measured foliations $\mathcal{F}$ and $\mathcal{G}$, respectively. Using Gardiner's formula (Theorem \ref{thm:Gardiner}), we have
 \begin{eqnarray*}
 d \operatorname{Ext}_{X}(\mathcal{F})[\mu] &=& 2 \operatorname{Re} \iint_{X} \mu(z) q_1(z)dxdy \\
 &=& k \cdot \left( 2 \operatorname{Re} \iint_{X} \mu(z) q_2(z)dxdy\right),  \ \forall \  \mu \in T_X.
 \end{eqnarray*}
 This implies that $q_{1} = k q_{2}$. As a result, if $k > 0$, then $\mathcal{G}$ is projectively equivalent to $\mathcal{F}$; if $k < 0$, then $\mathcal{G}$ is projectively equivalent to $\mathcal{F}_{h}(q_{1})$.

 To prove  $T(\mathcal{F}) = T(\mathcal{G})$ under the assumption that $\mathcal{G}$ is projectively equivalent to $\mathcal{F}_{h}(q)$ or $\mathcal{F}_{v}(q)$, we can apply the above argument in the converse direction.
\end{proof}

\begin{corollary}\label{coro:3.5}
   If $\mathcal{G}$ is not projectively equivalent to $\mathcal{F}$, then $\operatorname{HS}(\mathcal{G}, X) \neq \operatorname{HS}(\mathcal{F}, X)$.
\end{corollary}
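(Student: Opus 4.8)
The plan is to derive the corollary directly from Lemma \ref{necessary condition of tangent} by contraposition. Suppose, for the sake of contradiction, that $\operatorname{HS}(\mathcal{G}, X) = \operatorname{HS}(\mathcal{F}, X)$. Since both horospheres are level sets of the $C^1$ extremal length functions $\operatorname{Ext}_{X}(\mathcal{G})$ and $\operatorname{Ext}_{X}(\mathcal{F})$ passing through the common point $X$, the two hypersurfaces coincide as subsets of $\mathcal{T}_{g,n}$. In particular their tangent spaces at $X$ agree, that is, $T(\mathcal{F}) = T_{X}\operatorname{HS}(\mathcal{F}, X) = T_{X}\operatorname{HS}(\mathcal{G}, X) = T(\mathcal{G})$.

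Now I would invoke Lemma \ref{necessary condition of tangent}: the equality $T(\mathcal{F}) = T(\mathcal{G})$ forces $\mathcal{G}$ to be projectively equivalent to either $\mathcal{F}_{h}(q)$ or $\mathcal{F}_{v}(q)$, where $q$ is the Hubbard-Masur differential of $\mathcal{F}$. Recall that $\mathcal{F}$ itself is projectively equivalent to $\mathcal{F}_{v}(q)$, since $q$ is precisely the quadratic differential whose vertical foliation realizes $\mathcal{F}$. By hypothesis $\mathcal{G}$ is \emph{not} projectively equivalent to $\mathcal{F}$, hence not to $\mathcal{F}_{v}(q)$; therefore the only remaining possibility is that $\mathcal{G}$ is projectively equivalent to the horizontal foliation $\mathcal{F}_{h}(q)$.

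At this point the two foliations $\mathcal{F}$ (i.e.\ $\mathcal{F}_{v}(q)$) and $\mathcal{G}$ (i.e.\ $\mathcal{F}_{h}(q)$) are transverse, and I would use Minsky's inequality (Theorem \ref{GM}) to contradict the assumed coincidence of horospheres away from the single point $X$. Concretely, the level sets of $\operatorname{Ext}_{\cdot}(\mathcal{F}_{v}(q))$ and $\operatorname{Ext}_{\cdot}(\mathcal{F}_{h}(q))$ cannot globally agree: moving along the Teichm\"uller geodesic $\mathbf{G}_{q}$, the vertical extremal length decreases while the horizontal extremal length increases, so a point $X_{t}$ with $t \neq 0$ lies on $\operatorname{HS}(\mathcal{F}, X)$ only if it also lies on $\operatorname{HS}(\mathcal{G}, X)$, which is impossible since these two extremal lengths vary oppositely. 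Thus $\operatorname{HS}(\mathcal{G}, X) \neq \operatorname{HS}(\mathcal{F}, X)$.

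The main obstacle is the last step: Lemma \ref{necessary condition of tangent} only guarantees agreement of the tangent spaces at the single point $X$, whereas the corollary asserts the global inequality of the two horospheres. The cleanest route is probably to observe that the tangency analysis, combined with the fact that the horizontal and vertical extremal length functions have opposite monotonicity along $\mathbf{G}_{q}$, already rules out global coincidence; alternatively, one notes that if two $C^1$ hypersurfaces defined as regular level sets coincide then their defining gradients are everywhere proportional, which would force $\mathcal{F}_{h}(q)$ and $\mathcal{F}_{v}(q)$ to be projectively equivalent at every $Y$, contradicting their transversality. I expect the author handles this by simply using that the horizontal and vertical foliations are genuinely distinct projective classes, so that even tangency at $X$ fails unless $\mathcal{G}$ is projectively $\mathcal{F}_{v}(q)=\mathcal{F}$, making the corollary an immediate restatement of the lemma.
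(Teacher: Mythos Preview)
The paper states this corollary without proof, treating it as immediate from Lemma~\ref{necessary condition of tangent}. Your plan---reduce via the lemma to the case that $\mathcal{G}$ is projectively equivalent to $\mathcal{F}_h(q)$, then rule that case out with Minsky's inequality---is the right way to fill in what the paper leaves implicit.

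One wrinkle: your geodesic argument as written is garbled. No point $X_t$ with $t\neq 0$ lies on $\operatorname{HS}(\mathcal{F},X)$ at all (the extremal length of $\mathcal{F}$ is strictly monotone along $\mathbf{G}_q$), so the sentence ``a point $X_t$ with $t\neq 0$ lies on $\operatorname{HS}(\mathcal{F},X)$ only if \ldots'' is vacuous and yields no contradiction. The clean version is: if $\operatorname{HS}(\mathcal{F},X)=\operatorname{HS}(\mathcal{F}_h(q),X)$, then every $Y$ in this common set satisfies
\[
\operatorname{Ext}_Y(\mathcal{F})\,\operatorname{Ext}_Y(\mathcal{F}_h(q))=\operatorname{Ext}_X(\mathcal{F})\,\operatorname{Ext}_X(\mathcal{F}_h(q))=i(\mathcal{F},\mathcal{F}_h(q))^2,
\]
so by the equality case of Theorem~\ref{GM} every such $Y$ lies on $\mathbf{G}_q$; but $\mathbf{G}_q$ meets $\operatorname{HS}(\mathcal{F},X)$ only at $X$, forcing the horosphere to be a single point---absurd since $3g-3+n\geq 2$. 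Your alternative route (gradients proportional at every $Y$ of the common hypersurface, forcing all the horizontal foliations $\mathcal{F}_h(q_Y)$ to be projectively equivalent as $Y$ varies) also works.

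Finally, your closing speculation is off: Lemma~\ref{necessary condition of tangent} explicitly says that $T(\mathcal{F})=T(\mathcal{G})$ \emph{does} hold when $\mathcal{G}$ is projectively $\mathcal{F}_h(q)$, so the corollary is not a mere restatement of the lemma; ruling out the horizontal case genuinely requires the extra step you supply.
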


  Let $V \in \mathrm{Gr}(T_{X})$. Suppose that $\langle \mu_{1}, \cdots, \mu_{6g-7 + 2n}\rangle=V$. Each $\mu_{i}$ induces a linear function
    $$\hat{\mu}_{i} : Q(X) \rightarrow \mathbb{R} $$
  by
    $$ \hat{\mu}_{i}(q)= \langle \mu_{i}, q\rangle = 2 \operatorname{Re} \int_{X}\mu_{i}(z)q(z) dxdy.$$
  Let $V_{i} = \operatorname{Ker} (\hat{\mu}_{i})$.
  It is clear that $V_{i}$ is a linear subspace of $Q(X)$ and $\operatorname{dim}(V_{i}) = 6g - 7 + 2n$. Let
    $$ V^{*} = \cap_{i = 1}^{6g - 7 + 2n} V_{i}.$$
It follows from linear algebra that  $V^{*}$  is a linear subspace with $\operatorname{dim}(V^{*}) \geq 1$. This implies that there is a $q \in V^{*}$ and $q \neq 0$ such that
    $$ \langle\mu_{i}, q\rangle = 0, i = 1, \cdots, 6g - 7 + 2n.$$
  Hence $T(\mathcal{F}_{h}(q)) = \langle \mu_{1}, \cdots, \mu_{6g- 7 + 2n}\rangle$. This shows that
   \begin{lemma}\label{lemma:gr}
   The map $T$ defined above is surjective. Thus for any linear subspace $V \in \mathrm{Gr}(T_{X})$,
   there is a measured foliation $\mathcal{F}$ such that the tangent space of horosphere $\operatorname{HS}(\mathcal{F}, X)$ at $X$ is $V$.
\end{lemma}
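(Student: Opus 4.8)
The plan is to reduce the surjectivity of $T$ to an elementary fact about the duality between the tangent and cotangent spaces of $\mathcal{T}_{g,n}$ at $X$. The key geometric input is already contained in the proof of Lemma \ref{necessary condition of tangent}: by the $C^1$-property of the extremal length function together with Gardiner's variational formula, the tangent space of a horosphere is the kernel of a single $\mathbb{R}$-linear functional. Concretely, if $q \in Q(X)$ is the Hubbard-Masur differential realizing a measured foliation $\mathcal{F}$, then
$$T(\mathcal{F}) = T_X\operatorname{HS}(\mathcal{F}, X) = \{\mu \in T_X \mid \langle \mu, q\rangle = 0\}, \qquad \langle \mu, q\rangle = 2\operatorname{Re}\iint_X \mu(z)q(z)\,dx\,dy.$$
Thus proving surjectivity of $T$ is the same as showing that every $V \in \mathrm{Gr}(T_X)$ is the annihilator of some nonzero $q \in Q(X)$.

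First I would record the dimensions. The space $Q(X)$ is a complex vector space of complex dimension $3g-3+n$, hence of real dimension $6g-6+2n$, which coincides with $\dim_{\mathbb{R}} T_X$; moreover the pairing $\langle \cdot, \cdot \rangle$ is $\mathbb{R}$-bilinear and nondegenerate, so it identifies $Q(X)$ with the real dual of $T_X$. Given $V \in \mathrm{Gr}(T_X)$ of dimension $6g-7+2n$, choose a basis $\mu_1, \dots, \mu_{6g-7+2n}$ and form the functionals $\hat{\mu}_i(q) = \langle \mu_i, q\rangle$ on $Q(X)$. Each kernel $V_i = \operatorname{Ker}(\hat{\mu}_i)$ is a real hyperplane, and the standard codimension estimate for intersections of kernels gives
$$\dim\Big(\bigcap_{i} V_i\Big) \geq (6g-6+2n) - (6g-7+2n) = 1,$$
so I may pick a nonzero $q$ in this intersection. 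Since $q$ annihilates every $\mu_i$ and $\langle \cdot, q\rangle$ is a nonzero functional by nondegeneracy, its kernel has codimension exactly one and therefore equals $V$.

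It then remains only to exhibit a measured foliation realizing this $q$: by the Hubbard-Masur theorem $q$ is the Hubbard-Masur differential of its own vertical foliation, and by Lemma \ref{necessary condition of tangent} the horizontal foliation $\mathcal{F}_h(q)$ determines the same tangent hyperplane, so taking $\mathcal{F} = \mathcal{F}_h(q)$ yields $T(\mathcal{F}) = V$. The argument is genuinely elementary once these ingredients are assembled, and I do not expect a serious obstacle; the one point deserving care is the dimension bookkeeping, namely checking that $\dim_{\mathbb{R}} Q(X) = \dim_{\mathbb{R}} T_X$ so that the annihilator of a codimension-one subspace is exactly one-dimensional. This is what ensures that the functional $\langle \cdot, q\rangle$ cuts out precisely $V$ rather than some strictly larger subspace.
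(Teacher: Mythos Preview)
Your proof is correct and follows essentially the same approach as the paper: choose a basis $\mu_1,\dots,\mu_{6g-7+2n}$ of $V$, intersect the kernels of the induced functionals $\hat\mu_i$ on $Q(X)$ to find a nonzero $q$, and take $\mathcal{F}=\mathcal{F}_h(q)$. Your explicit check that nondegeneracy forces $\ker\langle\cdot,q\rangle$ to have codimension exactly one (and hence equal $V$) is a point the paper leaves implicit.
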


Gardiner and Masur \cite{gardiner1991extremal} proved that
\begin{lemma}
  Every horosphere in $\mathcal{T}_{g,n}$ is a hypersurface homeomorphic to the Euclidean space $\mathbb{R}^{6g - 7 + 2n}$.
\end{lemma}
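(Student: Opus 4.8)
The plan is to split the statement into two independent claims: that $\operatorname{HS}(\F,s)$ is a $C^1$ hypersurface, and that it is homeomorphic to $\mathbb{R}^{6g-7+2n}$. First I would verify that every $s>0$ is a regular value of the extremal length function $h(X)=\operatorname{Ext}_X(\F)$. By the variational formula of Gardiner, already used in the proof of Lemma \ref{necessary condition of tangent}, one has $dh_X[\mu]=2\operatorname{Re}\iint_X \mu(z)q(z)\,dx\,dy$, where $q\in Q(X)$ is the Hubbard--Masur differential of $\F$. Since $\F\neq 0$ forces $q\neq 0$, this linear functional on $T_X\T$ is not identically zero, so $dh_X\neq 0$ at every $X$. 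As $h$ is $C^1$ and attains every value in $(0,\infty)$, each $s$ is a regular value, and the $C^1$ implicit function theorem shows that $\operatorname{HS}(\F,s)=h^{-1}(s)$ is a closed, properly embedded $C^1$ hypersurface.

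Next I would exhibit $\T$ as a trivial line bundle over a single horosphere by means of a transversal flow. Through each $X$ run the Teichm\"uller geodesic $\mathbf{G}_{q(X)}$ determined by the Hubbard--Masur differential $q(X)$ of $\F$, so that $\F$ is its vertical foliation; along this geodesic $\operatorname{Ext}(\F)$ scales as $e^{-2t}\operatorname{Ext}_X(\F)$, hence is strictly monotone and surjects onto $(0,\infty)$. Because $q(X)$ depends continuously on $X$, the unit tangent vectors to these geodesics form a continuous, nowhere-vanishing vector field transverse to every level set of $h$. Its flow carries $\operatorname{HS}(\F,s)$ homeomorphically onto $\operatorname{HS}(\F,s')$ for all $s,s'$ and produces a homeomorphism $\T\cong\operatorname{HS}(\F,s)\times\mathbb{R}$. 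In particular each horosphere is connected, properly embedded, and contractible (being a retract of the contractible space $\T$).

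The main obstacle is the final step: upgrading ``contractible codimension-one factor of $\T$'' to ``homeomorphic to $\mathbb{R}^{6g-7+2n}$.'' The product decomposition by itself does not suffice, since a contractible open manifold $A$ with $A\times\mathbb{R}\cong\mathbb{R}^{N}$ need not be Euclidean. To obtain Euclidean-ness directly I would parametrize the horosphere by transverse foliations through the equality case of Minsky's inequality (Theorem \ref{GM}). Send $X\in\operatorname{HS}(\F,s)$ to the projective class $[\mathcal{F}_h(q(X))]$ of the horizontal foliation of $q(X)$. Conversely, a class transverse to $\F$ determines a unique Teichm\"uller geodesic carrying exactly one point with $\operatorname{Ext}(\F)=s$, at which the equality in Theorem \ref{GM} holds and the Hubbard--Masur differential of $\F$ has that prescribed horizontal foliation. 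Continuous dependence of the Hubbard--Masur differential and of the geodesic on its defining pair makes this a homeomorphism from $\operatorname{HS}(\F,s)$ onto the open set $U_\F=\{\,[\G]\in\PMF:\{\F,\G\}\text{ is transverse}\,\}\subset\PMF\cong S^{6g-7+2n}$. The technical heart, and the place where I expect the real difficulty to concentrate, is identifying $U_\F$ with an open ball $\mathbb{R}^{6g-7+2n}$; this is exactly the extremal-length geometry analyzed by Gardiner and Masur, whose conclusion we invoke.
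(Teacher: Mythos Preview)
Your proposal is correct. The paper itself does not give a proof of this lemma at all: it simply attributes the result to Gardiner and Masur \cite{gardiner1991extremal} and cites their paper. Your write-up therefore goes further than the paper does, by sketching the two preliminary reductions (regular value via Gardiner's variational formula, and the identification of $\operatorname{HS}(\F,s)$ with the open set $U_\F\subset\PMF$ of projective classes transverse to $\F$ via the horizontal foliation of the Hubbard--Masur differential) before invoking Gardiner--Masur for the genuinely hard step that $U_\F$ is an open cell. That last invocation is not a shortcut you could easily avoid: for a simple closed curve $\alpha$ the complement $\PMF\setminus U_\alpha$ is far from a single point, so the cell structure of $U_\F$ really is the substantive content of \cite{gardiner1991extremal}. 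Your intermediate product decomposition $\T\cong\operatorname{HS}(\F,s)\times\mathbb{R}$ is correct and pleasant, and you are right to flag that it alone does not yield Euclidean-ness; it plays no role in the final argument and could be omitted.
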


\subsection{Relation between horospheres}

  Let $X \in \mathcal{T}_{g,n}$ and let $\mathbf{A}$ be a subset of $\mathcal{T}_{g,n}$, we define
    $$ d_{\mathcal{T}}(X, \mathbf{A}) = \inf_{Y \in \mathbf{A}} d_{\mathcal{T}}(X, Y).$$
  If there is a point $Y \in \mathbf{A}$ such that $d_{\mathcal{T}}(X, Y) = d_{\mathcal{T}}(X, \mathbf{A})$, then $Y$ is called a \textit{foot} of $X$ on $\mathbf{A}$.

\begin{lemma}\label{Equidistant}
  Let $0 < s < t$ and $\mathcal{F} \in \mathcal{MF}$. Then horospheres $\operatorname{HS}(\mathcal{F}, s)$ and $\operatorname{HS}(\mathcal{F}, t)$ are equidistant, i.e. for any $X \in \operatorname{HS}(\mathcal{F}, s)$, we have
  $$ d_{\mathcal{T}}(X, \operatorname{HS}(\mathcal{F}, t)) = \frac{1}{2}\log \frac{t}{s}.$$
  Moreover, any $X \in \operatorname{HS}(\mathcal{F}, s)$ has a unique foot on $\operatorname{HS}(\mathcal{F}, t)$.
\end{lemma}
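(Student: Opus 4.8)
The plan is to realize the distance by an explicit Teichm\"uller geodesic in the ``direction of $\mathcal{F}$'', and to obtain the matching lower bound from Kerckhoff's formula, reserving the uniqueness of the foot for the end.

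First I would treat the upper bound. Fix $X\in\operatorname{HS}(\mathcal{F},s)$ and let $q\in Q(X)$ be the Hubbard--Masur differential of $\mathcal{F}$, so that $\mathcal{F}=\mathcal{F}_v(q)$ and $\operatorname{Ext}_X(\mathcal{F})=\|q\|=s$. Consider the Teichm\"uller geodesic $\mathbf{G}_q$ through $X$. Working in the natural coordinates of $q$, where the Teichm\"uller map stretches the horizontal and contracts the vertical direction, a direct computation shows that the extremal length of $\mathcal{F}$ varies along $\mathbf{G}_q$ by a factor $e^{\pm 2\tau}$. Flowing in the direction that increases $\operatorname{Ext}(\mathcal{F})$ and setting $\tau_0=\tfrac12\log(t/s)$, we reach a point $Y_0$ with $\operatorname{Ext}_{Y_0}(\mathcal{F})=t$, hence $Y_0\in\operatorname{HS}(\mathcal{F},t)$. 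Since the geodesic parameter is arc length, $d_{\mathcal{T}}(X,Y_0)=\tfrac12\log(t/s)$, which gives $d_{\mathcal{T}}(X,\operatorname{HS}(\mathcal{F},t))\le\tfrac12\log(t/s)$ and exhibits a candidate foot.

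Next I would prove the lower bound. Let $Y\in\operatorname{HS}(\mathcal{F},t)$ be arbitrary. By Kerckhoff's formula and the symmetry of $d_{\mathcal{T}}$,
$$e^{2d_{\mathcal{T}}(X,Y)}=\sup_{\alpha\in\mathcal{S}}\frac{\operatorname{Ext}_Y(\alpha)}{\operatorname{Ext}_X(\alpha)}.$$
The ratio on the right is invariant under scaling of $\alpha$, so it descends to $\mathcal{PMF}$; since extremal length is continuous on $\mathcal{MF}$ and $\mathcal{S}$ is dense in $\mathcal{PMF}$, choosing simple closed curves converging to $[\mathcal{F}]$ yields $e^{2d_{\mathcal{T}}(X,Y)}\ge \operatorname{Ext}_Y(\mathcal{F})/\operatorname{Ext}_X(\mathcal{F})=t/s$, i.e. $d_{\mathcal{T}}(X,Y)\ge\tfrac12\log(t/s)$. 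Combined with the previous step this proves the equidistance and shows that $Y_0$ is indeed a foot.

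Finally I would establish uniqueness, which I expect to be the main obstacle. If $Y$ is any foot, the two bounds force $\operatorname{Ext}_Y(\mathcal{F})/\operatorname{Ext}_X(\mathcal{F})=t/s=e^{2d_{\mathcal{T}}(X,Y)}$, so $\mathcal{F}$ realizes Kerckhoff's supremum for the pair $(Y,X)$. The maximizing foliation is the horizontal foliation of the Teichm\"uller differential from $X$ to $Y$ and is unique up to projective class; this rigidity is precisely the equality clause of Minsky's inequality (Theorem \ref{GM}), which holds only on the single Teichm\"uller geodesic determined by a transverse pair. Hence $X$ and $Y$ both lie on $\mathbf{G}_q$ traversed as above, and since $\operatorname{Ext}_{\bullet}(\mathcal{F})$ is strictly monotonic along that geodesic the level $t$ is attained at exactly one point; therefore $Y=Y_0$. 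The crux is exactly this last point: it is easy to see that the horizontal foliation attains the supremum, but one must rule out any other projective class attaining it, and it is here that the strict part of Theorem \ref{GM} --- pinning down both the geodesic through $X$ and its direction --- is essential. A minor technical point, handled by projective invariance together with continuity and density, is the passage from simple closed curves to the foliation $\mathcal{F}$ in Kerckhoff's formula.
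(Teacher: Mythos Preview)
Your proposal is correct and follows the same three-step scheme as the paper: the explicit Teichm\"uller ray in the direction of $\mathcal{F}$ for the upper bound, Kerckhoff's formula for the lower bound, and the rigidity of the Kerckhoff maximizer for uniqueness of the foot. The only difference is cosmetic: where you route the uniqueness through the equality clause of Theorem~\ref{GM}, the paper simply invokes the classical uniqueness of the Teichm\"uller map, which is the more direct citation for the fact that the foliation realizing $e^{2d_{\mathcal T}(X,Y)}=\operatorname{Ext}_Y(\mathcal F)/\operatorname{Ext}_X(\mathcal F)$ is projectively unique.
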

\begin{proof}
According to Kerckhoff's formula, we have
    $$ d_{\mathcal{T}}(X, Y) \geq \frac{1}{2} \log \frac{\operatorname{Ext}_{Y}(\mathcal{F})}{\operatorname{Ext}_{X}(\mathcal{F})}
    = \frac{1}{2} \log \frac{t}{s}$$
    for any $X \in \operatorname{HS}(\mathcal{F}, s), Y\in \operatorname{HS}(\mathcal{F}, t).$
    Thus $$ d_{\mathcal{T}}(X, \operatorname{HS}(\mathcal{F}, t))
    \geq \frac{1}{2}\log \frac{t}{s}.$$

  If we choose $Y\in \operatorname{HS}(\mathcal{F}, t)$ as the Teichm\"uller deformation of $X$ in the direction $q\in Q(X)$ with $\mathcal{F}_v(q)=\mathcal{F}$,
  then $$ d_{\mathcal{T}}(X,Y)= {\frac{1}{2}}\log \frac{t}{s}.$$
  As a result, $$ d_{\mathcal{T}}(X, \operatorname{HS}(\mathcal{F}, t))= \frac{1}{2} \log \frac{t}{s}.$$

  Note that $Y\in \operatorname{HS}(\mathcal{F}, t)$ is a foot of $X$ if and only if
 $$  d_{\mathcal{T}}(X, Y)= \frac{1}{2} \log \frac{\operatorname{Ext}_{Y}(\mathcal{F})}{\operatorname{Ext}_{X}(\mathcal{F})}.$$
   By the uniqueness of Teichm\"uller map, the above equality holds if and only if $Y$ is the Teichm\"uller deformation
   of $X$ in the direction $q\in Q(X)$ with $\mathcal{F}_v(q)=\mathcal{F}$. This implies that the foot $Y$ is unique.
\end{proof}
We call the Teichm\"uller geodesic passing through $X\in \operatorname{HS}(\mathcal{F}, s)$ and $Y\in \operatorname{HS}(\mathcal{F}, t)$ such that $Y$ is the foot of $X$ on $\operatorname{HS}(\mathcal{F}, t)$
 a geodesic \emph{perpendicular} to the family of horospheres $\operatorname{HS}(\mathcal{F}, s), s\in \mathbb{R}_+$.

\bigskip

To obtain further results, we first consider the asymptotic estimates of extremal length functions
on a given horosphere.

Fix a horosphere\ $\operatorname{HS}(\mathcal{F}, t)$. Consider the function $\operatorname{Ext}_{X}(\mathcal{G})$ for any $\mathcal{G} \in \mathcal{MF}$,
where $X$ runs over all points belong to $\operatorname{HS}(\mathcal{F}, t)$.
We will write $\mathcal{F}=\sum_i \mathcal{F}_i$ as the ergodic decomposition of $\mathcal{F}$.
If each indecomposable component of $\mathcal{G}$ is projectively equivalent to one
of the indecomposable components of $\mathcal{F}$,
we denote by $\mathcal{G}\prec \mathcal{F}$;
otherwise, $\mathcal{G}\nprec \mathcal{F}$.

\begin{lemma}\label{Estimate of extremal length}
    Fix a horosphere $\operatorname{HS}(\mathcal{F}, t)$. For any $\mathcal{G} \in \mathcal{MF}$, we have
    \begin{enumerate}
      \item If $i(\mathcal{F}, \mathcal{G}) \neq 0$, then
            $$\inf_{X\in \operatorname{HS}(\mathcal{F}, t)} \operatorname{Ext}_{X}(\mathcal{G})>0,
            \sup_{X\in \operatorname{HS}(\mathcal{F}, t)} \operatorname{Ext}_{X}(\mathcal{G})=\infty.$$
      \item If $i(\mathcal{F}, \mathcal{G}) = 0$ and $\mathcal{G}\prec \mathcal{F}$, then $$\sup_{X\in \operatorname{HS}(\mathcal{F}, t)} \operatorname{Ext}_{X}(\mathcal{G})<\infty.$$
      \item If $i(\mathcal{F}, \mathcal{G}) = 0$ and $\mathcal{G}\nprec \mathcal{F}$,
      then $$\sup_{X\in \operatorname{HS}(\mathcal{F}, t)} \operatorname{Ext}_{X}(\mathcal{G})=\infty.$$
    \end{enumerate}
\end{lemma}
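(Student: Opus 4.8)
The plan is to treat the three items by different means: the lower bound in (1) and the finiteness in (2) are immediate consequences of the variational formula for extremal length (Corollary \ref{coro:ext}), whereas the three unboundedness assertions (the supremum in (1) and all of (3)) carry the real content. I record first three consequences of Corollary \ref{coro:ext}, valid for all $\mathcal{F},\mathcal{G}\in\mathcal{MF}$ and all $X$: letting simple closed curves tend to $\mathcal{G}$ gives the \emph{Minsky inequality} $i(\mathcal{F},\mathcal{G})^2\le\operatorname{Ext}_X(\mathcal{F})\operatorname{Ext}_X(\mathcal{G})$; if $i(\mathcal{F}',\cdot)\le i(\mathcal{F},\cdot)$ pointwise on $\mathcal{S}$ then $\operatorname{Ext}_X(\mathcal{F}')\le\operatorname{Ext}_X(\mathcal{F})$ (\emph{monotonicity}); and since $i(\cdot,\gamma)$ is additive over an ergodic decomposition, a supremum-of-sums estimate gives the \emph{subadditivity} $\operatorname{Ext}_X(\mathcal{F}+\mathcal{F}')^{1/2}\le\operatorname{Ext}_X(\mathcal{F})^{1/2}+\operatorname{Ext}_X(\mathcal{F}')^{1/2}$. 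For the infimum in (1), on $\operatorname{HS}(\mathcal{F},t)$ the Minsky inequality gives $\operatorname{Ext}_X(\mathcal{G})\ge i(\mathcal{F},\mathcal{G})^2/t>0$. For (2), writing the ergodic decompositions $\mathcal{F}=\sum_i\mathcal{F}_i$ and $\mathcal{G}=\sum_j\mathcal{G}_j$, the hypothesis $\mathcal{G}\prec\mathcal{F}$ means $\mathcal{G}_j=c_j\mathcal{F}_{\sigma(j)}$ with $c_j>0$; monotonicity gives $\operatorname{Ext}_X(\mathcal{F}_{\sigma(j)})\le\operatorname{Ext}_X(\mathcal{F})=t$ and subadditivity gives $\operatorname{Ext}_X(\mathcal{G})^{1/2}\le\sum_jc_j\,\operatorname{Ext}_X(\mathcal{F}_{\sigma(j)})^{1/2}\le(\sum_jc_j)\,t^{1/2}$, uniformly over the horosphere.

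For the unboundedness statements I would reduce both cases to producing simple closed curves $\gamma_m\in\mathcal{S}$ and an indecomposable $\mathcal{G}_0$ with $\operatorname{Ext}_X(\mathcal{G})\ge\operatorname{Ext}_X(\mathcal{G}_0)$ and
$$\frac{i(\mathcal{F},\gamma_m)}{i(\mathcal{G}_0,\gamma_m)}\longrightarrow 0.$$
In case (1) I set $\mathcal{G}_0=\mathcal{G}$ and take $\gamma_m\to[\mathcal{F}]$ in $\mathcal{PMF}$ (legitimate since $\mathcal{S}$ is dense): choosing representatives with $c_m\gamma_m\to\mathcal{F}$ in $\mathcal{MF}$, one has $i(\mathcal{F},c_m\gamma_m)\to i(\mathcal{F},\mathcal{F})=0$ while $i(\mathcal{G},c_m\gamma_m)\to i(\mathcal{G},\mathcal{F})=i(\mathcal{F},\mathcal{G})>0$, so the ratio tends to $0$. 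In case (3) I take $\mathcal{G}_0$ to be an indecomposable component of $\mathcal{G}$ with $\mathcal{G}_0\nprec\mathcal{F}$; then $\mathcal{G}_0$ is projectively distinct from every component $\mathcal{F}_1,\dots,\mathcal{F}_k$ of $\mathcal{F}$ and $i(\mathcal{G}_0,\mathcal{F}_i)=0$ for all $i$. Applying Lemma \ref{Walsh2014Lemma6.3} to $\{\mathcal{G}_0,\mathcal{F}_1,\dots,\mathcal{F}_k\}$ with $\varepsilon=\varepsilon_m\to0$ yields $\gamma_m$ with $i(\mathcal{F}_i,\gamma_m)<\varepsilon_m\,i(\mathcal{G}_0,\gamma_m)$ for every $i$; summing over $i$ gives $i(\mathcal{F},\gamma_m)<k\varepsilon_m\,i(\mathcal{G}_0,\gamma_m)$, and the lemma forces $i(\mathcal{G}_0,\gamma_m)>0$, so again the ratio tends to $0$.

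Given such $\gamma_m$, the cleanest conclusion runs along the Teichm\"uller geodesic $\mathbf{G}_m$ determined by the transverse pair $\{\mathcal{F},\gamma_m\}$, with $\mathcal{F}$ horizontal and $\gamma_m$ vertical. By the equality case of Theorem \ref{GM} one has $\operatorname{Ext}_Y(\mathcal{F})\operatorname{Ext}_Y(\gamma_m)=i(\mathcal{F},\gamma_m)^2$ for every $Y\in\mathbf{G}_m$, and $\operatorname{Ext}_Y(\mathcal{F})$ varies monotonically over $(0,\infty)$ along $\mathbf{G}_m$; let $Y_m$ be the unique point with $\operatorname{Ext}_{Y_m}(\mathcal{F})=t$, so that $Y_m\in\operatorname{HS}(\mathcal{F},t)$ and $\operatorname{Ext}_{Y_m}(\gamma_m)=i(\mathcal{F},\gamma_m)^2/t$. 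Applying Corollary \ref{coro:ext} to the single curve $\gamma_m$ then gives
$$\operatorname{Ext}_{Y_m}(\mathcal{G})\ \ge\ \operatorname{Ext}_{Y_m}(\mathcal{G}_0)\ \ge\ \frac{i(\mathcal{G}_0,\gamma_m)^2}{\operatorname{Ext}_{Y_m}(\gamma_m)}\ =\ t\left(\frac{i(\mathcal{G}_0,\gamma_m)}{i(\mathcal{F},\gamma_m)}\right)^{2}\longrightarrow\infty,$$
whence $\sup_{X\in\operatorname{HS}(\mathcal{F},t)}\operatorname{Ext}_X(\mathcal{G})=\infty$ in both cases.

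The main obstacle is that $\mathbf{G}_m$ exists only when $\{\mathcal{F},\gamma_m\}$ is jointly filling, which may fail when $\mathcal{F}$ does not fill $S$. To remove this hypothesis I would replace the exact point $Y_m$ by a projection argument: pinch $\gamma_m$ to a surface $Z_m$ at which $\operatorname{Ext}_{Z_m}(\mathcal{F})$ is confined to the fixed interval $[t/2,2t]$ while $\operatorname{Ext}_{Z_m}(\mathcal{G})\ge i(\mathcal{G}_0,\gamma_m)^2/\operatorname{Ext}_{Z_m}(\gamma_m)\to\infty$, and take $X_m$ to be the foot of $Z_m$ on $\operatorname{HS}(\mathcal{F},t)$ along the perpendicular geodesic of Lemma \ref{Equidistant}. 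That projection moves $Z_m$ a Teichm\"uller distance at most $\tfrac12\log2$, so Kerckhoff's formula gives $\operatorname{Ext}_{X_m}(\mathcal{G})\ge\tfrac12\operatorname{Ext}_{Z_m}(\mathcal{G})\to\infty$ with $X_m\in\operatorname{HS}(\mathcal{F},t)$. The delicate point, which I expect to be the technical heart, is to decouple the two extremal lengths during the pinch—keeping $\operatorname{Ext}(\mathcal{F})$ bounded while $\operatorname{Ext}(\mathcal{G})$ blows up—for which I would use the product-region description of extremal length in the neighbourhood of the short curve $\gamma_m$, together with the estimate $i(\mathcal{F},\gamma_m)\ll i(\mathcal{G}_0,\gamma_m)$ already arranged.
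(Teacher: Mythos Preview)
Your arguments for the infimum in (1) and for (2) are correct. For (2) your route via monotonicity and square-root subadditivity of extremal length (both immediate from Corollary~\ref{coro:ext}) is in fact cleaner than the paper's, which invokes the Lenzhen--Masur approximation of $\mathcal{F}$ by multicurves to obtain $\operatorname{Ext}_X(\mathcal{F}_i)\le\operatorname{Ext}_X(\mathcal{F})$.

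The real issue is the one you flag yourself: the Teichm\"uller geodesic $\mathbf{G}_m$ determined by $\{\mathcal{F},\gamma_m\}$ need not exist, since a simple closed curve $\gamma_m$ together with a non-filling $\mathcal{F}$ typically does not fill $S$. Your proposed repair via pinching $\gamma_m$ and product regions is only a sketch; you do not actually carry out the ``decoupling'' of $\operatorname{Ext}(\mathcal{F})$ from $\operatorname{Ext}(\mathcal{G})$, and as written the argument is incomplete.

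There is a much simpler fix, and it is precisely what the paper does in case~(3): use the density of \emph{uniquely ergodic} foliations in $\mathcal{MF}$ rather than of simple closed curves. Once you have produced $\beta\in\mathcal{S}$ (via Lemma~\ref{Walsh2014Lemma6.3}) with $i(\mathcal{G}_0,\beta)/i(\mathcal{F},\beta)\ge M$, perturb $\beta$ to a nearby uniquely ergodic $\mathcal{F}'\in\mathcal{UMF}$ with $i(\mathcal{G}_0,\mathcal{F}')/i(\mathcal{F},\mathcal{F}')\ge M/2$. Since $\mathcal{F}'$ is minimal and fills $S$, the pair $\{\mathcal{F},\mathcal{F}'\}$ is automatically transverse, the geodesic exists, and your Minsky-inequality computation goes through verbatim with $\mathcal{F}'$ in place of $\gamma_m$. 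The same trick handles the supremum in (1): choose $\mathcal{F}'_m\in\mathcal{UMF}$ with $[\mathcal{F}'_m]\to[\mathcal{F}]$, so that $i(\mathcal{F},\mathcal{F}'_m)/i(\mathcal{G},\mathcal{F}'_m)\to 0$ and $\{\mathcal{F},\mathcal{F}'_m\}$ fills. For the record, the paper treats the supremum in (1) by an entirely different device---the horocycle flow through a fixed $X\in\operatorname{HS}(\mathcal{F},t)$, estimating flat lengths directly---which sidesteps any filling hypothesis; but your approach, once patched with uniquely ergodic test foliations, is equally valid and unifies (1) and (3).
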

\begin{proof}
(1) According to Minsky's inequality (Theorem \ref{Minsky}), we have
  $$ \operatorname{Ext}_{X}(\mathcal{F}) \operatorname{Ext}_{X}(\mathcal{G}) \geq i(\mathcal{F}, \mathcal{G})^2 > 0.$$
Then
 $$ \inf_{X\in \operatorname{HS}(\mathcal{F}, t)} \operatorname{Ext}_{X}(\mathcal{G})\geq \frac{i(\mathcal{F}, \mathcal{G})^2}
 {t}.$$

For the supremum, we use the action of horocycle flow on $\T$.  Choose any $X\in \operatorname{HS}(\mathcal{F}, t)$.
Denote by $q$ the Hubbard-Masur differential of $\mathcal{F}$ on $X$ and $\mathcal{G}'$ the horizontal measured foliations of $q$. In local coordinates $z=x+iy$  on which $q = d z^2$, the horocycle flow $h^s: \Q \to \Q$ acts on $q$
by

\begin{equation*}
\left(
  \begin{array}{c}
    d y \\
    d x \\
  \end{array}
\right)
\mapsto
\left(
  \begin{array}{cc}
    1 &  s\\
    0 &  1\\
  \end{array}
\right)
\left(
  \begin{array}{c}
    d y \\
   d x \\
  \end{array}
\right)
=
\left(
  \begin{array}{c}
    d y+ s d x \\
    d x \\
  \end{array}
\right).
\end{equation*}
The horocycle flow acts on the horosphere $\operatorname{HS}(\mathcal{F}, t)$. For any closed curve $\gamma \in \mathcal{S}$,
its length under the flat metric $|h^s(q)|$ has an explicit lower bound:
\begin{eqnarray*}
\int_\gamma |s d x +  d y| &\geq& |s|i(\F,\gamma)-i(\G'\gamma).
\end{eqnarray*}
The area of $h^s(q)$ is equal to $\|q\|$.
Denote by $X_s$ the projection of $h^s(q)$ on $\mathcal{T}_{g,n}$.
Then $X_{s} \in \textrm{HS}(\F, t)$.
By definition of extremal length, we have
\begin{eqnarray*}
\operatorname{Ext}_{X_s}(\gamma)&\geq& \frac{s^{2}i(\F,\gamma)^2-2s i(\F,\gamma)i(\G',\gamma)+i(\G',\gamma)^{2}}{\|q\|}\\
&\geq& \frac{s^{2}i(\F,\gamma)^2}{2\|q\|}
\end{eqnarray*}
when $s$ is sufficiently large.
 By continuity, the above inequality applies to general measured foliations. In particular, when $i(\F, \G) \neq 0$, we have
 $$ \sup_{X\in \operatorname{HS}(\mathcal{F}, t)} \operatorname{Ext}_{X}(\mathcal{G})=\infty.$$

(2) Decompose $\mathcal{F}$ into its indecomposable components
    $$ \mathcal{F} = \sum_{i=1}^{k}\mathcal{F}_{i}.$$
  According to Lenzhen-Masur \cite[Theorem C]{lenzhen2010criteria}, there exists a sequence of multi-curves
  $$\sum_{i=1}^{k} s^{i}_{n}\gamma^{i}_{n} \rightarrow \mathcal{F},\   s^{i}_{n}\gamma^{i}_{n} \rightarrow  \mathcal{F}_{i},\  n \rightarrow \infty.$$
  Note that each $\gamma^{i}_{n}$ may itself be a multi-curve. By continuity, we have
    $\operatorname{Ext}_{X}(\sum_{i=1}^{k} s^{i}_{n}\gamma^{i}_{n}) \rightarrow \operatorname{Ext}_{X}(\mathcal{F})$
  as
    $n \rightarrow \infty$.
  By definition of extremal length, we have
    $$ \operatorname{Ext}_{X}(s^{i}_{n}\gamma^{i}_{n}) \leq \operatorname{Ext}_{X}(\sum_{j=1}^{k} s^{j}_{n}\gamma^{j}_{n}).$$
  Then we have
    $$ \operatorname{Ext}_{X}(\mathcal{F}_{i}) \leq \operatorname{Ext}_{X}(\mathcal{F})=t.$$

  Since $\mathcal{G}\prec \mathcal{F}$, we can write $\mathcal{G}$ as
  $\mathcal{G}  = \sum_{i=1}^k a_{i}\mathcal{F}_{i}, \ a_i\geq 0 $.
  Use the definition of extremal length and the above result of Lenzhen-Masur again, we have
 \begin{eqnarray*}
 \operatorname{Ext}_{X}(\sum_{i=1}^k a_{i}\mathcal{F}_{i})
  &\leq& \left( \sum_{i=1}^k a_{i}\sqrt{\operatorname{Ext}_{X}(\mathcal{F}_{i})} \right)^2 \\
 &\leq& \left( \sum_{i=1}^k a_{i} \right)^2 t.
 \end{eqnarray*}

(3) Without loss of generality, we may assume that $\mathcal{G}$ is indecomposable and $\mathcal{G}$ is disjoint from  $\mathcal{F}$. By Lemma \ref{Walsh2014Lemma6.3}, for any $M \in \mathbb{R}_{+}$ there is $\beta \in \mathcal{S}$ such that
  \begin{equation*}
    \frac{i(\mathcal{G}, \beta)}{i(\mathcal{F}, \beta)} \geq M.
  \end{equation*}
  Since uniquely ergodic measured foliations are dense in $\mathcal{MF}$,
  there is a uniquely ergodic measured foliation $\mathcal{F}'$ such that
  \begin{equation*}
    \frac{i(\mathcal{G}, \mathcal{F}')}{i(\mathcal{F}, \mathcal{F}')} \geq \frac{M}{2}.
  \end{equation*}
  It is clear that $\{\mathcal{F}, \mathcal{F}'\}$ is a pair of transverse measured foliations.
  By Theorem \ref{Minsky},  there is a point $X \in \operatorname{HS}(\mathcal{F}, t)$ satisfying
  \begin{equation*}
    \operatorname{Ext}_{X}(\mathcal{F}) = \frac{i(\mathcal{F}, \mathcal{F}')^{2}}{\operatorname{Ext}_{X}(\mathcal{F}')}.
  \end{equation*}
  In fact, $X$ is the intersection point of $\operatorname{HS}(\mathcal{F}, t)$ with the Teichm\"uller geodesic
  determined by $\mathcal{F}$ and $\mathcal{F}'$.
It follows that
  \begin{equation*}
    \operatorname{Ext}_{X}(\mathcal{G}) \geq \frac{i(\mathcal{G}, \mathcal{F}')^{2}}{\operatorname{Ext}_{X}(\mathcal{F}')} \geq (\frac{M}{2})^2 \operatorname{Ext}_{X}(\mathcal{F}).
  \end{equation*}
  This implies that $$\sup_{X \in \operatorname{HS}(\mathcal{F}, t)}\operatorname{Ext}_{X}(\mathcal{G}) = + \infty.$$
The proof is complete.
 \end{proof}

\begin{remark}\label{rem:inclusion}
  If $\operatorname{HS}(\mathcal{F}, t)$ is a horosphere and $\mathcal{G} \in \mathcal{MF}$ satisfies
    $$ \sup_{X \in \operatorname{HS}(\mathcal{F}, t)}\operatorname{Ext}_{X}(\mathcal{G}) \leq M$$
  for some $M \in \mathbb{R}_{+}$, then
    $$ \operatorname{HB}(\mathcal{F}, t) \subset \operatorname{HB}(\mathcal{G}, s) $$
  when $s \geq M$. We recall that $\operatorname{HB}(\mathcal{F}, t)$ is the horoball whose boundary is
  $\operatorname{HS}(\mathcal{F}, t)$, see Definition \ref{def:3.1}.
\end{remark}

 The following corollary is immediate:

\begin{corollary}\label{Relations}
  Given any horosphere $\operatorname{HS}(\mathcal{F}, t)$, we have
  \begin{enumerate}
    \item If $i(\mathcal{F}, \mathcal{G}) \neq 0$, then there exists $s_{0} = s_0(\mathcal{G}, t) > 0$ such that
        $$ \operatorname{HB}(\mathcal{F}, t) \cap \operatorname{HB}(\mathcal{G}, s) = \emptyset, \ s < s_{0}.$$
    \item If $i(\mathcal{F}, \mathcal{G}) = 0$ and  $\mathcal{G}\prec \mathcal{F}$,
    then there exists $s_{0} = s_{0}(\mathcal{G}, t) > 0$ such that
            $$ \operatorname{HB}(\mathcal{F}, t) \subset \operatorname{HB}(\mathcal{G}, s), \ s > s_{0}.$$
  \end{enumerate}
\end{corollary}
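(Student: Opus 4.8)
The plan is to read both statements off the pointwise extremal-length estimates already in hand, since each claim about horoballs is, after unwinding the definitions, a claim about the sublevel sets $\{\operatorname{Ext}_\cdot(\mathcal{F}) < t\}$ and $\{\operatorname{Ext}_\cdot(\mathcal{G}) < s\}$. The two inputs are Minsky's inequality (Theorem \ref{GM}) for part (1) and Lemma \ref{Estimate of extremal length}(2) together with Remark \ref{rem:inclusion} for part (2); no new geometric idea should be needed.

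For (1), I would estimate directly on the open horoball. Fix $X \in \operatorname{HB}(\mathcal{F}, t)$, so $\operatorname{Ext}_X(\mathcal{F}) < t$. Minsky's inequality gives $\operatorname{Ext}_X(\mathcal{F})\operatorname{Ext}_X(\mathcal{G}) \geq i(\mathcal{F}, \mathcal{G})^2$, whence $\operatorname{Ext}_X(\mathcal{G}) > i(\mathcal{F}, \mathcal{G})^2 / t$. Setting $s_0 := i(\mathcal{F}, \mathcal{G})^2 / t$, which is positive exactly because $i(\mathcal{F}, \mathcal{G}) \neq 0$, every point of $\operatorname{HB}(\mathcal{F}, t)$ satisfies $\operatorname{Ext}_\cdot(\mathcal{G}) > s_0$ and therefore lies outside $\operatorname{HB}(\mathcal{G}, s)$ whenever $s < s_0$. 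This yields $\operatorname{HB}(\mathcal{F}, t) \cap \operatorname{HB}(\mathcal{G}, s) = \varnothing$ for $s < s_0$.

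For (2), I would first apply Lemma \ref{Estimate of extremal length}(2) to obtain a finite bound $M := \sup_{X \in \operatorname{HS}(\mathcal{F}, t)} \operatorname{Ext}_X(\mathcal{G})$ and then quote Remark \ref{rem:inclusion} with $s_0 := M$ to conclude $\operatorname{HB}(\mathcal{F}, t) \subset \operatorname{HB}(\mathcal{G}, s)$ for all $s > s_0$. Should a self-contained argument be preferred, I would rerun the Cauchy--Schwarz computation from the proof of Lemma \ref{Estimate of extremal length}(2) pointwise: writing $\mathcal{G} = \sum_{i=1}^{k} a_i \mathcal{F}_i$ with $a_i \geq 0$ and using that the monotonicity $\operatorname{Ext}_X(\mathcal{F}_i) \leq \operatorname{Ext}_X(\mathcal{F})$ holds at every $X$ (not only on the horosphere), one gets $\operatorname{Ext}_X(\mathcal{G}) \leq \big(\sum_{i} a_i^2\big)\, k\, \operatorname{Ext}_X(\mathcal{F})$. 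Hence on $\operatorname{HB}(\mathcal{F}, t)$ one has $\operatorname{Ext}_X(\mathcal{G}) < \big(\sum_{i} a_i^2\big) k t =: s_0$, so $\operatorname{HB}(\mathcal{F}, t) \subset \operatorname{HB}(\mathcal{G}, s)$ for every $s > s_0$.

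The one point I would watch is the passage in (2) from the horosphere $\operatorname{HS}(\mathcal{F}, t)$ to the full open horoball $\operatorname{HB}(\mathcal{F}, t)$, since Lemma \ref{Estimate of extremal length}(2) is stated only on the boundary horosphere. This is precisely the content of Remark \ref{rem:inclusion}, and the self-contained computation above explains why it holds: the comparison $\operatorname{Ext}_X(\mathcal{G}) \leq C\,\operatorname{Ext}_X(\mathcal{F})$ is a pointwise inequality on all of $\mathcal{T}_{g,n}$, so decreasing $\operatorname{Ext}_X(\mathcal{F})$ only improves the bound. Part (1) carries no such subtlety, as Minsky's inequality is already pointwise.
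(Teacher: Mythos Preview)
Your proof is correct and is exactly the argument the paper has in mind: the corollary is stated there as ``immediate'' from Lemma~\ref{Estimate of extremal length} and Remark~\ref{rem:inclusion}, and you have simply written out those implicit steps. Your care in (2) about passing from the horosphere to the open horoball via the pointwise inequality $\operatorname{Ext}_X(\mathcal{G}) \leq C\,\operatorname{Ext}_X(\mathcal{F})$ is the right justification for Remark~\ref{rem:inclusion}, which the paper itself leaves unproved.
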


%

Recall that $\mathcal{MF}_{ind}$ is defined as the set of indecomposable measured foliations on $S$.

\begin{proposition}\label{Main-Lemma-One}
  Let $\mathcal{F} \in \mathcal{MF}_{ind}$ and $s \in \mathbb{R}_{+}$. If there exist $\mathcal{G} \in \mathcal{MF}$ and $t \in \mathbb{R}_{+}$ such that
  \begin{equation*}
    \operatorname{HB}(\mathcal{F}, s) \subset \operatorname{HB}(\mathcal{G}, t),
  \end{equation*}
  then $\mathcal{G} = k \mathcal{F}$, for some $k \in \mathbb{R}_{+}$.
\end{proposition}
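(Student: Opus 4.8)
The plan is to convert the set-theoretic inclusion $\operatorname{HB}(\mathcal{F}, s) \subset \operatorname{HB}(\mathcal{G}, t)$ into a uniform upper bound for the function $X \mapsto \operatorname{Ext}_X(\mathcal{G})$ along the horospheres that foliate the interior of $\operatorname{HB}(\mathcal{F}, s)$, and then to read off the structure of $\mathcal{G}$ from the trichotomy in Lemma \ref{Estimate of extremal length}. First I would observe that for every $s' < s$ the horosphere $\operatorname{HS}(\mathcal{F}, s') = \{X : \operatorname{Ext}_X(\mathcal{F}) = s'\}$ lies inside the open horoball $\operatorname{HB}(\mathcal{F}, s)$, hence inside $\operatorname{HB}(\mathcal{G}, t)$. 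Therefore every $X \in \operatorname{HS}(\mathcal{F}, s')$ satisfies $\operatorname{Ext}_X(\mathcal{G}) < t$, which yields
$$\sup_{X \in \operatorname{HS}(\mathcal{F}, s')} \operatorname{Ext}_X(\mathcal{G}) \leq t < \infty.$$

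Next I would apply Lemma \ref{Estimate of extremal length} to the horosphere $\operatorname{HS}(\mathcal{F}, s')$, noting that the statement there is valid for an arbitrary parameter, not only the distinguished value $t$. Cases $(1)$ and $(3)$ of that lemma each force $\sup_{X \in \operatorname{HS}(\mathcal{F}, s')} \operatorname{Ext}_X(\mathcal{G}) = \infty$, in direct contradiction with the bound just obtained. The only surviving possibility is case $(2)$, so I conclude that $i(\mathcal{F}, \mathcal{G}) = 0$ and $\mathcal{G} \prec \mathcal{F}$.

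Finally I would invoke the hypothesis $\mathcal{F} \in \mathcal{MF}_{ind}$. Since $\mathcal{F}$ is indecomposable, its only indecomposable component is $\mathcal{F}$ itself, so the relation $\mathcal{G} \prec \mathcal{F}$ says that every term in the ergodic decomposition $\mathcal{G} = \sum_j \mathcal{G}_j$ is projectively equivalent to $\mathcal{F}$; equivalently, each $\mathcal{G}_j = c_j\,\mathcal{F}$ in $\mathcal{MF}$ for some $c_j > 0$. But the components of an ergodic decomposition are projectively distinct, whereas the $\mathcal{G}_j$ are pairwise projectively equivalent (all projectively $\mathcal{F}$); by the uniqueness of the decomposition there can be only one such term. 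Hence $\mathcal{G} = k\,\mathcal{F}$ for some $k > 0$, as claimed (taking $\mathcal{G}$ to be a nonzero measured foliation, as is implicit for $\operatorname{HB}(\mathcal{G}, t)$ to be a proper horoball).

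I expect the last step to be the only delicate point: one must verify carefully that ``$\mathcal{G} \prec \mathcal{F}$ with $\mathcal{F}$ indecomposable'' genuinely collapses $\mathcal{G}$ to a single positive multiple of $\mathcal{F}$. This rests on the uniqueness of the ergodic decomposition together with the elementary observation that two projectively equivalent indecomposable measured foliations coincide, up to a positive scale, as elements of $\mathcal{MF}$. By contrast, the extremal-length input in the first two paragraphs is essentially a direct quotation of Lemma \ref{Estimate of extremal length} and should require no further work.
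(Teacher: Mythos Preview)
Your proposal is correct and follows essentially the same route as the paper: bound $\operatorname{Ext}_X(\mathcal{G})$ on horospheres inside $\operatorname{HB}(\mathcal{F},s)$, invoke Lemma~\ref{Estimate of extremal length} to force $\mathcal{G}\prec\mathcal{F}$, and then use indecomposability of $\mathcal{F}$ to collapse $\mathcal{G}$ to a multiple of $\mathcal{F}$. You are in fact more careful than the paper in the final step---the paper writes ``since $\mathcal{F}$ is uniquely ergodic'' where it should say ``since $\mathcal{F}$ is indecomposable,'' and your argument via uniqueness of the ergodic decomposition is the correct justification.
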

\begin{proof}
  By definition,
  $$ \sup_{X \in \operatorname{HB}(\mathcal{F}, s)} \operatorname{Ext}_{X}(\mathcal{G}) \leq t.$$
  It follows from Lemma \ref{Estimate of extremal length} that $\G\prec \F$. Since  $\mathcal{F}$ is indecomposable,
  $\mathcal{G}$ must be a multiple of $\mathcal{F}$.
\end{proof}

\subsection{Horospheres tangent to each other}

\begin{definition}\label{tangential}
Two horospheres $\operatorname{HS}(\mathcal{F}, s)$ and $\operatorname{HS}(\mathcal{G}, t)$ are tangent to each other if they satisfy the following conditions:
  \begin{enumerate}
    \item $\operatorname{HB}(\mathcal{F}, s) \cap \operatorname{HB}(\mathcal{G}, t) = \emptyset$,
    \item $\operatorname{HS}(\mathcal{F}, s) \cap \operatorname{HS}(\mathcal{G}, t) \neq \emptyset$.
  \end{enumerate}
\end{definition}


If $\operatorname{HS}(\mathcal{F}, s)$ and $\operatorname{HS}(\mathcal{G}, t)$ are tangent to each other,
then it is necessary that $\{\mathcal{F}, \G\}$ is a pair of transverse measured foliations.
 This follows from Lemma \ref{necessary condition of tangent}. In fact, at any $X\in \operatorname{HS}(\mathcal{F}, s)\cap\operatorname{HS}(\mathcal{G}, t)$, we have
 $T(\F)=T(\G)$ (otherwise, one can check that the  hypothesis $\operatorname{HB}(\mathcal{F}, s) \cap \operatorname{HB}(\mathcal{G}, t) = \emptyset$ is not satisfied).
And then $\F$ and $\G$ should be equivalent to the vertical and horizontal measured foliations of some
quadratic differential.

The following lemma will show that, if two horospheres $\operatorname{HS}(\mathcal{F}, s)$ and $\operatorname{HS}(\mathcal{G}, t)$ are tangent to each other,
then they have a unique intersection point. This is not obvious from the above definition.

\begin{lemma}\label{condition of tangential}
  Let \{$\mathcal{F}, \mathcal{G}$\} be a pair of transverse measured foliations.
  Then the horospheres $\operatorname{HS}(\mathcal{F}, s)$ and $\operatorname{HS}(\mathcal{G}, t)$ are tangent
   to each other if and only if
   \begin{equation}\label{equ:tangent}
    s \cdot  t = i(\mathcal{F}, G)^2.
    \end{equation}
   When the condition holds, $\operatorname{HS}(\mathcal{F}, s)$ and $\operatorname{HS}(\mathcal{G}, t)$
   have a unique intersection point.
\end{lemma}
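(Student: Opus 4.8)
The plan is to reduce everything to the unique Teichm\"uller geodesic $\mathbf{G}$ determined by the transverse pair $\{\mathcal{F},\mathcal{G}\}$, on which Minsky's inequality (Theorem \ref{GM}) becomes an equality. First I would record two facts about $\mathbf{G}$. Since the transverse pair determines a quadratic differential whose vertical and horizontal foliations are projectively $\mathcal{F}$ and $\mathcal{G}$, we have $i(\mathcal{F},\mathcal{G})>0$; and by the equality case of Theorem \ref{GM}, every point $X\in\mathbf{G}$ satisfies $\operatorname{Ext}_{X}(\mathcal{F})\operatorname{Ext}_{X}(\mathcal{G})=i(\mathcal{F},\mathcal{G})^{2}$. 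Second, parametrizing $\mathbf{G}$ by the Teichm\"uller deformation $\mathbf{G}(r)$ in the direction whose vertical foliation is $\mathcal{F}$, the scaling of extremal length along a Teichm\"uller geodesic (as used in Lemma \ref{Equidistant}) gives $\operatorname{Ext}_{\mathbf{G}(r)}(\mathcal{F})=e^{2r}\operatorname{Ext}_{\mathbf{G}(0)}(\mathcal{F})$, so $r\mapsto\operatorname{Ext}_{\mathbf{G}(r)}(\mathcal{F})$ is a strictly increasing homeomorphism onto $(0,\infty)$.

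For the direction that $s\cdot t=i(\mathcal{F},\mathcal{G})^{2}$ implies tangency, I would verify the two conditions of Definition \ref{tangential} separately. Surjectivity of $\operatorname{Ext}(\mathcal{F})$ along $\mathbf{G}$ produces a point $X_{0}\in\mathbf{G}$ with $\operatorname{Ext}_{X_{0}}(\mathcal{F})=s$; the product relation then forces $\operatorname{Ext}_{X_{0}}(\mathcal{G})=i(\mathcal{F},\mathcal{G})^{2}/s=t$, so $X_{0}\in\operatorname{HS}(\mathcal{F},s)\cap\operatorname{HS}(\mathcal{G},t)$, giving condition (2). For condition (1), Minsky's inequality shows $\operatorname{Ext}_{X}(\mathcal{F})\operatorname{Ext}_{X}(\mathcal{G})\geq i(\mathcal{F},\mathcal{G})^{2}=st$ for every $X$, so no $X$ can simultaneously satisfy $\operatorname{Ext}_{X}(\mathcal{F})<s$ and $\operatorname{Ext}_{X}(\mathcal{G})<t$; hence $\operatorname{HB}(\mathcal{F},s)\cap\operatorname{HB}(\mathcal{G},t)=\varnothing$.

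For the converse, suppose the horospheres are tangent. Condition (2) gives an intersection point, so by Minsky $st=\operatorname{Ext}_{X}(\mathcal{F})\operatorname{Ext}_{X}(\mathcal{G})\geq i(\mathcal{F},\mathcal{G})^{2}$ there. If the inequality were strict, $st>i(\mathcal{F},\mathcal{G})^{2}$, I would choose $a$ with $i(\mathcal{F},\mathcal{G})^{2}/t<a<s$ (a nonempty range precisely because $i(\mathcal{F},\mathcal{G})^{2}<st$) and use surjectivity to find $X_{1}\in\mathbf{G}$ with $\operatorname{Ext}_{X_{1}}(\mathcal{F})=a<s$; then $\operatorname{Ext}_{X_{1}}(\mathcal{G})=i(\mathcal{F},\mathcal{G})^{2}/a<t$, so $X_{1}\in\operatorname{HB}(\mathcal{F},s)\cap\operatorname{HB}(\mathcal{G},t)$, contradicting condition (1). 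Hence $st=i(\mathcal{F},\mathcal{G})^{2}$.

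Finally, for uniqueness of the intersection point under tangency, any $X\in\operatorname{HS}(\mathcal{F},s)\cap\operatorname{HS}(\mathcal{G},t)$ satisfies $\operatorname{Ext}_{X}(\mathcal{F})\operatorname{Ext}_{X}(\mathcal{G})=st=i(\mathcal{F},\mathcal{G})^{2}$, so equality holds in Minsky's inequality and $X$ lies on $\mathbf{G}$ by the equality case of Theorem \ref{GM}. Since $r\mapsto\operatorname{Ext}_{\mathbf{G}(r)}(\mathcal{F})$ is strictly monotone, the equation $\operatorname{Ext}_{X}(\mathcal{F})=s$ has at most one solution on $\mathbf{G}$, so the intersection point is unique. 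I expect the only genuinely nontrivial ingredient to be the monotonicity and surjectivity of the extremal length of $\mathcal{F}$ along $\mathbf{G}$; everything else is a direct bookkeeping of Minsky's inequality, so this is where I would be most careful to invoke the correct scaling under the Teichm\"uller geodesic flow.
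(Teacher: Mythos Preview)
Your proof is correct and follows essentially the same approach as the paper: both directions and the uniqueness claim all rest on Minsky's inequality (Theorem \ref{GM}) together with the Teichm\"uller geodesic $\mathbf{G}_{\mathcal{F},\mathcal{G}}$, exactly as in the paper's argument. Your converse is organized as a direct contradiction (produce a point of $\mathbf{G}$ in both open horoballs when $st>i(\mathcal{F},\mathcal{G})^{2}$) rather than the paper's trichotomy on $t'$ versus the critical value, but this is a cosmetic difference; if anything your version is slightly cleaner, and your explicit justification of the surjectivity of $r\mapsto\operatorname{Ext}_{\mathbf{G}(r)}(\mathcal{F})$ via the $e^{2r}$ scaling fills in a detail the paper leaves implicit.
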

\begin{proof} ($\Longleftarrow$) Assume that $s \cdot t = i(\mathcal{F}, G)^2.$
  Let $\mathbf{G}_{\mathcal{F},\mathcal{G}}$ be the Teichm\"uller geodesic determined by
  $\mathcal{F}$ and $\mathcal{G}$.
  Since the extremal length of $\F$ is a strictly monotonic function along $\mathbf{G}_{\mathcal{F},\mathcal{G}}$, there exists a unique Riemann surface $X_{s} \in \mathbf{G}_{\mathcal{F},\mathcal{G}}$ such that $\operatorname{Ext}_{X_{s}}(\mathcal{F}) = s$. Since
  $\operatorname{Ext}_{X_{s}}(\mathcal{F})\operatorname{Ext}_{X_{s}}(\mathcal{G}) = i(\mathcal{F},\mathcal{ G})^2,$
  $\operatorname{Ext}_{X_{s}}(\mathcal{G})$  must be equal to $t$.

 By our construction, $X_{s} \in \operatorname{HS}(\mathcal{F}, s) \cap \operatorname{HS}(\mathcal{G}, t)$.

 It remains to show that   $\operatorname{HB}(\mathcal{F}, s) \cap \operatorname{HB}(\mathcal{G}, t) = \emptyset$. For any point $X \in \operatorname{HB}(\mathcal{F}, s)$, we have $\operatorname{Ext}_{X}(\mathcal{F}) < s$. Using Minsky's inequality (Theorem \ref{Minsky}),
        we obtain
    $$\operatorname{Ext}_{X}(\mathcal{G}) \geq \frac{i(\mathcal{F}, \mathcal{G})^2}{\operatorname{Ext}_{X}(\mathcal{F})} > \frac{i(\mathcal{F}, \mathcal{G})^2}{s} = t.$$
    This implies that $X \notin \operatorname{HB}(\mathcal{G}, t)$. Similarly, for any point $Y \in \operatorname{HB}(\mathcal{G}, t)$, we have $Y \notin \operatorname{HB}(\mathcal{F}, s)$.

Thus $\operatorname{HS}(\mathcal{F}, s)$ is tangent to $\operatorname{HS}(\mathcal{G}, t)$ at $X_{s}$.

\medskip

 ($\Longrightarrow$) Conversely, assume that $\operatorname{HS}(\F, s)$ and $\operatorname{HS}(\G, t)$
 are tangent to each other.
 Set $r = \frac{i(\F,\G)^2}{t}$. From the above argument, the horospheres $\operatorname{HS}(\F, r)$
 and $\operatorname{HS}(\G, t)$ are tangent to each other at $X_r$.
 We claim that $r=s$.

 In fact, if $s<r$, then the closed horoball $\overline{\operatorname{HB}}(\mathcal{F}, s)$ is contained in
 $\operatorname{HB}(\mathcal{F}, r)$.
 This implies that  $\overline{\operatorname{HB}}(\mathcal{F}, s) \cap \overline{\operatorname{HB}}(\mathcal{G}, t) = \emptyset$,
 which contradicts our assumption.

 If $s> r$, then $X_r \in \operatorname{HB}(\mathcal{G}, s)$,  which contradicts with
 the fact that $X_r\in \operatorname{HS}(\mathcal{F}, r) \cap \operatorname{HS}(\mathcal{G}, t)$.

\medskip

We have proved the equivalence.  The above proof also shows that any
$X \in \operatorname{HS}(\mathcal{F}, s) \cap \operatorname{HS}(\mathcal{G}, t)$ lies on the geodesic
$\mathbf{G}_{\mathcal{F},\mathcal{G}}$. Thus $X=X_s$ is unique.
\end{proof}

The next lemma studies the question when a triple of  horospheres are tangent to each other.
For simplicity, we only consider measured foliations corresponding to simple closed curves.
This is sufficient for application in the proof of Theorem \ref{Result four}.

A pair of simple closed curves $(\alpha, \beta)\in \mathcal{S}\times \mathcal{S}$ is \emph{filling} if $i(\alpha, \gamma) + i(\beta, \gamma) > 0$
 for all  $\gamma\in \mathcal{S}$. If $(\alpha, \beta)$ is filling, there is a unique Teichm\"uller geodesic
 determined by $\alpha$ and $\beta$. We shall denote such a geodesic by $\mathbf{G}_{\alpha,\beta}$.

\begin{lemma}\label{Dense two}
  Suppose that all the pairs $(\alpha, \beta), (\alpha, \gamma), (\beta, \gamma)$ are filling.
  Then there exist unique $s,t,r\in \mathbb{R}_+$ such that the horospheres  $\operatorname{HS}(\alpha, r)$, $\operatorname{HS}(\beta, s)$ and $\operatorname{HS}(\gamma, t)$ are tangent to each other.
\end{lemma}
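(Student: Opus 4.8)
The plan is to reduce pairwise tangency of the three horospheres to an algebraic system by means of Lemma \ref{condition of tangential}, and then to solve that system explicitly.

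First I would note that ``tangent to each other'' here means pairwise tangent, i.e. each of the three pairs of horospheres is tangent in the sense of Definition \ref{tangential}. Since $(\alpha,\beta)$, $(\alpha,\gamma)$ and $(\beta,\gamma)$ are all filling, each pair is in particular transverse and each intersection number $i(\alpha,\beta)$, $i(\alpha,\gamma)$, $i(\beta,\gamma)$ is strictly positive. Thus Lemma \ref{condition of tangential} applies to each pair separately, and the three horospheres $\operatorname{HS}(\alpha, r)$, $\operatorname{HS}(\beta, s)$, $\operatorname{HS}(\gamma, t)$ are pairwise tangent if and only if
$$ rs = i(\alpha,\beta)^2, \quad rt = i(\alpha,\gamma)^2, \quad st = i(\beta,\gamma)^2. $$

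Next I would solve this system for $r,s,t\in\mathbb{R}_+$. Multiplying the three equations yields $(rst)^2 = i(\alpha,\beta)^2\, i(\alpha,\gamma)^2\, i(\beta,\gamma)^2$, so taking the positive square root gives $rst = i(\alpha,\beta)\, i(\alpha,\gamma)\, i(\beta,\gamma)$. Dividing this by each of the three equations in turn forces
$$ r = \frac{i(\alpha,\beta)\, i(\alpha,\gamma)}{i(\beta,\gamma)}, \quad s = \frac{i(\alpha,\beta)\, i(\beta,\gamma)}{i(\alpha,\gamma)}, \quad t = \frac{i(\alpha,\gamma)\, i(\beta,\gamma)}{i(\alpha,\beta)}. $$
Since all intersection numbers are positive, these values lie in $\mathbb{R}_+$, which gives existence; and since the derivation shows they are the only positive solution, uniqueness follows at the same time. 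One should also record that, by Lemma \ref{condition of tangential}, with these values each pair is genuinely tangent (and not merely that the products match), so the constructed triple truly realizes the conclusion.

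I do not anticipate a serious obstacle here: essentially all of the geometric content has already been packaged into Lemma \ref{condition of tangential}, and what remains is elementary algebra. The only point that genuinely needs checking is that the three tangency conditions decouple correctly --- each condition involves only the two radii attached to its own pair together with the corresponding intersection number, with no additional global constraint linking all three horospheres simultaneously. Once this decoupling is confirmed, imposing all three conditions is exactly the multiplicative system above, and its unique positive solvability finishes the proof.
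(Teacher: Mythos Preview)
Your proof is correct and follows essentially the same approach as the paper: both reduce pairwise tangency to the three equations $rs=i(\alpha,\beta)^2$, $rt=i(\alpha,\gamma)^2$, $st=i(\beta,\gamma)^2$ via Lemma~\ref{condition of tangential} and then observe this system has a unique positive solution. The only cosmetic difference is that the paper parametrizes the solution geometrically by locating the unique point $X_0$ on the geodesic $\mathbf{G}_{\alpha,\beta}$ where $\operatorname{Ext}_{X_0}(\alpha)/\operatorname{Ext}_{X_0}(\beta)=i(\alpha,\gamma)^2/i(\beta,\gamma)^2$, whereas you solve the multiplicative system directly; the resulting values of $r,s,t$ are identical.
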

\begin{proof}
By Theorem \ref{Minsky},  any $X\in \mathbf{G}_{\alpha,\beta}$ must satisfy
\begin{equation*}
  {\textrm{Ext}_{X}(\alpha)\textrm{Ext}_{X}(\beta)}
  =i(\alpha,\beta)^2.
\end{equation*}
Let $$ k = \frac{i(\alpha,\gamma)^2}{i(\beta,\gamma)^2}.$$
There is a unique $X_{0}\in \mathbf{G}_{\alpha,\beta}$ such that
\begin{equation*}
 {\frac{\textrm{Ext}_{X_{0}}(\alpha)}
  {\textrm{Ext}_{X_{0}}(\beta)}}=k.
\end{equation*}
Let $r = \textrm{Ext}_{X_{0}}(\alpha), s = \textrm{Ext}_{X_{0}}(\beta)$ and
   \begin{equation*}
     t = \frac{i(\alpha,\gamma)^2}{r} = \frac{i(\beta,\gamma)^2}{s}.
   \end{equation*}
According to Lemma \ref{condition of tangential},
the horospheres  $\operatorname{HS}(\alpha, r), \operatorname{HS}(\beta, s)$ and
$\operatorname{HS}(\gamma, t)$ are tangent to each other. Moreover, the solution
$(s, t, r)$ is unique.
\end{proof}

Triples of  horospheres tangent to each other are flexible in Teichm\"uller space:

\begin{lemma}\label{Dense one}
  Let $(\alpha, \beta)\in \mathcal{S}\times \mathcal{S}$ be filling. Given any $t\in \mathbb{R}_{+}$ and $\epsilon>0$,
  there exists a simple closed curve $\gamma\in \mathcal{S}$ such that both $(\alpha, \gamma)$ and $(\beta,\gamma)$ are filling and
  $$|\frac{i(\alpha, \gamma)}{i(\beta, \gamma)}-t|<\epsilon.$$
  \end{lemma}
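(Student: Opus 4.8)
The plan is to realize the prescribed ratio by a measured foliation first and then perturb to a genuine simple closed curve while preserving both filling conditions. I would introduce the ratio function
$$\Psi \colon \mathcal{PMF} \to [0,\infty], \qquad \Psi(\mathcal{F}) = \frac{i(\alpha,\mathcal{F})}{i(\beta,\mathcal{F})},$$
which is invariant under rescaling $\mathcal{F}$ and so descends to $\mathcal{PMF}$. Since $\alpha\cup\beta$ fills the surface, no nonzero measured foliation is disjoint from both $\alpha$ and $\beta$, so $i(\alpha,\mathcal{F})+i(\beta,\mathcal{F})>0$ for every $\mathcal{F}\in\mathcal{MF}\setminus\{0\}$; hence numerator and denominator never vanish together and $\Psi$ is continuous. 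Evaluating at the projective classes of the two curves gives $\Psi([\alpha])=0$ and $\Psi([\beta])=\infty$, using $i(\alpha,\alpha)=i(\beta,\beta)=0$ and $i(\alpha,\beta)>0$. As $\mathcal{PMF}$ is a sphere of dimension $6g-7+2n\geq 1$, it is connected, so the image of $\Psi$ is a connected subset of $[0,\infty]$ containing $0$ and $\infty$, i.e. all of $[0,\infty]$. In particular $\Psi^{-1}\big((t-\epsilon,t+\epsilon)\big)$ is a nonempty open subset of $\mathcal{PMF}$.

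Next I would control the two filling conditions simultaneously. Let $G$ be the set of $\mathcal{F}\in\mathcal{PMF}$ for which both $(\alpha,\mathcal{F})$ and $(\beta,\mathcal{F})$ are filling; I claim $G$ is open and dense. For density, every uniquely ergodic foliation lies in $G$: if $\mathcal{F}$ is uniquely ergodic then $i(\mathcal{F},\mathcal{G})=0$ forces $\mathcal{G}$ to be a positive multiple of $\mathcal{F}$, and then $i(\alpha,\mathcal{G})=c\,i(\alpha,\mathcal{F})>0$ because a minimal foliation meets every essential curve; thus $i(\alpha,\mathcal{G})+i(\mathcal{F},\mathcal{G})>0$ for all $\mathcal{G}\neq 0$, and likewise with $\beta$. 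Since uniquely ergodic foliations are dense in $\mathcal{PMF}$, so is $G$. For openness I would normalize representatives by fixing $\mathcal{MF}_1=\{\mathcal{F}:\operatorname{Ext}_{X_0}(\mathcal{F})=1\}\cong\mathcal{PMF}$ and set
$$\Phi_\alpha(\mathcal{F})=\min_{\mathcal{G}\in\mathcal{MF}_1}\big(i(\alpha,\mathcal{G})+i(\mathcal{F},\mathcal{G})\big).$$
The minimum is attained on the compact set $\mathcal{MF}_1$ and depends continuously on $\mathcal{F}$; by homogeneity of $i$ in its second slot, $(\alpha,\mathcal{F})$ fills exactly when $\Phi_\alpha(\mathcal{F})>0$, so the filling locus of $(\alpha,\cdot)$ is open, and intersecting with the analogous locus for $\beta$ shows $G$ is open.

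Finally I would combine the two ingredients. The set $\Psi^{-1}\big((t-\epsilon,t+\epsilon)\big)$ is nonempty and open, while $G$ is open and dense, so their intersection $U$ is a nonempty open subset of $\mathcal{PMF}$. Because $\mathcal{S}$ is dense in $\mathcal{PMF}$, there is a simple closed curve $\gamma\in\mathcal{S}\cap U$. By construction $(\alpha,\gamma)$ and $(\beta,\gamma)$ are both filling, and
$$\Big|\frac{i(\alpha,\gamma)}{i(\beta,\gamma)}-t\Big|=|\Psi(\gamma)-t|<\epsilon,$$
as required.

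The step I expect to be the main obstacle is showing that $G$ is open, that is, that filling is an open condition: one must upgrade the pointwise positivity defining "filling" to a uniform positive lower bound over the compact space of test foliations (handled above by the continuity of the min-function $\Phi_\alpha$ on $\mathcal{MF}_1$). This openness is exactly what lets me pass from a uniquely ergodic point of $G$ with the right ratio to a nearby \emph{simple closed curve} still lying in $G$. The secondary delicate point is the continuity of $\Psi$, which quietly relies both on the topological fact that $\alpha\cup\beta$ filling forbids a nonzero foliation disjoint from both and on the continuous extension of the intersection number from $\mathcal{S}$ to all of $\mathcal{MF}$.
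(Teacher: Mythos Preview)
Your proof is correct and follows essentially the same route as the paper: define the ratio map on $\mathcal{PMF}$, use connectedness to hit every value, pass through uniquely ergodic foliations, and then approximate by a simple closed curve. You are in fact more explicit than the paper about why the filling condition is open (via your min-function $\Phi_\alpha$ on the compact slice $\mathcal{MF}_1$), whereas the paper simply asserts that a curve sufficiently close to a uniquely ergodic $[\mathcal{F}]$ remains filling with $\alpha$ and $\beta$.
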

\begin{proof}
It is not hard to show that the map  $$\frac{i(\alpha, \cdot)}{i(\beta, \cdot)} : \mathcal{PMF}\to \mathbb{R}_{\geq 0} \cup \{+\infty\}$$ is continuous. Since $\mathcal{PMF}$ is path-connected and $$\frac{i(\alpha, \alpha)}{i(\beta, \alpha)}=0,
  \frac{i(\alpha, \beta)}{i(\beta, \beta)}=+\infty,$$
  we have for any $t\in \mathbb{R}_{+}$,
  there is a measured foliation $\mathcal{F}$ such that
 $$ \frac{i(\alpha, \mathcal{F})}{i(\beta, \mathcal{F})}=t.$$ By the density of uniquely ergodic measured foliations,
  we may assume that $\mathcal{F}$ is uniquely ergodic and
  $$|\frac{i(\alpha, \mathcal{F})}{i(\beta, \mathcal{F})}-t|< \frac{\epsilon}{2}.$$

It is obvious that $(\alpha, \mathcal{F})$ (and also $(\beta, \mathcal{F})$) are transverse.
If we choose $\gamma\in \mathcal{S}$ sufficiently close to $[\mathcal{F}]$ in $\mathcal{PMF}$,
then both $(\alpha, \gamma), (\beta,\gamma)$ are filling and
  $$|\frac{i(\alpha, \gamma)}{i(\beta, \gamma)}-t|<\epsilon.$$
\end{proof}

\section{Horosphere-preserving diffeomorphisms}\label{sec:diff}
This section contains the proof of Proposition \ref{Result two} and Theorem \ref{Result four},
as stated in \S \ref{sec:into}.
 Throughout this section,
 $f: \mathcal{T}_{g,n} \rightarrow \mathcal{T}_{g,n}$ will denote a diffeomorphism that preserves horospheres.

\subsection{Proof of Proposition \ref{Result two}}
We first prove:
\begin{lemma}\label{Inverse_is_true}
The inverse map $f^{-1}$ also preserves horospheres.
\end{lemma}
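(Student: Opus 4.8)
The plan is to show that $f$ acts \emph{surjectively} on the collection of all horospheres; since $f$ is a bijection of $\T$, this immediately yields that $f^{-1}$ sends horospheres to horospheres. Because every horosphere passes through at least one point, it suffices to fix an arbitrary $X\in\T$, set $Y=f(X)$, and prove that every horosphere through $Y$ is the $f$-image of a horosphere through $X$. Horospheres through a fixed point are naturally parametrized by $\PMF$: for $[\F]\in\PMF$ the set $\operatorname{HS}(\F,X)$ depends only on the projective class (Remark \ref{coincide}), and by Corollary \ref{coro:3.5} distinct classes give distinct horospheres.

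Since $f$ preserves horospheres and $f(X)=Y$, for each $[\F]$ we have $f[\operatorname{HS}(\F,X)]=\operatorname{HS}(\Psi(\F),Y)$ for a unique class $\Psi(\F)\in\PMF$; this defines a map $\Psi:\PMF\to\PMF$. First I would check that $\Psi$ is injective: if $\Psi(\F_1)=\Psi(\F_2)$ then the two horospheres through $Y$ coincide, hence so do their $f$-preimages $\operatorname{HS}(\F_i,X)$, and Corollary \ref{coro:3.5} forces $[\F_1]=[\F_2]$. The heart of the matter is to promote injectivity to surjectivity using the $C^1$ hypothesis. The derivative $df_X:T_X\to T_Y$ is a linear isomorphism and induces a bijection $\widehat{df_X}:\mathrm{Gr}(T_X)\to\mathrm{Gr}(T_Y)$ on Grassmannians of hyperplanes. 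Since $f$ carries the $C^1$-hypersurface $\operatorname{HS}(\F,X)$ onto $\operatorname{HS}(\Psi(\F),Y)$, its derivative carries the tangent hyperplane of one onto that of the other, giving the commuting relation $\widehat{df_X}\circ T^X=T^Y\circ\Psi$, where $T^X,T^Y:\PMF\to\mathrm{Gr}(\cdot)$ are the tangent-space maps of \S\ref{HaH} at $X$ and $Y$ respectively.

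The two structural inputs are that, by Lemma \ref{lemma:gr}, each of $T^X,T^Y$ is surjective, and by Lemma \ref{necessary condition of tangent}, each is exactly two-to-one: the fiber over $V$ is the pair $\{[\F_v(q)],[\F_h(q)]\}$ for the quadratic differential $q$ with $T(\F_v(q))=V$, and these two classes are always distinct because vertical and horizontal foliations are transverse. Feeding the surjectivity of $\widehat{df_X}\circ T^X$ through the commuting relation shows that $T^Y\circ\Psi$ is surjective onto $\mathrm{Gr}(T_Y)$.

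The main obstacle is exactly here: surjectivity of $T^Y\circ\Psi$ only guarantees that $\Psi$ meets \emph{at least one} point of each two-element fiber of $T^Y$, not both, so it does not by itself give surjectivity of $\Psi$ onto $\PMF$. I would close this gap combinatorially. Suppose some $[\G_0]\notin\operatorname{image}(\Psi)$, and let $V_0=T^Y(\G_0)$ with conjugate partner $[\G_0']$ in the same fiber. Surjectivity of $T^Y\circ\Psi$ forces $[\G_0']=\Psi(\F_0)$ for some $\F_0$; letting $\F_0'$ denote the conjugate of $\F_0$ (the other $T^X$-preimage of $T^X(\F_0)$) and applying the commuting relation gives $T^Y(\Psi(\F_0'))=\widehat{df_X}(T^X(\F_0'))=V_0$, so $\Psi(\F_0')\in\{[\G_0],[\G_0']\}$; injectivity of $\Psi$ then forces $\Psi(\F_0')=[\G_0]$, contradicting the assumption. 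Hence $\Psi$ is surjective, every horosphere through $Y$ lies in the image of $f$, and $f^{-1}$ preserves horospheres. (As an alternative to this combinatorial step one could instead verify directly that $\Psi$ is continuous, using convergence of extremal-length functions together with Corollary \ref{coro:3.5}, and then invoke invariance of domain on the sphere $\PMF$; but it is the derivative argument that makes essential use of the $C^1$ assumption, as advertised in the remark preceding Theorem \ref{Result four}.)
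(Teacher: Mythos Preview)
Your proof is correct and follows essentially the same approach as the paper: both arguments use the derivative $df_X$ to transport tangent hyperplanes, invoke Lemma~\ref{lemma:gr} for surjectivity of $T$, and use the two-to-one structure of Lemma~\ref{necessary condition of tangent} to conclude. The paper's version is slightly more direct---given a horosphere through $Y$ with tangent hyperplane $W$, it pulls $W$ back to $V\subset T_X$, pushes forward the two horospheres through $X$ tangent to $V$, and observes that one of the images must be the given horosphere---whereas your packaging via $\Psi$ and the fiber-counting contradiction simply makes the implicit use of injectivity (Corollary~\ref{coro:3.5}) explicit.
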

\begin{proof}
Assume that $f(X)=Y$.
Let $W=\langle \mu_{1}, ..., \mu_{6g - 7 + 2n}\rangle$ be the tangent space of $\operatorname{HS}(\mathcal{F}, Y)$ at $Y$.
 Then the pull-back of $W$ by $f$
  $$ V=\langle f^{*}(\mu_{1}), ..., f^{*}(\mu_{6g - 7 + 2n})\rangle $$
  is a linear subspace of $T_{X}\mathcal{T}_{g,n}$ of
   dimension $6g - 7 + 2n$.

There is a quadratic differential $q \in Q(X)$ such that $V$ is the tangent space of $\operatorname{HS}(\mathcal{F}_{v}(q), X)$ and $\operatorname{HS}(\mathcal{F}_{h}(q), X)$ at $X$ (see Lemma \ref{lemma:gr}).
Since $f$ preserves horospheres, it maps $\operatorname{HS}(\mathcal{F}_{v}(q), X)$ and $\operatorname{HS}(\mathcal{F}_{h}(q), X)$ to two horospheres, denoted by
$\operatorname{HS}(\mathcal{F}',Y)$ and $\operatorname{HS}(\G', Y)$.

Note that $\operatorname{HS}(\mathcal{F}',Y)$ and $\operatorname{HS}(\G', Y)$ are tangent to each other at
$Y$, and their tangent space at $Y$ is equal to $W$.
According to Lemma \ref{necessary condition of tangent}, we have
  $\mathcal{F} = k \mathcal{F}'$ or $\mathcal{F} = k \G'$
  for some $k \in \mathbb{R}_{+}$. Thus (see Remark \ref{coincide})
  $\operatorname{HS}(\mathcal{F}, Y) = f[\operatorname{HS}(\mathcal{F}_{v}(q), X)]$
  or
  $f[\operatorname{HS}(\mathcal{F}_{h}(q), X)].$
  The proof is complete.
\end{proof}

\begin{remark}
The $C^1$-smoothness of $f$ is used in the proof of Lemma \ref{Inverse_is_true}.
If we assume that $f: \mathcal{T}_{g,n} \rightarrow \mathcal{T}_{g,n}$ is a homeomorphism and
both $f, f^{-1}$  preserve horospheres, then the results in this section are still valid.
\end{remark}

\begin{corollary}\label{MHTH}
The map $f$ preserves horoballs.
\end{corollary}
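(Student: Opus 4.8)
First I would set up the topological reduction. Since $f$ is a $C^1$-diffeomorphism it is in particular a homeomorphism of $\mathcal{T}_{g,n}$, and by Lemma \ref{Inverse_is_true} both $f$ and $f^{-1}$ carry horospheres to horospheres. Because $X\mapsto \operatorname{Ext}_X(\mathcal{F})$ is continuous and strictly monotone along the Teichm\"uller geodesic perpendicular to the family $\operatorname{HS}(\mathcal{F},\cdot)$ (Lemma \ref{Equidistant}), the open horoball $\operatorname{HB}(\mathcal{F},s)$ and the open exterior $\{X:\operatorname{Ext}_X(\mathcal{F})>s\}$ are exactly the two connected components of the complement of the bounding horosphere $\operatorname{HS}(\mathcal{F},s)$, and $\operatorname{HS}(\mathcal{F},s)=\partial\operatorname{HB}(\mathcal{F},s)$. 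Applying the homeomorphism $f$, the set $f[\operatorname{HB}(\mathcal{F},s)]$ is open and connected with topological boundary the horosphere $f[\operatorname{HS}(\mathcal{F},s)]=\operatorname{HS}(\mathcal{G},t)$; hence it is one of the two complementary components, i.e. either the horoball $\operatorname{HB}(\mathcal{G},t)$ or the exterior $E(\mathcal{G},t):=\{X:\operatorname{Ext}_X(\mathcal{G})>t\}$. The entire content of the corollary is to exclude the exterior, which is where the real work lies.

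The geometric input I would isolate is that \emph{any two horosphere-exteriors must intersect}. Given $\mathcal{G}_1,\mathcal{G}_2\in\mathcal{MF}$ and $t_1,t_2>0$, choose a uniquely ergodic $\mathcal{J}$ with $i(\mathcal{J},\mathcal{G}_1)>0$ and $i(\mathcal{J},\mathcal{G}_2)>0$, and move along the Teichm\"uller ray whose vertical foliation is $\mathcal{J}$ in the direction where $\operatorname{Ext}(\mathcal{J})\to 0$. By Minsky's inequality (Theorem \ref{GM}) one has $\operatorname{Ext}_{X}(\mathcal{G}_i)\ge i(\mathcal{J},\mathcal{G}_i)^2/\operatorname{Ext}_{X}(\mathcal{J})\to\infty$ for $i=1,2$, so far enough out the point lies in $E(\mathcal{G}_1,t_1)\cap E(\mathcal{G}_2,t_2)$. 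Thus two exterior regions are never disjoint.

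I would then resolve the co-orientation using tangency. Assume for contradiction that $f[\operatorname{HB}(\mathcal{F},s)]=E(\mathcal{G},t)$. Pick a simple closed curve $\beta$ transverse to $\mathcal{F}$ and put $u=i(\mathcal{F},\beta)^2/s$, so by Lemma \ref{condition of tangential} the horospheres $\operatorname{HS}(\mathcal{F},s)$ and $\operatorname{HS}(\beta,u)$ are tangent, with disjoint open horoballs meeting at a single point. Since $\beta$ is indecomposable, Proposition \ref{Result two} gives $f[\operatorname{HS}(\beta,\cdot)]=\operatorname{HS}(\mathcal{G}_\beta,\cdot)$ with $\mathcal{G}_\beta$ indecomposable; moreover $f[\operatorname{HB}(\beta,u)]$ is itself a horoball $\operatorname{HB}(\mathcal{G}_\beta,u')$. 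Here I would first establish the simple-closed-curve case: for three curves that pairwise fill, Lemma \ref{Dense two} realizes their horospheres as pairwise tangent, and since two exteriors always meet, at most one of the three image regions is an exterior; were exactly one an exterior, the other two (indecomposable horoballs) would lie in its closed complementary horoball, so Proposition \ref{Main-Lemma-One} would force a proportionality that, via Remark \ref{coincide}, makes two of the image horospheres equal or disjoint, contradicting their single-point intersection. Hence all three are horoballs, and the monotone reparametrization supplied by Proposition \ref{Result two}(1) propagates this to every parameter, so every simple-closed-curve horoball maps to a horoball. Returning to the general $\mathcal{F}$: disjointness of $E(\mathcal{G},t)$ and $\operatorname{HB}(\mathcal{G}_\beta,u')$ gives $\operatorname{HB}(\mathcal{G}_\beta,u')\subset\overline{\operatorname{HB}}(\mathcal{G},t)$, whence Proposition \ref{Main-Lemma-One} forces $\mathcal{G}=k\mathcal{G}_\beta$; but then $\operatorname{HS}(\mathcal{G},t)$ and $\operatorname{HS}(\mathcal{G}_\beta,u')$ are level sets of the same extremal-length function, so equal or disjoint, contradicting that their $f$-images meet in exactly one point. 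Therefore $f[\operatorname{HB}(\mathcal{F},s)]$ is a horoball.

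The main obstacle is precisely the dichotomy of the first paragraph: a homeomorphism preserving a separating horosphere could a priori interchange a horoball with its exterior, and nothing purely topological rules this out. I expect the decisive step to be the interaction between the soft metric fact that any two exteriors intersect (second paragraph) and the rigidity of nested indecomposable horoballs from Proposition \ref{Main-Lemma-One}; the one point requiring care is the simple-closed-curve base case, which must be secured by the three-fold tangency argument before the general reduction can be run.
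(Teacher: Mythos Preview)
Your argument is circular. You invoke Proposition~\ref{Result two} (both parts) to conclude that the image of a simple-closed-curve horosphere is associated with an indecomposable foliation and that the family $f[\operatorname{HS}(\beta,\cdot)]$ is a reparametrized family $\operatorname{HS}(\mathcal{G}_\beta,\cdot)$. But in the paper's logical order, the proof of Proposition~\ref{Result two} \emph{uses} Corollary~\ref{MHTH}: part~(1) applies ``By Corollary~\ref{MHTH}'' to convert inclusions of horoballs through $f^{-1}$, and part~(2) does the same. So you are assuming the result you are trying to prove. The same circularity infects your three-curve base case: you write ``the other two (indecomposable horoballs)'', but indecomposability of the image foliations is exactly Proposition~\ref{Result two}(2), and without it you cannot apply Proposition~\ref{Main-Lemma-One}, whose hypothesis is that the \emph{inner} horoball comes from an indecomposable foliation.

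The paper's proof avoids all of this by exploiting the $C^1$ hypothesis directly, in a single local step. Given $X\in\operatorname{HS}(\mathcal{F},s)$, take the Hubbard--Masur differential $q$ with $\mathcal{F}_h(q)=\mathcal{F}$; then $\operatorname{HS}(\mathcal{F},X)$ and $\operatorname{HS}(\mathcal{F}_v(q),X)$ share the same tangent hyperplane at $X$ (Lemma~\ref{necessary condition of tangent}) and are tangent in the sense of Definition~\ref{tangential} (Lemma~\ref{condition of tangential}). Since $f$ is a $C^1$-diffeomorphism, the images are two distinct horospheres through $f(X)$ sharing a tangent hyperplane; by Lemma~\ref{necessary condition of tangent} again they are the horizontal/vertical pair of some $q'\in Q(f(X))$, hence tangent in the sense of Definition~\ref{tangential}. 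Thus $f[\operatorname{HS}(\mathcal{F}_v(q),X)]$ lies in the closed complement of $\operatorname{HB}(\mathcal{G},t)$. But $\operatorname{HS}(\mathcal{F}_v(q),X)\setminus\{X\}$ is a connected subset of the open exterior of $\operatorname{HB}(\mathcal{F},s)$, so that exterior maps to the exterior of $\operatorname{HB}(\mathcal{G},t)$, and the horoball maps to the horoball. No global tangency configurations, no appeal to indecomposability, and no forward references are needed.
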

\begin{proof}
  Let $\operatorname{HS}(\mathcal{F}, s)$ be a horosphere.
  Assume that
$\operatorname{HS}(\mathcal{G}, t) = f[\operatorname{HS}(\mathcal{F}, s)].$
Note that $\T$ is separated by  $\operatorname{HS}(\mathcal{G}, t)$  into $\operatorname{HB}(\G, t)$ and $\mathcal{T}_{g,n}/\overline{\operatorname{HB}}(\G, t)$. We claim that
 $$ f[\operatorname{HB}(\mathcal{F}, s)] = \operatorname{HB}(\mathcal{G}, t), f[\mathcal{T}_{g,n}/\operatorname{HB}(\mathcal{F}, s)] = \mathcal{T}_{g,n}/\operatorname{HB}(\mathcal{G}, t).$$

Choose any $X \in \operatorname{HS}(\mathcal{F}, s)$.
There is a unique quadratic differential $q \in Q(X)$ such that $\mathcal{F}_{h}(q) = \mathcal{F}$.
  According to Lemma \ref{tangential}, we know that $\operatorname{HS}(\mathcal{F}, X)$ is tangent to $\operatorname{HS}(\mathcal{F}_{v}(q), X)$ at $X$.
  This implies that $f[\operatorname{HS}(\mathcal{F}, X)]$ is tangent to $f[\operatorname{HS}(\mathcal{F}_{v}(q), X)]$ at $f(X)$.
  In particular, $$f[\operatorname{HS}(\mathcal{F}_{v}(q), X)] \subset \mathcal{T}_{g,n}/\operatorname{HB}(\mathcal{G}, t).$$
  Since $\operatorname{HS}(\mathcal{F}_{v}(q), X) \subset \mathcal{T}_{g,n}/\operatorname{HB}(\mathcal{F}, s)$, we have $$f[\mathcal{T}_{g,n}/\operatorname{HB}(\mathcal{F}, s)] = \mathcal{T}_{g,n}/\operatorname{HB}(\mathcal{G}, t).$$

  The proof is complete.
\end{proof}

Now we prove  Proposition \ref{Result two}:

\medskip

 \emph{Let $f: \mathcal{T}_{g,n}\to \mathcal{T}_{g,n}$ be a diffeomorphism that preserves horospheres.
  Assume that $\mathcal{F}\in \mathcal{MF}_{ind}$  and $f[\operatorname{HS}(\mathcal{F}, X)] = \operatorname{HS}(\mathcal{G}, Y)$. Then
  \begin{enumerate}
    \item $ f[\operatorname{HS}(\mathcal{F}, Z)] = \operatorname{HS}(\mathcal{G}, f(Z))$ for all $Z \in \mathcal{T}_{g,n}$;
    \item $\mathcal{G}\in \mathcal{MF}_{ind}$.
  \end{enumerate}}

\begin{proof}[Proof of Proposition \ref{Result two}]

 (1)  Let $Z\in \T$. By Lemma \ref{Inverse_is_true}, we can assume that $$f^{-1}[\operatorname{HS}(\mathcal{G}, f(Z))] =
       \operatorname{HS}(\mathcal{F}', Z).$$

       Consider the case that $\operatorname{Ext}_{Y}(\mathcal{G}) \leq \operatorname{Ext}_{f(Z)}(\mathcal{G})$.
       By  Corollary \ref{MHTH}, we have
       $$f^{-1}[\operatorname{HB}(\mathcal{G},Y)]
       =\operatorname{HB}(\mathcal{F}, X), f^{-1}[\operatorname{HB}(\mathcal{G}, f(Z))]
       = \operatorname{HB}(\mathcal{F}', Z).$$
       Since $\operatorname{HB}(\mathcal{G}, Y)
       \subset \operatorname{HB}(\mathcal{G}, f(Z))$,
       we have
       $ \operatorname{HB}(\mathcal{F}, X) \subset \operatorname{HB}(\mathcal{F}', Z)$.
      It follows from Proposition \ref{Main-Lemma-One} that $\mathcal{F}' = k\mathcal{F}$ for some $k \in \mathbb{R}_{+}$. It follows that
       \begin{equation*}
         \operatorname{HS}(\mathcal{F}, Z) = \operatorname{HS}(\mathcal{F}', Z)\ \ \textrm{and}\ \
         f[\operatorname{HS}(\mathcal{F}, Z)] =  \operatorname{HS}(\mathcal{G}, f(Z)).
       \end{equation*}

       The case that $\operatorname{Ext}_{Y}(\mathcal{G}) \geq \operatorname{Ext}_{f(Z)}(\mathcal{G})$ can be proved in the same way.

\bigskip

  (2) Suppose not,  $\mathcal{G}$ has more than one indecomposable component.
  Let $\mathcal{G}_0$ be an indecomposable component of ${\mathcal{G}}$.
  According to Corollary \ref{Relations},
  there exists $Y_0 \in \mathcal{T}_{g,n}$ such that
    $$ \operatorname{HB}(\mathcal{G}, Y) \subset \operatorname{HB}(\mathcal{G}_0, Y_0).$$
  Assume that $f^{-1}[\operatorname{HB}(\mathcal{G}_0, Y_0)] = \operatorname{HB}(\F_0, X_0)$, where $X_0 = f^{-1}(Y_0)$.  Then we have
    $$ \operatorname{HB}(\mathcal{F}, X) \subset \operatorname{HB}(\F_0, X_0).$$
  Applying Proposition \ref{Main-Lemma-One} to $\mathcal{F}$, which is assumed to be indecomposable,
  we have $\F_0 = k \cdot \mathcal{F}$ for some $k \in \mathbb{R}_{+}$. This implies that
    $$ \operatorname{HS}(\mathcal{G}, Y_0) = f[\operatorname{HS}(\F_0, X_0)] = \operatorname{HS}(\mathcal{G}_0, Y_0).$$
  This leads to a contradiction, since $\mathcal{G}_0 \neq k \mathcal{G}$ for any $k \in \mathbb{R}_{+}$.
\end{proof}

\begin{corollary}\label{bijection}
   Let $\mathcal{F} \in \mathcal{MF}_{ind}$ and  $s \in \mathbb{R}_{+}$. Then there exists $\mathcal{G} \in \mathcal{MF}_{ind}$ such that $f[\operatorname{HS}(\mathcal{F}, s)] = \operatorname{HS}(\mathcal{G}, t(s))$, where $t(s)$ is a strictly monotonically increasing function of $s$.
\end{corollary}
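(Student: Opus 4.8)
The plan is to read off the existence of $\mathcal{G}$ directly from Proposition \ref{Result two} and then to extract the function $t(s)$ together with its monotonicity from the preservation of horoballs. First I would translate between the two notations for horospheres: since $\operatorname{HS}(\mathcal{F}, s) = \operatorname{HS}(\mathcal{F}, X)$ whenever $\operatorname{Ext}_{X}(\mathcal{F}) = s$, I pick any $X$ with $\operatorname{Ext}_{X}(\mathcal{F}) = s$. Because $f$ preserves horospheres, $f[\operatorname{HS}(\mathcal{F}, X)]$ is again a horosphere, which I write as $\operatorname{HS}(\mathcal{G}, f(X))$ for some $\mathcal{G} \in \mathcal{MF}$ (note that $f(X)$ lies on it). Proposition \ref{Result two}(2) then yields $\mathcal{G} \in \mathcal{MF}_{ind}$, which settles existence.

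Next I would argue that a single projective class $\mathcal{G}$ serves all levels $s$ at once, so that $t$ is genuinely a function of $s$. This is exactly the content of Proposition \ref{Result two}(1): for every $Z \in \mathcal{T}_{g,n}$ one has $f[\operatorname{HS}(\mathcal{F}, Z)] = \operatorname{HS}(\mathcal{G}, f(Z))$ with the \emph{same} $\mathcal{G}$. Fixing one representative $\mathcal{G}$ of this projective class, I define $t(s) := \operatorname{Ext}_{f(X)}(\mathcal{G})$ for any $X \in \operatorname{HS}(\mathcal{F}, s)$. Well-definedness is immediate: if $X, X'$ both satisfy $\operatorname{Ext}_{X}(\mathcal{F}) = \operatorname{Ext}_{X'}(\mathcal{F}) = s$, they lie on the common horosphere $\operatorname{HS}(\mathcal{F}, s)$, whose image is a single horosphere, so $\operatorname{HS}(\mathcal{G}, f(X)) = \operatorname{HS}(\mathcal{G}, f(X'))$ forces $\operatorname{Ext}_{f(X)}(\mathcal{G}) = \operatorname{Ext}_{f(X')}(\mathcal{G})$. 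By construction $f[\operatorname{HS}(\mathcal{F}, s)] = \operatorname{HS}(\mathcal{G}, t(s))$; replacing $\mathcal{G}$ by $k\mathcal{G}$ only rescales $t$ by $k^{2}$ (see Remark \ref{coincide}), so the qualitative statement is independent of the chosen representative.

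For the monotonicity I would exploit the nesting of the horoballs of a fixed foliation together with Corollary \ref{MHTH}. For $0 < s_{1} < s_{2}$ the open horoballs satisfy the strict inclusion $\operatorname{HB}(\mathcal{F}, s_{1}) \subsetneq \operatorname{HB}(\mathcal{F}, s_{2})$. By Corollary \ref{MHTH}, $f$ carries the open horoball bounded by each $\operatorname{HS}(\mathcal{F}, s_{i})$ onto the open horoball bounded by its image; applying the bijection $f$ therefore preserves this strict inclusion, giving $\operatorname{HB}(\mathcal{G}, t(s_{1})) \subsetneq \operatorname{HB}(\mathcal{G}, t(s_{2}))$. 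Since these two horoballs belong to the same foliation $\mathcal{G}$, a strict inclusion is possible only when $t(s_{1}) < t(s_{2})$ (if $t(s_{1}) \geq t(s_{2})$ the inclusion would reverse), so $t$ is strictly increasing. I expect the only delicate point to be the bookkeeping that $f$ respects the inclusion of horoballs rather than merely sending horoballs to horoballs; but this is precisely what Corollary \ref{MHTH} guarantees, since there $f$ is shown to map the open horoball and the complement of the closed horoball to the corresponding regions of the image horosphere, and a bijection preserves strict set inclusions.
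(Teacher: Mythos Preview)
Your proof is correct and is precisely the argument the paper intends: the corollary is stated without proof immediately after Proposition~\ref{Result two} and Corollary~\ref{MHTH}, and you have spelled out exactly how existence of $\mathcal{G}\in\mathcal{MF}_{ind}$ and independence from $s$ come from Proposition~\ref{Result two}, while strict monotonicity of $t(s)$ follows from the horoball preservation in Corollary~\ref{MHTH} applied to the strict nesting $\operatorname{HB}(\mathcal{F},s_1)\subsetneq\operatorname{HB}(\mathcal{F},s_2)$.
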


\subsection{The map $f$ induces an automorphism of $\mathcal{C}(\mathcal{S})$}
As above, let $$f: \mathcal{T}_{g,n} \rightarrow \mathcal{T}_{g,n}$$ be a diffeomorphism that preserves horospheres.
It follows from Proposition \ref{Result two} and Remark \ref{coincide} that $f$ induces a bijection
$$f_{*}: \mathcal{MF}_{ind} \rightarrow \mathcal{MF}_{ind}.$$
Moreover, $f_*$ maps projective equivalence classes to projective equivalence classes.

Denote by $\mathcal{UMF}$ be the set of uniquely ergodic measured foliations. It is clear that $\mathcal{S}$ and $\mathcal{UMF}$ are contained in $\mathcal{MF}_{ind}$.

  For $\mathcal{F} \in \mathcal{MF}_{ind}$, we denote
    $$\mathcal{N}(\mathcal{F}) = \{\mathcal{G} \in \mathcal{MF}_{ind}\ |\ i(\mathcal{F}, \mathcal{G}) = 0\}.$$

 Two measured foliations $\mathcal{F}$ and $\mathcal{G}$ are \emph{topologically equivalent}
if they (considered without their transverse measures) are isotopic up to Whitehead moves.
\begin{lemma}\label{topological equivalent}
  Let $\mathcal{F},\mathcal{G} \in \mathcal{MF}_{ind}$. Then $\mathcal{N}(\mathcal{F}) = \mathcal{N}(G)$ if and only if $\mathcal{F}$ and $\mathcal{G}$ are topologically equivalent. We have $\mathcal{N}(\mathcal{F}) = \{k \cdot \mathcal{F} \ | \ k\in \mathbb{R}_+\}$ if and only if $\mathcal{F} \in \mathcal{UMF}$.
\end{lemma}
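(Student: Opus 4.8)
The plan is to treat the two equivalences separately, reducing both to the single principle that the vanishing locus of the intersection pairing records the supporting subsurface of an indecomposable foliation. Throughout I write $\mathrm{supp}(\F)$ for the isotopy class of the subsurface filled by an indecomposable $\F$: an annular neighbourhood of the core when $\F$ is a weighted simple closed curve, and the minimal subsurface carrying $\F$ otherwise. I will lean on three standard facts from the theory of measured foliations: (i) $i(\F,\F)=0$; (ii) since $\F$ fills $\mathrm{supp}(\F)$, one has $i(\F,\mathcal{H})=0$ precisely when $\mathcal{H}$ can be realised disjointly from $\F$, that is, when $\mathcal{H}$ is carried by $\overline{S\setminus\mathrm{supp}(\F)}$ together with $\partial\,\mathrm{supp}(\F)$, or else $\mathcal{H}$ is topologically equivalent to $\F$; and (iii) if $\F$ fills $\mathrm{supp}(\F)$ and $\mathcal{H}$ is carried by $\mathrm{supp}(\F)$ with $i(\F,\mathcal{H})=0$, then $\mathcal{H}$ is topologically equivalent to $\F$. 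In particular, whether $i(\F,\mathcal{H})$ vanishes depends only on the topological type of $\F$.

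For the first equivalence, the backward direction is immediate: if $\F$ and $\G$ are topologically equivalent then $\mathrm{supp}(\F)=\mathrm{supp}(\G)$, and since vanishing of $i(\cdot,\mathcal{H})$ depends only on the topological type of the first argument, $\mathcal{N}(\F)=\mathcal{N}(\G)$. For the forward direction, I assume $\mathcal{N}(\F)=\mathcal{N}(\G)$. By (i), $\F\in\mathcal{N}(\F)=\mathcal{N}(\G)$, so $i(\F,\G)=0$ and the two foliations can be realised disjointly. I then intersect the neighbourhood sets with $\mathcal{S}$: the simple closed curves disjoint from $\F$ are exactly those isotopic into $\overline{S\setminus\mathrm{supp}(\F)}$, and I claim this collection determines $\mathrm{supp}(\F)$ up to isotopy. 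Granting the claim, $\mathcal{N}(\F)\cap\mathcal{S}=\mathcal{N}(\G)\cap\mathcal{S}$ forces $\mathrm{supp}(\F)=\mathrm{supp}(\G)$; since $i(\F,\G)=0$ and both foliations fill this common subsurface, fact (iii) shows that $\F$ and $\G$ are topologically equivalent.

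For the second equivalence, recall that $\F\in\mathcal{UMF}$ means $\F$ is minimal, hence fills all of $S$, and carries a unique transverse measure up to scale. If $\F\in\mathcal{UMF}$, then $\mathrm{supp}(\F)=S$ has empty complement, so by (ii) every $\mathcal{H}\in\mathcal{N}(\F)$ is topologically equivalent to $\F$, and unique ergodicity forces $\mathcal{H}=k\F$; thus $\mathcal{N}(\F)=\{k\cdot\F\mid k\in\mathbb{R}_+\}$. Conversely, suppose $\mathcal{N}(\F)=\{k\cdot\F\mid k\in\mathbb{R}_+\}$. If $\mathrm{supp}(\F)$ were a proper subsurface, then because $3g-3+n\geq 2$ there would be an essential, non-peripheral simple closed curve $\delta$ disjoint from $\F$ and not isotopic to $\F$ (a suitable boundary curve of $\mathrm{supp}(\F)$, or a disjoint non-isotopic curve when $\F$ is itself a curve); such a $\delta$ lies in $\mathcal{N}(\F)$ but is not a multiple of $\F$, a contradiction. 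Hence $\F$ fills $S$, so by (ii) $\mathcal{N}(\F)$ consists exactly of the indecomposable foliations topologically equivalent to $\F$, and the hypothesis that these are all multiples of $\F$ is precisely unique ergodicity, giving $\F\in\mathcal{UMF}$.

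The step I expect to be most delicate is the claim, used in the forward direction of the first equivalence, that $\mathcal{N}(\F)\cap\mathcal{S}$ pins down $\mathrm{supp}(\F)$: one must verify that two filling subsurfaces carrying the same collection of disjoint simple closed curves must coincide, and this requires genuine care when the two supports are nested, or when one of $\F,\G$ is a simple closed curve so that its support is merely an annulus. The hypothesis $3g-3+n\geq 2$ is exactly what guarantees enough room in the complements to exhibit a distinguishing curve whenever the supports differ; combining this topological input with fact (iii) is the technical heart of the argument, whereas the remaining reductions are formal.
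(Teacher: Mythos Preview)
The paper does not actually prove this lemma: immediately after the statement it simply records that it ``was proved by \cite[Theorem 4.1]{ivanov2001isometries}.'' So there is no argument in the paper to compare against, and your proposal supplies a direct proof where the authors defer to the literature.

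Your argument is essentially correct. One remark on the step you flag as delicate: once you have established $i(\F,\G)=0$ from $\F\in\mathcal{N}(\F)=\mathcal{N}(\G)$, the nested-support case you worry about cannot occur. Since each of $\F,\G$ fills its own support and they can be realised disjointly, $\mathrm{supp}(\F)$ and $\mathrm{supp}(\G)$ are either equal or disjoint (possibly sharing boundary). In the disjoint case, any essential curve in the interior of $\mathrm{supp}(\F)$ lies in $\mathcal{N}(\G)\cap\mathcal{S}$ but not in $\mathcal{N}(\F)\cap\mathcal{S}$; when $\F$ is a weighted curve $\alpha$, the curve $\alpha$ itself plays this role, since $\alpha\in\mathcal{N}(\F)$ but $i(\G,\alpha)>0$ whenever $\alpha$ meets the interior of $\mathrm{supp}(\G)$, while if $\alpha$ is disjoint from $\mathrm{supp}(\G)$ one instead uses a curve interior to $\mathrm{supp}(\G)$. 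Either way the hypothesis $3g-3+n\geq 2$ guarantees such curves exist, and the reconstruction of $\mathrm{supp}(\F)$ from $\mathcal{N}(\F)\cap\mathcal{S}$ goes through cleanly.
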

Lemma \ref{topological equivalent} was proved in \cite[Theorem 4.1]{ivanov2001isometries}.

\begin{proposition}\label{preserves indecomposable}
 The map $f_{*}: \mathcal{MF}_{ind} \rightarrow \mathcal{MF}_{ind}$ satisfies:
$$i(f_{*}(\mathcal{F}), f_{*}(\mathcal{G})) = 0 \Longleftrightarrow i(\mathcal{F}, \mathcal{G}) = 0.$$
\end{proposition}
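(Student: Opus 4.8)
The plan is to show $i(f_*(\mathcal{F}), f_*(\mathcal{G})) = 0 \Leftrightarrow i(\mathcal{F}, \mathcal{G}) = 0$ by characterizing the condition $i(\mathcal{F}, \mathcal{G}) = 0$ purely in terms of containment relations between horoballs, since these are exactly the structures that $f$ (and hence $f_*$) preserves. Because $f$ preserves horoballs (Corollary \ref{MHTH}) and $f^{-1}$ also preserves horospheres (Lemma \ref{Inverse_is_true}), the two directions of the equivalence are symmetric: it suffices to prove one implication, say $i(\mathcal{F}, \mathcal{G}) = 0 \Rightarrow i(f_*(\mathcal{F}), f_*(\mathcal{G})) = 0$, and then apply the same reasoning to $f^{-1}$ with the pair $f_*(\mathcal{F}), f_*(\mathcal{G})$.

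The geometric heart of the matter is an intersection-theoretic dichotomy supplied by Lemma \ref{Estimate of extremal length} and its consequence Corollary \ref{Relations}. For indecomposable $\mathcal{F}, \mathcal{G}$, observe the following. If $i(\mathcal{F}, \mathcal{G}) \neq 0$, then part (1) of Corollary \ref{Relations} shows that for every horoball $\operatorname{HB}(\mathcal{F}, t)$ there is some $s_0 > 0$ with $\operatorname{HB}(\mathcal{F}, t) \cap \operatorname{HB}(\mathcal{G}, s) = \varnothing$ for all $s < s_0$; in particular, no matter how we shrink $\mathcal{G}$'s horoball it eventually becomes disjoint from a fixed $\mathcal{F}$-horoball. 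If instead $i(\mathcal{F}, \mathcal{G}) = 0$, then since both are indecomposable the relation $\mathcal{G} \prec \mathcal{F}$ holds (each indecomposable component of $\mathcal{G}$ must be projectively equivalent to the single indecomposable component of $\mathcal{F}$, or more precisely we land in case (2) of Lemma \ref{Estimate of extremal length}), so part (2) of Corollary \ref{Relations} gives $s_0 > 0$ with $\operatorname{HB}(\mathcal{F}, t) \subset \operatorname{HB}(\mathcal{G}, s)$ for all $s > s_0$. Thus the condition $i(\mathcal{F}, \mathcal{G}) = 0$ is equivalent to the existence of a nested-inclusion relation between one $\mathcal{F}$-horoball and large $\mathcal{G}$-horoballs, while $i(\mathcal{F}, \mathcal{G}) \neq 0$ forces the opposite behavior (eventual disjointness of small $\mathcal{G}$-horoballs from a fixed $\mathcal{F}$-horoball).

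From here the proof is a translation argument. Suppose $i(\mathcal{F}, \mathcal{G}) = 0$ with $\mathcal{F}, \mathcal{G} \in \mathcal{MF}_{ind}$. Fix a horoball $\operatorname{HB}(\mathcal{F}, X)$; by the inclusion just described we may find horoballs $\operatorname{HB}(\mathcal{G}, Y_m)$ with $\operatorname{Ext}_{Y_m}(\mathcal{G}) \to \infty$ such that $\operatorname{HB}(\mathcal{F}, X) \subset \operatorname{HB}(\mathcal{G}, Y_m)$ for all large $m$. Applying $f$ and using Corollary \ref{MHTH} together with Proposition \ref{Result two}, each horoball is carried to a horoball of $f_*(\mathcal{F})$ or $f_*(\mathcal{G})$ respectively, and inclusions are preserved, so $\operatorname{HB}(f_*(\mathcal{F}), f(X)) \subset \operatorname{HB}(f_*(\mathcal{G}), f(Y_m))$ for all large $m$. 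If $i(f_*(\mathcal{F}), f_*(\mathcal{G})) \neq 0$ were to hold, then part (1) of Corollary \ref{Relations}, applied to the indecomposable pair $f_*(\mathcal{F}), f_*(\mathcal{G})$, would force these $f_*(\mathcal{G})$-horoballs to be disjoint from the fixed $f_*(\mathcal{F})$-horoball once their radius parameter is small enough; but monotonicity of the correspondence $s \mapsto t(s)$ (Corollary \ref{bijection}) shows that as $\operatorname{Ext}_{Y_m}(\mathcal{G}) \to \infty$ we can equally well drive the image parameter in either direction by choosing the $Y_m$ appropriately, yielding the needed contradiction with nonempty intersection. Hence $i(f_*(\mathcal{F}), f_*(\mathcal{G})) = 0$.

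I expect the main obstacle to be the bookkeeping in the last step: making precise that a supply of \emph{inclusion} relations under $f_*$ genuinely rules out $i(f_*(\mathcal{F}), f_*(\mathcal{G})) \neq 0$, given that the two regimes of Corollary \ref{Relations} speak about different ranges of the radius parameter (inclusion for large $s$ versus disjointness for small $s$). The cleanest route is likely to argue purely qualitatively: the property ``for this fixed horoball of $\mathcal{F}$, arbitrarily large horoballs of $\mathcal{G}$ contain it'' is preserved by the horoball-preserving, inclusion-preserving map $f$, and by Corollary \ref{Relations} this property holds for an indecomposable pair if and only if their intersection number vanishes. Invoking Proposition \ref{Main-Lemma-One} may streamline the contradiction, since it characterizes precisely when one indecomposable horoball sits inside another. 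Once the one implication is secured, symmetry under $f^{-1}$ closes the equivalence immediately.
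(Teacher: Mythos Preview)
There is a genuine gap. Your key claim is that for indecomposable $\mathcal{F},\mathcal{G}$ with $i(\mathcal{F},\mathcal{G})=0$ one has $\mathcal{G}\prec\mathcal{F}$, hence lands in case~(2) of Lemma~\ref{Estimate of extremal length} and obtains an inclusion $\operatorname{HB}(\mathcal{F},t)\subset\operatorname{HB}(\mathcal{G},s)$ for large $s$. This is false. By definition, $\mathcal{G}\prec\mathcal{F}$ means every indecomposable component of $\mathcal{G}$ is projectively equivalent to some indecomposable component of $\mathcal{F}$; since $\mathcal{F}$ is indecomposable this forces $\mathcal{G}=k\mathcal{F}$. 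But two disjoint simple closed curves $\alpha,\beta$ are indecomposable with $i(\alpha,\beta)=0$ and are certainly not projectively equivalent: this is case~(3) of Lemma~\ref{Estimate of extremal length}, where $\sup_{X\in\operatorname{HS}(\mathcal{F},t)}\operatorname{Ext}_X(\mathcal{G})=\infty$, and no inclusion of horoballs exists. Indeed Proposition~\ref{Main-Lemma-One} says precisely that $\operatorname{HB}(\mathcal{F},t)\subset\operatorname{HB}(\mathcal{G},s)$ with $\mathcal{F}\in\mathcal{MF}_{ind}$ already forces $\mathcal{G}=k\mathcal{F}$. So your proposed dichotomy ``inclusion for large $s$ iff $i=0$'' collapses to ``inclusion iff projectively equivalent,'' which is useless for detecting zero intersection in general.

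The fix, and what the paper does, is to replace \emph{inclusion} of horoballs by \emph{nonempty intersection of arbitrarily small} horoballs. When $i(\mathcal{F},\mathcal{G})=0$ the sum $\mathcal{F}+\mathcal{G}$ is a measured foliation, and choosing $X_k$ with $\operatorname{Ext}_{X_k}(\mathcal{F}+\mathcal{G})\to 0$ gives $\operatorname{Ext}_{X_k}(\mathcal{F})\to 0$ and $\operatorname{Ext}_{X_k}(\mathcal{G})\to 0$ simultaneously. Pushing forward by $f$ and using the monotonicity of $s\mapsto t(s)$ from Corollary~\ref{bijection}, one gets points $Y_k$ with $\operatorname{Ext}_{Y_k}(f_*\mathcal{F})\to 0$ and $\operatorname{Ext}_{Y_k}(f_*\mathcal{G})\to 0$; Minsky's inequality then forces $i(f_*\mathcal{F},f_*\mathcal{G})=0$. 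Your overall strategy (characterize $i=0$ via a horoball property preserved by $f$, then use symmetry under $f^{-1}$) is sound; the specific invariant you chose is the wrong one.
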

\begin{proof}
Let $\mathcal{F},\mathcal{G} \in \mathcal{MF}_{ind}$ with $i(\mathcal{F},\mathcal{G}) = 0$.
 We define $\F+\G$ be the measured foliation equivalent to $i(\F,\cdot)+i(\G,\cdot)$ (when $\F$ and $\G$ are multi-curves,
 $\F+\G$ is the union of the curves).

Take a sequence of $X_k\in \mathcal{T}_{g,n}$
such that $\mathrm{Ext}_{X_k}(\mathcal{F}+\mathcal{G})\to 0$ as $k\to \infty$.
By the monotonicity of extremal length,
 we have $$\mathrm{Ext}_{X_k}(\mathcal{F})\leq \mathrm{Ext}_{X_k}(\mathcal{F}+\mathcal{G}),
\mathrm{Ext}_{X_k}(\mathcal{G})\leq \mathrm{Ext}_{X_k}(\mathcal{F}+\mathcal{G}).$$ Then
$$\mathrm{Ext}_{X_k}(\mathcal{F})\to 0, \mathrm{Ext}_{X_k}(\mathcal{G})\to 0$$  as $k\to \infty$.
Equivalently, we have
$$X_k\in \operatorname{HS}(\mathcal{F}, s_k)\cap \operatorname{HS}(\mathcal{G}, t_k)$$ with $s_k,t_k\to 0$
as $k\to \infty$. By Corollary \ref{bijection}, $Y_k:=f(X_k)$ belongs to
\begin{eqnarray*}
f\left(\operatorname{HS}(\mathcal{F}, s_k)\right) \cap f\left((\operatorname{HS}(\mathcal{G}, t_k)\right)
:= \operatorname{HS}(\mathcal{F}', s'_k) \cap \operatorname{HS}(\mathcal{G}', t'_k),
\end{eqnarray*}
with $s_k',t_k'\to 0$ as $k\to \infty$. This implies that $i(\mathcal{F}',\mathcal{G}')=0$. Otherwise,
the product $\mathrm{Ext}_{Y_k}(\mathcal{F}')\mathrm{Ext}_{Y_k}(\mathcal{G}')$ is bounded below by
$i(\mathcal{F}',\mathcal{G}')^2$, which is impossible.
\end{proof}

  Combining Lemma \ref{topological equivalent} with Proposition
  \ref{preserves indecomposable}, we obtain:
\begin{corollary}\label{preserve uniquely ergodic}
 The map $f_{*}$ satisfies $f_*(\mathcal{UMF})=\mathcal{UMF}$.
\end{corollary}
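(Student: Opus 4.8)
The plan is to combine the characterization of unique ergodicity in Lemma \ref{topological equivalent} with the fact that $f_*$ preserves the zero-intersection relation. For $\mathcal{F} \in \mathcal{MF}_{ind}$, recall the notation $\mathcal{N}(\mathcal{F}) = \{\mathcal{G} \in \mathcal{MF}_{ind} \mid i(\mathcal{F},\mathcal{G}) = 0\}$; the second half of Lemma \ref{topological equivalent} says precisely that $\mathcal{F} \in \mathcal{UMF}$ if and only if $\mathcal{N}(\mathcal{F})$ consists only of the positive multiples of $\mathcal{F}$, i.e.\ $\mathcal{N}(\mathcal{F}) = \{k \cdot \mathcal{F} \mid k \in \mathbb{R}_+\}$. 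Since $f_*$ is a bijection of $\mathcal{MF}_{ind}$ that respects projective classes, the strategy is to show that $f_*$ intertwines with the operation $\mathcal{N}(\cdot)$ and then transport this characterization across $f_*$.

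The key step is to verify the identity $f_*\big(\mathcal{N}(\mathcal{F})\big) = \mathcal{N}\big(f_*(\mathcal{F})\big)$ for every $\mathcal{F} \in \mathcal{MF}_{ind}$. The forward inclusion is immediate from the $\Rightarrow$ direction of Proposition \ref{preserves indecomposable}: if $i(\mathcal{F},\mathcal{G}) = 0$ then $i(f_*(\mathcal{F}), f_*(\mathcal{G})) = 0$, so $f_*(\mathcal{G}) \in \mathcal{N}(f_*(\mathcal{F}))$. The reverse inclusion uses surjectivity of $f_*$ together with the $\Leftarrow$ direction of the same biconditional: any element of $\mathcal{N}(f_*(\mathcal{F}))$ has the form $f_*(\mathcal{G})$ with $i(f_*(\mathcal{F}), f_*(\mathcal{G})) = 0$, whence $i(\mathcal{F},\mathcal{G}) = 0$ and $\mathcal{G} \in \mathcal{N}(\mathcal{F})$. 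Now assume $\mathcal{F} \in \mathcal{UMF}$. Then $\mathcal{N}(\mathcal{F}) = \{k \cdot \mathcal{F} \mid k \in \mathbb{R}_+\}$, and because $f_*$ maps the projective ray of $\mathcal{F}$ bijectively onto the projective ray of $f_*(\mathcal{F})$, we get $\mathcal{N}(f_*(\mathcal{F})) = f_*(\mathcal{N}(\mathcal{F})) = \{k \cdot f_*(\mathcal{F}) \mid k \in \mathbb{R}_+\}$. Applying the second half of Lemma \ref{topological equivalent} to $f_*(\mathcal{F})$ then yields $f_*(\mathcal{F}) \in \mathcal{UMF}$, so $f_*(\mathcal{UMF}) \subseteq \mathcal{UMF}$.

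For the reverse inclusion I would invoke Lemma \ref{Inverse_is_true}: since $f^{-1}$ also preserves horospheres, the induced map $(f^{-1})_* = (f_*)^{-1}$ likewise preserves indecomposability, projective classes, and the zero-intersection relation, so the argument above applies verbatim to give $(f_*)^{-1}(\mathcal{UMF}) \subseteq \mathcal{UMF}$, equivalently $\mathcal{UMF} \subseteq f_*(\mathcal{UMF})$. Combining the two inclusions gives the desired equality. This corollary is essentially a formal consequence of the two cited results, so there is no serious analytic obstacle; the only point requiring care is the bookkeeping, namely checking that $f_*$ respects the whole projective ray (not merely individual foliations) so that the set $\{k\cdot\mathcal{F}\}$ is carried onto $\{k\cdot f_*(\mathcal{F})\}$, and confirming that the inverse map inherits all the structural properties needed to close the argument in both directions.
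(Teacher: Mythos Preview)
Your argument is correct and is exactly the approach the paper has in mind: the corollary is stated there without proof, merely as an immediate consequence of combining Lemma~\ref{topological equivalent} with Proposition~\ref{preserves indecomposable}, and your write-up simply makes that combination explicit (including the observation that $f_*$ respects projective rays and that $(f^{-1})_*$ enjoys the same properties). There is nothing to add.
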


\begin{proposition}\label{automorphism of curve complex}
  The map $f$ gives rise to an automorphism of the complex of curves $f_{*}: \mathcal{C}(S) \rightarrow \mathcal{C}(S)$.
\end{proposition}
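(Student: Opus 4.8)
The plan is to characterize the projective classes of simple closed curves intrinsically inside $\MFind$, using only structure that $f_*$ is already known to preserve, and then to transport the resulting vertex bijection to the curve complex. Recall that $\mathcal{S}$ sits inside $\MFind$ as the set of projective classes of weighted simple closed curves, and that two vertices of $\mathcal{C}(S)$ span an edge precisely when their intersection number vanishes. Since both $f$ and $f^{-1}$ preserve horospheres (Lemma \ref{Inverse_is_true}), Proposition \ref{Result two}, Proposition \ref{preserves indecomposable} and Corollary \ref{preserve uniquely ergodic} apply to each, so $f_*$ is a bijection of $\MFind$ that respects projective classes, preserves the relation $i(\cdot,\cdot)=0$ in both directions, and satisfies $f_*(\mathcal{UMF})=\mathcal{UMF}$. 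By Lemma \ref{topological equivalent} it also preserves the partition of $\MFind$ into topological equivalence classes, because $\mathcal{N}(\cdot)$ is defined through the zero intersection relation alone.

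First I would pin down which indecomposable foliations are simple closed curves, the decisive invariant being the size of $\mathcal{N}(\F)$. Splitting into the three types of the ergodic decomposition: (i) if $\F$ is a weighted simple closed curve $\gamma$, then $\{[\G]:i(\gamma,\G)=0\}$ is the full face of $\PMF$ coming from $\mathcal{MF}(S\setminus\gamma)$ together with the annular class of $\gamma$, of maximal topological dimension $6g-8+2n$; (ii) if $\F$ is minimal on a proper subsurface $R\subsetneq S$, then every $\G$ disjoint from $\F$ is carried by the strictly smaller complement $\overline{S\setminus R}$ (together with the classes of $\F$ and of $\partial R$), so $\dim\mathcal{N}(\F)$ is strictly smaller; (iii) if $\F\in\mathcal{UMF}$, then $\mathcal{N}(\F)=\{k\F\}$ by Lemma \ref{topological equivalent}, of dimension $0$. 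Hence $[\F]$ is a simple closed curve exactly when $\dim\mathcal{N}(\F)$ attains the maximal value $6g-8+2n$. (Equivalently, avoiding the dimension count, one can take $\mathcal{S}$ to be the classes for which $\mathcal{N}(\F)$ is maximal under inclusion, $\F\notin\mathcal{UMF}$, and the topological equivalence class of $\F$ is a single projective class; here the proper-subsurface case is excluded because $\mathcal{N}(\F)\subsetneq\mathcal{N}(c)$ for any non-peripheral boundary component $c$ of $R$, while filling non-uniquely-ergodic foliations are excluded by the last condition.)

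Next I would show that $f_*$ preserves this characterization. Since $f_*$ carries $\mathcal{N}(\F)$ bijectively onto $\mathcal{N}(f_*\F)$, the inclusion-theoretic and topological-equivalence conditions transport verbatim; combined with $f_*(\mathcal{UMF})=\mathcal{UMF}$ this forces $f_*(\mathcal{S})=\mathcal{S}$. If one prefers to argue directly with topological dimension, the point to verify is that $f_*$ restricts to a homeomorphism $\mathcal{N}(\F)\to\mathcal{N}(f_*\F)$: this follows from the continuity of $[\G]\mapsto\operatorname{HS}(\G,X)$ together with its injectivity (Corollary \ref{coro:3.5}), which make $f_*$ a homeomorphism of $\PMF\cap\MFind$ and therefore dimension preserving. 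Either route yields that $f_*$ maps projective classes of simple closed curves bijectively to projective classes of simple closed curves.

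Finally, restricting $f_*$ to $\mathcal{S}$ gives a bijection of the vertex set of $\mathcal{C}(S)$; since $\alpha,\beta$ span an edge iff $i(\alpha,\beta)=0$ and $f_*$ preserves this relation in both directions, the vertex bijection sends edges to edges and non-edges to non-edges, so it extends to a simplicial automorphism $f_*:\mathcal{C}(S)\to\mathcal{C}(S)$. I expect the main obstacle to be the second and third steps: producing an intrinsic description of $\mathcal{S}$ that $f_*$ is guaranteed to respect. The delicate point is ruling out minimal foliations on small subsurfaces, since already on a complexity-one piece the size of a maximal disjoint system equals $3g-3+n$, the same as for a pants curve; this is exactly why the genuine topological dimension of $\mathcal{N}(\F)$ (or the refined inclusion together with topological-equivalence package) is needed, along with the verification that $f_*$ preserves it.
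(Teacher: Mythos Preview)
Your inclusion-theoretic route is correct and is a genuinely different argument from the paper's. The paper does not characterize $\mathcal{S}$ intrinsically; instead it fixes $\gamma\in\mathcal{S}$, sets $\mathcal{G}=f_*(\gamma)$, and eliminates the alternatives one by one. First it shows, via Lemma~\ref{topological equivalent} and Proposition~\ref{preserves indecomposable}, that the cone of transverse measures on $\widetilde{\mathcal{G}}$ is one-dimensional (essentially your condition that the topological equivalence class be a single projective class, transported through $f_*$). The uniquely ergodic case is then dispatched by Corollary~\ref{preserve uniquely ergodic}. For the proper-subsurface case the paper picks a boundary component $\beta$ of the supporting subsurface, pulls it back to $\mathcal{F}=f_*^{-1}(\beta)$, finds $\alpha\in\mathcal{S}$ with $i(\gamma,\alpha)=0$ but $i(\mathcal{F},\alpha)\neq 0$, and pushes forward to reach a contradiction with the fact that anything crossing $\beta$ must cross $\mathcal{G}$. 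Your observation $\mathcal{N}(\mathcal{F})\subsetneq\mathcal{N}(c)$ for a boundary curve $c$ is exactly the structural content behind that ad hoc step; packaging it as ``$\mathcal{N}(\gamma)$ is inclusion-maximal'' makes the invariance under $f_*$ automatic and arguably cleaner.

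One caution about your alternative dimension-theoretic route: it is not justified by what is available. Nowhere in the paper is $f_*$ shown to be continuous on $\MFind$ (or on its projectivization), and the continuity and injectivity of $[\mathcal{G}]\mapsto\operatorname{HS}(\mathcal{G},X)$ do not by themselves give this, since you would still need continuity of the inverse assignment from a horosphere back to a projective class, in some topology you have not specified. Moreover $\mathcal{N}(\mathcal{F})$ lives in $\MFind$, which is not a manifold, so invoking topological dimension requires care. Since your inclusion-based characterization needs none of this, the overall proposal stands; but if you present both routes, flag the first as heuristic or supply the missing continuity argument.
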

\begin{proof}
  It suffices to prove that $f_{*}(\gamma) \in \mathcal{S}$ when $\gamma \in \mathcal{S}$.
  According to Proposition \ref{preserves indecomposable},
  we can assume that $\mathcal{G} = f_{*}(\gamma) \in \mathcal{MF}_{ind}$.
  Denote by $\widetilde{\mathcal{G}}$ the unmeasured foliation obtained from $\mathcal{G}$ by forgetting the measure.

  First, we observe that the dimension of the space of transverse measures on $\widetilde{\mathcal{G}}$ is one.
  If not, there exists some other $\mathcal{G}' \in \mathcal{MF}_{ind}$ which is topologically equivalent to $\mathcal{G}$, but not projectively equivalent. Denote
    $\mathcal{F} = f_{*}^{-1}(\mathcal{G}')\in \mathcal{MF}_{ind}.$
 By Proposition \ref{preserves indecomposable}, we have
    $$ \mathcal{N}(\gamma) = \mathcal{N} (f_{*}^{-1}(\mathcal{G}))= \mathcal{N} (f_{*}^{-1}(\mathcal{G}')) = \mathcal{N}(\mathcal{F}),$$
  since  ${\mathcal{N}(\mathcal{G}) = \mathcal{N}(\mathcal{G}')}$. Applying Lemma \ref{topological equivalent} we conclude that $\mathcal{F}$ and $\gamma$ are projectively equivalent. This is a contradiction to the assumption that
  $\G$ and $\G'$ are not projective equivalent.

There are three possibilities:

 (i) $\G \in \mathcal{S}$. This is what we want to prove.

 (ii) $\mathcal{G}$ is a uniquely ergodic measured foliation on $S$.
  This can not happen because $\gamma= f_{*}^{-1}(\mathcal{G})\notin \mathcal{UMF}$.

 (iii) The remaining case is that $\G$ is uniquely ergodic on $X_{0}$,
  which is a proper subsurface of $S$. Let $\beta$ be a boundary component of $X_0$.
  Denote $\F=f_*^{-1}(\beta)$. Then $\F\in \MFind$ is either a simple closed curve or a
  minimal ergodic component.  In both cases, there always exists  $\alpha \in \mathcal{S}$ such that $i(\mathcal{F}, \alpha) \neq 0$ and $i(\gamma, \alpha) = 0$. Then we have
  $$i(\beta, f_*(\alpha)) \neq 0, i(\G, f_*(\alpha)) = 0.$$
  Due to our construction, any measured foliation that intersects with the boundary component $\beta$ must
 also intersects with $\G$ (since the measured foliation must cross the collar neighborhood of $\beta$ and
  $\G$ is filling on the subsurface $X_0$).
   This leads to a contraction.
\end{proof}

\subsection{Proof of Theorem \ref{Result four}}
The proof is motivated by \cite[\S 5]{ivanov2001isometries}.

\begin{proof}[Proof of Theorem \ref{Result four}]
Let $f: \mathcal{T}_{g,n} \rightarrow \mathcal{T}_{g,n}$ be a diffeomorphism that preserves horospheres.
We show that $f$ is induced by an element of the extended mapping class group.

  By Theorem \ref{automorphism of curve complex},
  $f$ induces an automorphism $f_{*}$ of the complex of curve $\mathcal{C}(S)$.
  Then the theorem of Ivanov (Theorem \ref{Th B}) implies that $f_{*}$ acts on $\mathcal{C}(S)$ as an element $\phi$ of the extended mapping class group.
   Replacing $f$ by $\phi^{-1} \circ f$, we can assume that $f_{*} =\mathrm{id}: \mathcal{S}\to \mathcal{S}$.
 It remains to prove that $f = \mathrm{id}$ on $\mathcal{T}_{g,n}$.

Let $(\alpha, \beta)$ be a pair of filling simple closed curves.
  Denote by $\mathbf{G}_{\alpha, \beta}$ the Teichm\"uller geodesic determined by $\alpha$ and $\beta$.
   We first claim that $$f(\mathbf{G}_{\alpha, \beta}) = \mathbf{G}_{\alpha, \beta}.$$
   In fact, for any $X \in \mathbf{G}_{\alpha, \beta}$,  $\operatorname{HS}(\alpha, X)$ is tangent to  $\operatorname{HS}(\beta, X)$ at $X$
  (see Lemma \ref{condition of tangential}).
 Since $f_*(\alpha)=\alpha, f_*(\beta)=\beta$ and $f$ preserves horospheres,
  $\operatorname{HS}(\alpha, f(X))$ is tangent to  $\operatorname{HS}(\beta, f(X))$ at $f(X)$.
 Thus ${f(X) \in \mathbf{G}_{\alpha, \beta}}$. This shows that $f$ preserves the
 Teichm\"uller geodesic $\mathbf{G}_{\alpha, \beta}$.

  We next show that $f$ is identity on $\mathbf{G}_{\alpha, \beta}$.
  We parameterize $\mathbf{G}_{\alpha, \beta}$ by $t \to \mathbf{G}_{\alpha, \beta}(t)$ such that
  the pair of measured foliation $(\F,\G)$ is changed to $(e^t\alpha, e^{-t}\beta)$. Without loss of generality,
  we may also assume that
  \begin{equation*}
  \frac{\textrm{Ext}_{\mathbf{G}_{\alpha, \beta}(0)}(\alpha)}
  {\textrm{Ext}_{\mathbf{G}_{\alpha, \beta}(0)}(\beta)}=1.
\end{equation*}
  According to Lemma \ref{Dense one}, for each $t$, there exists a simple closed curve $\gamma$
  such that both $(\alpha, \gamma)$ and $(\beta,\gamma)$ are filling, and
  $$k=\frac{i(\alpha,\gamma)^2}{i(\beta,\gamma)^2}\approx e^{2t}.$$
 There is a unique $X\in \mathbf{G}_{\alpha,\beta}$ such that
$$
 {\frac{\textrm{Ext}_{X}(\alpha)}
  {\textrm{Ext}_{X}(\beta)}}=k.
$$
Let $r = \textrm{Ext}_{X}(\alpha), s = \textrm{Ext}_{X}(\beta)$.
Then the proof of Lemma \ref{Dense two} shows that the horospheres  $\operatorname{HS}(\alpha, r), \operatorname{HS}(\beta, s)$ are tangent to the horosphere $\operatorname{HS}(\gamma,  \frac{i(\alpha,\gamma)^2}{r})$.
 Note that $X$ is close to $\mathbf{G}_{\alpha,\beta}(t)$.

 As a result, there is a dense subset $\mathbf{G} \subset \mathbf{G}_{\alpha, \beta}$
  such that for every $X \in \mathbf{G}$, there exist a simple closed curve $\gamma$ and $r,s,t \in \mathbb{R}_{+}$ such that horospheres $\operatorname{HS}(\gamma, r)$, $\operatorname{HS}(\alpha, s)$, $\operatorname{HS}(\beta, t)$ are tangent to each other, and $X$ is the tangent point of $\operatorname{HS}(\alpha, s)$ and $\operatorname{HS}(\beta, t)$.
The images of the above triple of horospheres under $f$ are also horospheres tangent to each other
 (still associated with $\alpha,\beta,\gamma$).
 As we have observed in Lemma \ref{Dense two}, $X$ is the unique solution of the tangent problem. Thus
 $f(X)=X$.

  Since the set of Teichm\"uller geodesics determined by filling pairs of simple closed geodesics is dense
  in $\T$ \cite{Masur-dense},
  it follows from continuity that $f=\mathrm{id}$.
\end{proof}

\section{Metric balls and Busemann functions}\label{sec:Busemann}

  For $X \in \mathcal{T}_{g,n}$ and $r \in \mathbb{R}_{+}$, we denote by
    \begin{equation*}
      \operatorname{B}(X, r) = \{ Y \in \mathcal{T}_{g,n}\ |\ d_{\mathcal{T}}(X, Y) < r\}
    \end{equation*}
  the open metric ball of radius $r$ centered at $X$.
  The closure of $\operatorname{B}(X, r)$ is called a closed metric ball.
 It is not hard to show that (see \cite[Lemma 3.2]{bourque2016non})

\begin{lemma}
  Every closed metric ball in $\mathcal{T}_{g,n}$ is a countable intersection of closed horoballs.
\end{lemma}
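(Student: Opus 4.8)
The plan is to show that every closed metric ball is the intersection of all the closed horoballs that contain it, and then to verify that this intersection is countable. First I would fix a center $X\in\mathcal{T}_{g,n}$ and radius $r>0$, and observe that by Kerckhoff's formula the closed metric ball can be written in terms of extremal length ratios: for any $\alpha\in\mathcal{S}$ we have $\operatorname{Ext}_Y(\alpha)\leq e^{2r}\operatorname{Ext}_X(\alpha)$ whenever $Y\in\overline{\operatorname{B}}(X,r)$. This immediately exhibits $\overline{\operatorname{B}}(X,r)$ as a subset of the closed horoball $\overline{\operatorname{HB}}(\alpha, e^{2r}\operatorname{Ext}_X(\alpha))$ for each $\alpha$. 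Thus the inclusion
\begin{equation*}
\overline{\operatorname{B}}(X,r)\subset \bigcap_{\alpha\in\mathcal{S}}\overline{\operatorname{HB}}\bigl(\alpha,\, e^{2r}\operatorname{Ext}_X(\alpha)\bigr)
\end{equation*}
holds automatically.

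For the reverse inclusion I would use Kerckhoff's formula in its supremum form. If $Y$ lies in every closed horoball on the right, then $\operatorname{Ext}_Y(\alpha)\leq e^{2r}\operatorname{Ext}_X(\alpha)$ for all $\alpha\in\mathcal{S}$, so
\begin{equation*}
\sup_{\alpha\in\mathcal{S}}\frac{\operatorname{Ext}_Y(\alpha)}{\operatorname{Ext}_X(\alpha)}\leq e^{2r},
\end{equation*}
and therefore $d_{\mathcal{T}}(X,Y)\leq r$, i.e.\ $Y\in\overline{\operatorname{B}}(X,r)$. This gives equality of the two sides. The key point making this work is that extremal lengths of simple closed curves alone already detect the Teichm\"uller distance, which is exactly the content of Kerckhoff's formula.

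Finally I would pass from the uncountable family indexed by $\mathcal{S}$ to a countable subfamily, which is the only step requiring a little care. Since $\mathcal{S}$ is countable, there is nothing to prove here: the index set $\mathcal{S}$ of free homotopy classes of simple closed curves is itself a countable set, so the intersection displayed above is already a countable intersection of closed horoballs. I would simply remark that $\mathcal{S}$ is countable because it injects into the countable set of conjugacy classes of the (finitely generated) fundamental group of $S$. The main obstacle, to the extent there is one, is making sure the horoballs used are genuinely \emph{closed} horoballs and that the nonstrict inequalities line up correctly with the closed metric ball rather than the open one; once the inequalities are stated with $\leq$ throughout, both inclusions are immediate from Kerckhoff's formula and no further topology is needed.
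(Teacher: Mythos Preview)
Your proof is correct. The paper does not actually supply its own proof of this lemma; it merely cites \cite[Lemma 3.2]{bourque2016non} with the remark that the statement ``is not hard to show.'' Your argument---writing the closed ball via Kerckhoff's formula as the set where $\operatorname{Ext}_Y(\alpha)\leq e^{2r}\operatorname{Ext}_X(\alpha)$ for every $\alpha\in\mathcal{S}$, and then noting that $\mathcal{S}$ is countable---is exactly the standard argument and is what one finds in the cited reference. The only minor comment is that Kerckhoff's formula is stated in the paper with $\operatorname{Ext}_X$ in the numerator, so you are implicitly using the symmetry $d_{\mathcal{T}}(X,Y)=d_{\mathcal{T}}(Y,X)$ to swap the roles; this is of course harmless but could be mentioned explicitly.
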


In this section, we prove the following converse result.  Recall that $\mathcal{MF}_{ind}$ is the set of indecomposable measured foliations on $S$.
\begin{lemma}\label{HB-MB}
  Let $\mathcal{F} \in \mathcal{MF}_{ind}$.
   Then every open horoball in $\mathcal{T}_{g,n}$ associated to $\mathcal{F}$ is a nested union of open metric balls.
\end{lemma}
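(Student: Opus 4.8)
The plan is to exploit the correspondence between metric balls and horoballs established via Busemann functions for indecomposable foliations. The key geometric input is that an indecomposable (in particular uniquely ergodic or simple-closed-curve) measured foliation $\mathcal{F}$ determines a well-behaved family of Teichm\"uller geodesics all perpendicular to the horospheres $\operatorname{HS}(\mathcal{F}, s)$, so that the open horoball $\operatorname{HB}(\mathcal{F}, s)$ should arise as the increasing union of metric balls whose centers march off to infinity along such a perpendicular geodesic ray, with radii growing to compensate. Concretely, I would fix a point $X_0 \in \operatorname{HS}(\mathcal{F}, s)$ and consider the Teichm\"uller geodesic ray $\mathbf{G}_q(\tau)$, $\tau \geq 0$, emanating from $X_0$ in the direction of the quadratic differential $q$ with $\mathcal{F}_v(q) = \mathcal{F}$, so that $\operatorname{Ext}_{\mathbf{G}_q(\tau)}(\mathcal{F})$ decreases toward $0$ (this is the perpendicular geodesic of Lemma \ref{Equidistant}). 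Writing $X_\tau = \mathbf{G}_q(\tau)$, the claim reduces to showing
\begin{equation*}
\operatorname{HB}(\mathcal{F}, s) = \bigcup_{\tau > 0} \operatorname{B}(X_\tau, r(\tau))
\end{equation*}
for a suitable increasing sequence of radii $r(\tau)$, where the union is nested.

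First I would verify the inclusion $\bigcup_\tau \operatorname{B}(X_\tau, r(\tau)) \subset \operatorname{HB}(\mathcal{F}, s)$ by a Kerckhoff-formula estimate: for any $Y \in \operatorname{B}(X_\tau, r(\tau))$, Kerckhoff's theorem bounds $\operatorname{Ext}_Y(\mathcal{F})$ above in terms of $\operatorname{Ext}_{X_\tau}(\mathcal{F})$ and $d_{\mathcal{T}}(X_\tau, Y)$, namely $\operatorname{Ext}_Y(\mathcal{F}) \leq e^{2 r(\tau)} \operatorname{Ext}_{X_\tau}(\mathcal{F})$, and one chooses $r(\tau)$ so that this product stays strictly below $s$. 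The reverse inclusion is where the indecomposability hypothesis does the real work: given any $Y \in \operatorname{HB}(\mathcal{F}, s)$, I must produce some $\tau$ and admissible radius with $Y \in \operatorname{B}(X_\tau, r(\tau))$. The natural device is the Busemann function associated to the ray $X_\tau$; since the horospheres $\operatorname{HS}(\mathcal{F}, \cdot)$ are equidistant (Lemma \ref{Equidistant}) and the foliation is indecomposable, the level sets of the Busemann function $b(Y) = \lim_{\tau \to \infty} \bigl( d_{\mathcal{T}}(Y, X_\tau) - \tau \bigr)$ should coincide with the horospheres, so that the sublevel set $\{b < c\}$ equals $\operatorname{HB}(\mathcal{F}, s)$ for the appropriate $c$. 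Once that identification of Busemann sublevel sets with horoballs is in hand (this is precisely the content flagged in the paper's remark that for indecomposable $\mathcal{F}$ there is a direct relation between horospheres and Busemann level sets), the nested-union statement follows since $d_{\mathcal{T}}(Y, X_\tau) - \tau \to b(Y) < c$ forces $d_{\mathcal{T}}(Y, X_\tau) < \tau + c$ for all large $\tau$, placing $Y$ in the metric ball $\operatorname{B}(X_\tau, \tau + c)$.

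The main obstacle I anticipate is precisely the Busemann-function identification: proving that the Busemann function of the perpendicular ray has the horosphere $\operatorname{HS}(\mathcal{F}, s)$ as a level set. In a general Finsler space this can fail, because distance minimizers to the receding basepoint need not asymptotically align with the extremal-length gradient flow. What saves the argument is that $\mathcal{F}$ is indecomposable, hence (in the uniquely ergodic case) the geodesic ray $X_\tau$ is a regular Teichm\"uller ray whose asymptotic behavior is governed by the single foliation $\mathcal{F}$; Minsky's inequality (Theorem \ref{GM}) then pins down the distance estimates sharply, with equality exactly along the determining geodesic. I would therefore spend the bulk of the effort establishing the two-sided asymptotic estimate
\begin{equation*}
d_{\mathcal{T}}(Y, X_\tau) = \tau + \tfrac{1}{2}\log \frac{\operatorname{Ext}_Y(\mathcal{F})}{s} + o(1), \quad \tau \to \infty,
\end{equation*}
uniformly on compact sets, from which both inclusions and the nesting drop out; the lower bound is the easy Kerckhoff direction, and the matching upper bound is the delicate part requiring the indecomposability of $\mathcal{F}$ to control the contribution of foliations transverse to $\mathcal{F}$ along the long geodesic segment.
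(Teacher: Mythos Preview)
Your proposal is correct and follows essentially the same route as the paper: identify the horoball $\operatorname{HB}(\mathcal{F},s)$ with a sublevel set of the Busemann function along the perpendicular Teichm\"uller ray, and then read off the nested union of metric balls $\operatorname{B}(X_\tau,\tau+c)$ from the monotone convergence $d_{\mathcal{T}}(Y,X_\tau)-\tau\searrow b(Y)$. The only difference is one of packaging: where you propose to establish the asymptotic $d_{\mathcal{T}}(Y,X_\tau)=\tau+\tfrac{1}{2}\log\bigl(\operatorname{Ext}_Y(\mathcal{F})/s\bigr)+o(1)$ by hand (lower bound via Kerckhoff, upper bound via indecomposability and Minsky's inequality), the paper simply invokes Walsh's explicit formula for the Busemann function and observes that for $\mathcal{F}\in\mathcal{MF}_{ind}$ it collapses, via Corollary~\ref{coro:ext}, to $B_{\mathbf{G}}(X)=\tfrac{1}{2}\log\operatorname{Ext}_X(\mathcal{F})-\tfrac{1}{2}\log\operatorname{Ext}_{X_0}(\mathcal{F})$, which is exactly your asymptotic with $o(1)=0$.
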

  The about lemma was proved in special cases, by Bourque and Rafi \cite{bourque2016non}
  in the case that $\mathcal{F}$ is a simple closed curve,
  and by Masur \cite{Masur1981Transitivity} in the case that $\mathcal{F}$ is uniquely ergodic.

Our proof is to show that horospheres associated with indecomposable measured foliations
are level sets of Busemann functions.

 \subsection{Busemann functions}
  Let $\mathcal{F} \in \mathcal{MF}$. Denote by
  $\mathbf{G}(\cdot) : [0,\infty)\rightarrow\mathcal{T}_{g,n}$
  the Teichm\"uller geodesic ray determined by $\mathcal{F}$ and $X_0 = \mathbf{G}(0)$.

  \begin{definition}(Busemann function)
  With above notation, the Busemann function associated to $\mathbf{G}(\cdot)$ is the map $B_{\mathbf{G}}: \mathcal{T}_{g,n} \rightarrow \mathbb{R}$
  defined by
    \begin{equation*}
      B_{\mathbf{G}}(\cdot) = \lim_{t \rightarrow \infty}
      \left(d_{\mathcal{T}}(\cdot, \mathbf{G}(t)) -
      d_{\mathcal{T}}(\mathbf{G}(0),\mathbf{G}(t))\right).
    \end{equation*}
\end{definition}
To see the convergence, let $X\in \T$ and denote
\begin{eqnarray*}
D(t)&=& d_{\mathcal{T}}(X, \mathbf{G}(t)) - d_{\mathcal{T}}(\mathbf{G}(0),\mathbf{G}(t))\\
&=& d_{\mathcal{T}}(X, \mathbf{G}(t))-t.
\end{eqnarray*}
We observe that $D(t)$ is a bounded non-increasing function.
In fact, $D(t)\geq -d(\mathbf{G}(0),X)$. For any $0\leq t_1\leq t_2$, we have
\begin{eqnarray*}
D(t_2)&=& d_{\mathcal{T}}(X, \mathbf{G}(t_2)) - t_2\\
&=& d_{\mathcal{T}}(X, \mathbf{G}(t_2))- d_{\mathcal{T}}(\mathbf{G}(t_2), \mathbf{G}(t_1))-t_1 \\
&\leq& d_{\mathcal{T}}(X, \mathbf{G}(t_1))-t_1 \\
&=& D(t_1).
\end{eqnarray*}

For simplicity, we denote $B = B_{\mathbf{G}}$. Let
 $$\operatorname{L}(B, s) = \{X \in \mathcal{T}_{g,n}\ |\ B(X) = s\}$$
and
$$ \textrm{SL}(B, s) = \{X \in \mathcal{T}_{g,n}\ |\ B(X) < s\},$$
denote the level set and sub-level set of $B$, respectively. Note that $X_0\in \operatorname{L}(B, 0)$. Let
\begin{equation*}
\operatorname{S}(X, \varepsilon) = \{Y \in \mathcal{T}_{g,n}\ |\ d_{\mathcal{T}}(X, Y) = \varepsilon\}
\end{equation*}
and
\begin{equation*}
      \operatorname{B}(X, \varepsilon) = \{Y \in \mathcal{T}_{g,n}\ |\ d_{\mathcal{T}}(X, Y) < \varepsilon\}
    \end{equation*}
be the metric sphere and the metric ball with center $X \in \mathcal{T}_{g,n}$ and radius $\varepsilon \in \mathbb{R}_{+}$, respectively.

\begin{definition}
  Let $\{\mathbf{M}_{n}\}$ be a sequence of non-empty subsets of $\mathcal{T}_{g,n}$.
  We define the upper and lower limits of the sequence as follows:
\begin{enumerate}
    \item The upper limit $\varlimsup \mathbf{M}_{n}$ consists of all
          accumulation points of any sequences $\{X_{n}\}$
          with $X_{n} \in \mathbf{M}_{n}$. Thus $X \in \varlimsup \mathbf{M}_{n}$
          if and only if each $\operatorname{S}(X, \varepsilon)$, $\varepsilon > 0$,
          intersects with infinitely many $\mathbf{M}_{n}$.
    \item The lower limit $\varliminf \mathbf{M}_{n}$ consists of all points $X$
          such that each $\operatorname{S}(X, \varepsilon)$, $\varepsilon > 0$,
          intersects with all but a finite number of $\mathbf{M}_{n}$.
  \end{enumerate}
By definition, $\varliminf \mathbf{M}_{n} \subset \varlimsup \mathbf{M}_{n}$.
If $\varliminf \mathbf{M}_{n} =  \varlimsup \mathbf{M}_{n}$,
we denote by $\lim  \mathbf{M}_{n}$.
\end{definition}

The next lemma describes the relationship between metric spheres and level sets of Busemann functions.
The proof is obtained in \cite{busemann2005geometry},
which applies to a general geodesic metric spaces. We give the proof here for convenience of the readers.

\begin{lemma}\label{Buse-Limit-Sphere}
Let $\operatorname{S}(\mathbf{G}(t),t)$ and $\operatorname{B}(\mathbf{G}(t), t)$, respectively,  be the metric sphere and
 the metric ball with center at $\mathbf{G}(t)$
and passing through $X_0$. Then
$$ \lim_{t \rightarrow \infty} \operatorname{S}(\mathbf{G}(t), t)
      = \mathrm{L}(B, 0)\ \ and\ \ \lim_{t \rightarrow \infty} \operatorname{B}(\mathbf{G}(t), t)
      = \mathrm{SL}(B, 0).$$
\end{lemma}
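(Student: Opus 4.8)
The plan is to reduce the whole statement to the auxiliary functions $B_t(X)=d_{\mathcal{T}}(X,\mathbf{G}(t))-t$, whose behaviour has already been recorded above: for fixed $X$ the quantity $D(t)=B_t(X)$ is bounded below and non-increasing, and it decreases to $B(X)$. Two elementary facts drive everything. First, each $B_t$ is $1$-Lipschitz by the triangle inequality, so the limit $B$ is continuous. Second, by monotonicity $B(X)\le B_t(X)$ for every $t$. Rewriting the relevant sets as (sub)level objects of $B_t$, namely $\operatorname{S}(\mathbf{G}(t),t)=\{X:B_t(X)=0\}$ and $\operatorname{B}(\mathbf{G}(t),t)=\{X:B_t(X)<0\}$, turns the assertion into a comparison of the level/sublevel sets of $B_t$ with those of the limit $B$. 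Throughout I let $t$ run over an arbitrary sequence $t_n\to\infty$; the estimates are uniform enough that the continuous limit causes no trouble.

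For the inclusion $\varlimsup\operatorname{S}(\mathbf{G}(t),t)\subseteq\operatorname{L}(B,0)$ I would use the accumulation-point description of the upper limit. If $X_n\in\operatorname{S}(\mathbf{G}(t_n),t_n)$ and $X_n\to X$ along a subsequence, then $B(X_n)\le B_{t_n}(X_n)=0$, so $B(X)\le0$ by continuity. For the matching lower bound, whenever $s\le t_n$ the reverse triangle inequality gives $d_{\mathcal{T}}(X_n,\mathbf{G}(s))\ge d_{\mathcal{T}}(X_n,\mathbf{G}(t_n))-d_{\mathcal{T}}(\mathbf{G}(s),\mathbf{G}(t_n))=t_n-(t_n-s)=s$, hence $B_s(X_n)\ge0$; fixing $s$ and letting $n\to\infty$ yields $B_s(X)\ge0$, and then $s\to\infty$ gives $B(X)\ge0$. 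Therefore $B(X)=0$.

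For the opposite inclusion $\operatorname{L}(B,0)\subseteq\varliminf\operatorname{S}(\mathbf{G}(t),t)$, take $X$ with $B(X)=0$; then $B_t(X)\ge0$, i.e. $d_{\mathcal{T}}(X,\mathbf{G}(t))\ge t$, so on the unique Teichm\"uller geodesic from $\mathbf{G}(t)$ to $X$ the point $X_t$ at distance $t$ from $\mathbf{G}(t)$ lies on $\operatorname{S}(\mathbf{G}(t),t)$, and $d_{\mathcal{T}}(X_t,X)=d_{\mathcal{T}}(X,\mathbf{G}(t))-t=B_t(X)\to0$. Thus for every $\varepsilon>0$ and all large $t$ the set $\operatorname{S}(\mathbf{G}(t),t)$ contains the point $X_t\in\operatorname{B}(X,\varepsilon)$ while also containing points at distance $>\varepsilon$ from $X$; since metric spheres in $\T$ are connected, $\operatorname{S}(\mathbf{G}(t),t)$ must meet $\operatorname{S}(X,\varepsilon)$, which is exactly the condition $X\in\varliminf\operatorname{S}(\mathbf{G}(t),t)$. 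Combining the three inclusions with $\varliminf\subseteq\varlimsup$ gives $\lim_{t\to\infty}\operatorname{S}(\mathbf{G}(t),t)=\operatorname{L}(B,0)$.

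The ball statement is handled identically: if $B(X)<0$ then $B_t(X)<0$ for all large $t$, so $X\in\operatorname{B}(\mathbf{G}(t),t)$, and the same radial point shows $\operatorname{S}(X,\varepsilon)$ meets $\operatorname{B}(\mathbf{G}(t),t)$ for all large $t$, giving $\operatorname{SL}(B,0)\subseteq\varliminf\operatorname{B}(\mathbf{G}(t),t)$, while $B\le B_t$ forces any accumulation point of the balls into $\{B\le0\}$. I expect the only genuine obstacle to be the topological input in the lower-limit step, namely the connectedness of the metric spheres $\operatorname{S}(\mathbf{G}(t),t)$; I would justify this from the unique geodesic connectivity of the Teichm\"uller metric together with continuous dependence of geodesics on endpoints and properness of $\T$, which realize each metric sphere as a topological sphere of dimension $6g-7+2n\ge3$, in particular connected. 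Everything else is the monotone convergence $B_t\downarrow B$ and the triangle inequality.
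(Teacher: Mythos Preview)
Your argument is correct and follows essentially the same route as the paper: both rely on the monotone decrease $B_t\downarrow B$, and for the inclusion $\operatorname{L}(B,0)\subseteq\varliminf\operatorname{S}(\mathbf{G}(t),t)$ both produce the radial foot point $X_t$ on the geodesic from $\mathbf{G}(t)$ to $X$ with $d_{\mathcal T}(X,X_t)=B_t(X)\to 0$. Two small remarks. First, your connectedness detour is unnecessary: although the paper's Definition writes $S(X,\varepsilon)$, the paper's own proof concludes with $\operatorname{B}(Y,\varepsilon)\cap\operatorname{S}(\mathbf{G}(t),t)\neq\emptyset$, i.e.\ it reads the liminf condition with open balls (this is Busemann's convention, where $S(p,\rho)$ denotes the open ball); so exhibiting $X_t\in\operatorname{B}(X,\varepsilon)$ already suffices. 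Second, your $\varlimsup$ step, splitting into $B(X)\le 0$ and $B(X)\ge 0$ via an auxiliary parameter $s\le t_n$, is a valid variant of the paper's more direct computation $B(Y)=\lim_n\bigl(d_{\mathcal T}(X_n,\mathbf{G}(t_n))-t_n\bigr)=0$.
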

\begin{proof}
We first show that $\varlimsup \textrm{S}(\mathbf{G}(t), t) \subset \operatorname{L}(B, 0)$.

Let $Y$ be an arbitrary accumulation point, that is,
there is a sequence $X_{n} \in \textrm{S}(\mathbf{G}(t_n), t_n)$ such that $X_{n} \rightarrow Y$ as $t_n\to \infty$.
 It suffices to show that $B(Y)=0$.
  In fact, by definition,
\begin{align*}
           B(Y) & = \lim_{n \rightarrow \infty} \{d_{\mathcal{T}}(Y, \mathbf{G}(t_{n}))
                      - t_{n}\} \\
                    & = \lim_{n \rightarrow \infty}
                        \{d_{\mathcal{T}}(X_n, \mathbf{G}(t_{n})) - t_n \}\\
                    & = \lim_{n\rightarrow \infty}
                        \{d_{\mathcal{T}}(X_0, \mathbf{G}(t_{n})) - t_n\}\\
                    & = 0.
         \end{align*}

It remains to prove that $\operatorname{L}(B, 0) \subset \varliminf \textrm{S}(\mathbf{G}(t), t)$.

Choose any $Y \in \operatorname{L}(B, 0)$.
As we have noted above, the Busemann function $B(\cdot)$ is the limit of a non-increasing sequence of  functions
$d_{\mathcal{T}}(\cdot, \mathbf{G}(t))-t$. It follows that
$$d_{\mathcal{T}}(Y, \mathbf{G}(t))-t\geq B(Y)=0.$$
Thus the distance between $Y$ and $\mathbf{G}(t)$ is greater than $t$.
Consider the geodesic segment $\overline{Y\mathbf{G}(t)}$ connecting $Y$ and $\mathbf{G}(t)$.
It intersects with the metric sphere $\textrm{S}(\mathbf{G}(t), X)$ at some point $X_{t}$. Then
         \begin{align*}
           d_{\mathcal{T}}(Y, X_{t}) & = d_{\mathcal{T}}(Y, \mathbf{G}(t)) - d_{\mathcal{T}}(X_{t}, \mathbf{G}(t)) \\
                           & = d_{\mathcal{T}}(Y, \mathbf{G}(t)) - d_{\mathcal{T}}(X_0, \mathbf{G}(t)) \\
                           & \to B(Y) - B(X_0) \\
                           & = 0
         \end{align*}
       This implies that when $t$ large enough, we have
         \begin{equation*}
           B(Y, \varepsilon) \cap \textrm{S}(\mathbf{G}(t), t) \neq \emptyset,
         \end{equation*}
       and then
         \begin{equation*}
           Y\in \varliminf \textrm{S}(\mathbf{G}(t), t).
         \end{equation*}
The proof of the sub-level set is similar.
\end{proof}

\subsection{Formula of Busemann functions}\label{sec:walsh}
There is an explicit formula of $B_\mathbf{G}$, due to Walsh \cite{Walsh2012The}.
As before, we denote by
  $\mathbf{G}(\cdot)$
  the Teichm\"uller geodesic ray determined by $\mathcal{F}$ and $X_{0} = \mathbf{G}(0)$.
  Denote by $\G$ the horizontal foliation of the Hubbard-Masur differential of $\F$ on $X_0$.

\begin{theorem}[Corollary 1.2 of \cite{Walsh2012The}]
Let $\F=\sum_j \F_j$ be the ergodic decomposition of $\F$. Then the Busemann function $B_{\mathbf{G}}(\cdot)$
is given by
    \begin{equation*}
     B_{\mathbf{G}}(X) =\frac{1}{2} \log\sup_{\gamma\in\mathcal{S}} \frac{E_{\F}(\gamma)}{\operatorname{Ext}_{X}(\gamma)}
     -\frac{1}{2}\log\sup_{\gamma\in\mathcal{S}} \frac{E_{\F}(\gamma)}{\operatorname{Ext}_{X_0}(\gamma)},
    \end{equation*}
where $E_{\F}(\gamma)$ is defined by
$$E_{\F}(\gamma)= \sum_j \frac{i(\F_j,\gamma)^2}{i(\F_j,\G)}.$$
\end{theorem}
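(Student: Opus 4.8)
The plan is to read the formula off Kerckhoff's description of the Teichm\"uller distance, combined with a precise asymptotic expansion of extremal lengths along the ray $\mathbf{G}$. Recall that in the parametrization used here (the Beltrami equation with $\F=\F_v(q)$, where $q$ is the Hubbard--Masur differential of $\F$) the extremal length of $\F$ grows like $e^{2t}$ along $\mathbf{G}(t)$, while $\operatorname{Ext}_{\mathbf{G}(t)}(\G)$ decays like $e^{-2t}$. First I would rewrite the defining limit using Kerckhoff's formula in its symmetric form, $d_{\mathcal{T}}(X,\mathbf{G}(t))=\tfrac12\log\sup_{\gamma\in\mathcal{S}}\operatorname{Ext}_{\mathbf{G}(t)}(\gamma)/\operatorname{Ext}_X(\gamma)$, so that
$$ d_{\mathcal{T}}(X,\mathbf{G}(t))-t=\frac12\log\sup_{\gamma\in\mathcal{S}}\frac{e^{-2t}\operatorname{Ext}_{\mathbf{G}(t)}(\gamma)}{\operatorname{Ext}_X(\gamma)}. $$
The whole problem is thereby reduced to understanding $\lim_{t\to\infty}e^{-2t}\operatorname{Ext}_{\mathbf{G}(t)}(\gamma)$ and to justifying that this limit may be taken inside the supremum.

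The central step is the asymptotic identity $\lim_{t\to\infty}e^{-2t}\operatorname{Ext}_{\mathbf{G}(t)}(\gamma)=E_{\F}(\gamma)$ for every $\gamma\in\mathcal{S}$. To see where the weight $E_{\F}$ comes from, I would work on the flat surface $(X_t,q_t)$ associated to $\mathbf{G}(t)$: along the geodesic the horizontal direction is dilated by $e^t$, so $\gamma$ acquires horizontal length approximately $e^t\,i(\F,\gamma)$. Removing the critical graph splits $q_t$ into the subsurfaces $S_j$ carrying the ergodic components $\F_j$, and the area of the piece $S_j$ is $\|q|_{S_j}\|=i(\F_j,\G)$ (by the equality case of Minsky's inequality applied to the restricted differential, where $\operatorname{Ext}(\F_j)=\operatorname{Ext}(\G|_{S_j})=\|q|_{S_j}\|$). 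Since extremal length is additive over the disjoint pieces that $\gamma$ must cross, the contribution of $S_j$ being $(e^t i(\F_j,\gamma))^2/i(\F_j,\G)$, one obtains $\operatorname{Ext}_{\mathbf{G}(t)}(\gamma)\sim e^{2t}\sum_j i(\F_j,\gamma)^2/i(\F_j,\G)=e^{2t}E_{\F}(\gamma)$. The flat metric yields this as a lower bound at once; the matching upper bound for a single curve comes from using $|q_t|$ as a competitor in the dual formula of Corollary \ref{coro:ext}.

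With the pointwise limit in hand, the lower bound $\varliminf_t\big(d_{\mathcal{T}}(X,\mathbf{G}(t))-t\big)\ge\tfrac12\log\sup_\gamma E_{\F}(\gamma)/\operatorname{Ext}_X(\gamma)$ is immediate: fix $\gamma$, pass to the limit, then take the supremum. The hard part will be the reverse inequality, which requires that the convergence $e^{-2t}\operatorname{Ext}_{\mathbf{G}(t)}(\gamma)\to E_{\F}(\gamma)$ be controlled \emph{uniformly} in $\gamma\in\mathcal{S}$; otherwise a single curve that is exceptionally short at scale $t$ could inflate the supremum and break the interchange. The clean way to secure this uniformity is through the Gardiner--Masur compactification: the normalized functionals $\gamma\mapsto e^{-t}\sqrt{\operatorname{Ext}_{\mathbf{G}(t)}(\gamma)}$ converge, uniformly on $\mathcal{PMF}$, to $\gamma\mapsto\sqrt{E_{\F}(\gamma)}$, which is precisely the statement that $\mathbf{G}(t)$ converges to the Gardiner--Masur boundary point determined by $\F$. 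Feeding this uniform convergence into the displayed supremum delivers the matching upper bound.

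Putting the two bounds together gives $B_{\mathbf{G}}(X)=\tfrac12\log\sup_{\gamma\in\mathcal{S}}E_{\F}(\gamma)/\operatorname{Ext}_X(\gamma)+c$ with $c$ a constant independent of $X$. Evaluating at $X=X_0$ (where Corollary \ref{coro:ext} together with $\|q\|=i(\F,\G)$ forces the supremum to equal $1$ when $\F$ is indecomposable) identifies $c=-\tfrac12\log\sup_\gamma E_{\F}(\gamma)/\operatorname{Ext}_{X_0}(\gamma)$; in any case subtracting the value at $X_0$ enforces the normalization $B_{\mathbf{G}}(X_0)=0$ and yields exactly the claimed expression. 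Thus the only genuinely delicate ingredient is the uniform-in-$\gamma$ asymptotics of Step two, and I would invest the bulk of the argument there, treating the Kerckhoff reduction and the $X_0$-normalization as bookkeeping.
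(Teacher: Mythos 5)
The paper itself contains no proof of this statement: it is quoted from Walsh \cite{Walsh2012The}, so your proposal has to be measured against Walsh's argument, whose broad outline (Kerckhoff's formula, asymptotics of normalized extremal lengths along the ray, normalization at $X_0$) you have correctly reconstructed. The Kerckhoff reduction and the identification of the constant via $B_{\mathbf{G}}(X_0)=0$ are indeed bookkeeping, and you rightly locate the whole difficulty in the uniform-in-$\gamma$ convergence $e^{-2t}\operatorname{Ext}_{\mathbf{G}(t)}(\gamma)\to E_{\F}(\gamma)$. But at exactly that point you appeal to ``convergence of $\mathbf{G}(t)$ to the Gardiner--Masur boundary point determined by $\F$, uniformly on $\PMF$'' as if it were an available black box. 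For decomposable $\F$ --- several components, or a minimal component carrying several ergodic measures --- that convergence, with the specific limit $E_{\F}$, \emph{is} the main theorem of Walsh's paper; before it, the identification of the Gardiner--Masur limit of a Teichm\"uller ray was known only in the uniquely ergodic and rational cases (Gardiner--Masur, Miyachi). Invoking it makes your argument circular: the genuinely hard analysis --- controlling, uniformly in the test curve, the relative geometry of the pieces of the flat surface carrying the distinct ergodic components $\F_j$, which is where Lenzhen--Masur-type estimates enter --- is entirely absent from the proposal.

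The pointwise asymptotics you sketch also contain concrete errors. For the lower bound, the unweighted flat metric $|q_t|$ gives only $\operatorname{Ext}_{\mathbf{G}(t)}(\gamma)\gtrsim e^{2t}\bigl(\sum_j i(\F_j,\gamma)\bigr)^2/\|q\|$, which by Cauchy--Schwarz is \emph{weaker} than $e^{2t}E_{\F}(\gamma)$; to reach $E_{\F}(\gamma)$ one must use a piecewise-rescaled competitor, weighting the piece carrying $\F_j$ by $\lambda_j\propto i(\F_j,\gamma)/i(\F_j,\G)$ and optimizing --- your ``the flat metric yields this as a lower bound at once'' is false as stated, and ``extremal length is additive over the pieces'' is only an inequality in one direction. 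For the upper bound, your appeal to ``using $|q_t|$ as a competitor in the dual formula of Corollary \ref{coro:ext}'' cannot work: that formula expresses extremal length as a \emph{supremum}, so plugging in any single competitor produces lower bounds only; the matching upper bound $\limsup_t e^{-2t}\operatorname{Ext}_{\mathbf{G}(t)}(\gamma)\le E_{\F}(\gamma)$ is precisely the delicate half. Finally, a sign slip: with the paper's convention ($\F=\F_v(q)$ along the ray), $\operatorname{Ext}_{\mathbf{G}(t)}(\F)=e^{-2t}\operatorname{Ext}_{X_0}(\F)$ \emph{decays}, as it must for $B_{\mathbf{G}}(\mathbf{G}(s))=-s$; your opening claim that it grows like $e^{2t}$ contradicts both this and your own subsequent computation, where the horizontal stretch produces $\ell\approx e^{t}i(\F,\gamma)$.
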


By Walsh's formula,
if the measured foliation $\F\in \MFind$, then
    \begin{eqnarray*}
     B_{\mathbf{G}}(X) &=& \frac{1}{2}\log\sup_{\gamma\in\mathcal{S}} \frac{i(\F, \gamma)^2}{\operatorname{Ext}_{X}(\gamma)}
     -\frac{1}{2}\log\sup_{\gamma\in\mathcal{S}} \frac{i(\F, \gamma)^2}{\operatorname{Ext}_{X_0}(\gamma)} \\
     &=& \frac{1}{2}\log \operatorname{Ext}_{X}(\F)-\frac{1}{2}\log \operatorname{Ext}_{X_0}(\F).
    \end{eqnarray*}
The last equality holds because of Corollary \ref{coro:ext}.
Hence we obtain the following:
\begin{proposition}\label{pro:HS-LS}
Horospheres in $\T$ associated with $\F\in \MFind$ are level sets of the corresponding Busemann functions.
\end{proposition}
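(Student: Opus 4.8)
The plan is to leverage the Busemann function computation that immediately precedes the statement. The heart of the matter is the chain of equalities already derived for $\F\in\MFind$, namely
\begin{equation*}
B_{\mathbf{G}}(X)=\tfrac{1}{2}\log\operatorname{Ext}_{X}(\F)-\tfrac{1}{2}\log\operatorname{Ext}_{X_0}(\F),
\end{equation*}
which in turn rests on Walsh's formula specialized to a single indecomposable component (so that $E_{\F}(\gamma)$ collapses to $i(\F,\gamma)^2$, up to the constant $i(\F,\G)$ which cancels between the two supremum terms) together with Corollary \ref{coro:ext} identifying $\sup_{\gamma}i(\F,\gamma)^2/\operatorname{Ext}_{X}(\gamma)$ with $\operatorname{Ext}_{X}(\F)$. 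First I would simply record that this formula exhibits $B_{\mathbf{G}}$ as a strictly increasing function of $\operatorname{Ext}_{X}(\F)$.

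The key observation is that a strictly monotone reparametrization does not change level sets. Concretely, fix a value $s\in\mathbb{R}_+$ and consider the horosphere $\operatorname{HS}(\F,s)=\{X:\operatorname{Ext}_{X}(\F)=s\}$. Applying the displayed formula, a point $X$ lies on this horosphere if and only if
\begin{equation*}
B_{\mathbf{G}}(X)=\tfrac{1}{2}\log s-\tfrac{1}{2}\log\operatorname{Ext}_{X_0}(\F)=:c,
\end{equation*}
that is, if and only if $X\in\operatorname{L}(B_{\mathbf{G}},c)$. Since the map $s\mapsto c$ is a bijection of $\mathbb{R}_+$ onto $\mathbb{R}$ (the log is strictly increasing and $\operatorname{Ext}_{X_0}(\F)$ is a fixed positive constant), every horosphere $\operatorname{HS}(\F,s)$ coincides with some level set $\operatorname{L}(B_{\mathbf{G}},c)$, and conversely every level set arises this way. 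Thus I would conclude that the family of horospheres associated with $\F$ is exactly the family of level sets of $B_{\mathbf{G}}$.

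One point that deserves a remark is that the Busemann function $B_{\mathbf{G}}$ depends on a choice of geodesic ray $\mathbf{G}$ representing $\F$, in particular on the basepoint $X_0$; but changing $X_0$ only shifts $B_{\mathbf{G}}$ by an additive constant, so the collection of level sets is unchanged. I would note this so that the statement ``horospheres are level sets of the corresponding Busemann functions'' is unambiguous. The main (and essentially only) obstacle is bookkeeping rather than mathematics: one must be careful that the hypothesis $\F\in\MFind$ is genuinely used, since it is precisely what makes $E_{\F}(\gamma)$ proportional to $i(\F,\gamma)^2$ and hence lets Corollary \ref{coro:ext} convert Walsh's formula into a clean expression in $\operatorname{Ext}_{X}(\F)$. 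For a general (decomposable) $\F$ the function $E_{\F}$ is a genuine sum over components and need not be a monotone function of any single extremal length, so the level-set identification would fail; isolating why indecomposability is the right hypothesis is the conceptual content, and the remaining verification is the one-line monotonicity argument above.
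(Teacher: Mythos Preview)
Your argument is correct and is exactly the paper's approach: the proposition is stated immediately after the derivation $B_{\mathbf{G}}(X)=\tfrac{1}{2}\log\operatorname{Ext}_{X}(\F)-\tfrac{1}{2}\log\operatorname{Ext}_{X_0}(\F)$ from Walsh's formula and Corollary~\ref{coro:ext}, and is treated as an immediate consequence of that identity. Your additional remarks on basepoint independence and the necessity of indecomposability are accurate and only make the exposition more complete.
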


\begin{remark}
We can prove that a horosphere associated with a measured foliation $\F$ is the level
of a Busemann function if and only if $\F$ is indecomposable \cite{T}.
\end{remark}

\subsection{Proof of Lemma \ref{HB-MB}}
It is a direct corollary of Lemma \ref{Buse-Limit-Sphere} and Proposition \ref{pro:HS-LS}.

\section{A proof of Royden's Theorem}\label{sec:Royden}
Denote by $\mathcal{B}$ the set of Busemann functions on $\T$.
For $X\in \T$ and $\F\in \MF$, we denote by $B(X,\F)$  the Busemann function
of the Teichm\"uller geodesic ray $\mathbf{G}(t)$ determined by $\mathcal{F}$ and $X = \mathbf{G}(0)$.
The level set  $\{ Z \in \T\ |\ B(X,\F)(Z)=0\}$ will be denoted by $\mathrm{L}(X,\F)$.

Let $f:\T \to \T$ be an isometry of the Teichm\"uller metric.
Since $f$ maps Teichm\"uller geodesic rays to Teichm\"uller geodesic rays,
$f$ defines a transformation $f_*:\mathcal{B}\to \mathcal{B}$.
Given $X\in \T$ and $\F\in \MF$, we denote
$$f_*(B(X,\F))=B(Y, \G),$$
where
$Y=f(X)\in \T$ and $\G\in \MF$.
It projects to map  from $\MF$ to itself, which is still denoted by $f_*$.

\begin{proof}[Proof of Royden's Theorem]
Using the proof of Theorem \ref{Result four}, it suffices to show that
$f$ preserves horospheres determined by indecomposable measured foliations.

\bigskip

\textbf{Step 1:} $f$ preserves level sets of Busemann functions.

In fact, $f$ maps Teichm\"uller geodesic rays to Teichm\"uller geodesic rays,
and $f$ maps metric spheres to metric spheres. Thus by Lemma \ref{Buse-Limit-Sphere},
we have
$$f(\mathrm{L}(X,\F))=\mathrm{L}(Y,\G).$$
Note that $f$ also preserves sub-level sets of Busemann functions.

\bigskip

\textbf{Step 2:} If $\F\in \MFind$, then $\G\in \MFind$.

If not, $\G\notin \MFind$. Let $\G=\sum \G_j$ be its ergodic decomposition.
We claim:

\begin{lemma}\label{lemma:SL}
For all $\G_{j}$ in the ergodic decomposition of $\G$,
The sub-level set $\mathrm{SL}(Y,\G):=\{Z \in \mathcal{T}_{g,n}\ |\ B(Y,\G)(Z)< 0\}$ is contained in the horoball
$\operatorname{HB}(\G_{j}, s)$ for some $s>0$.
\end{lemma}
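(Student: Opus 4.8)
The plan is to read the inclusion off directly from Walsh's formula combined with Corollary \ref{coro:ext}, using nothing more than the nonnegativity of the individual terms in $E_{\mathcal{G}}$. Write $\mathcal{G} = \sum_j \mathcal{G}_j$ for the ergodic decomposition and let $\mathcal{H}$ denote the horizontal foliation of the Hubbard--Masur differential of $\mathcal{G}$ at $Y$. Since $Y$ is the base point $\mathbf{G}(0)$ of the ray defining $B(Y,\mathcal{G})$, Walsh's formula gives
\begin{equation*}
B(Y, \mathcal{G})(Z) = \frac{1}{2}\log \sup_{\gamma \in \mathcal{S}} \frac{E_{\mathcal{G}}(\gamma)}{\operatorname{Ext}_Z(\gamma)} - \frac{1}{2}\log C,
\qquad C := \sup_{\gamma \in \mathcal{S}} \frac{E_{\mathcal{G}}(\gamma)}{\operatorname{Ext}_Y(\gamma)},
\end{equation*}
where $E_{\mathcal{G}}(\gamma) = \sum_j i(\mathcal{G}_j, \gamma)^2 / i(\mathcal{G}_j, \mathcal{H})$. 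Because $\mathcal{H}$ is transverse to $\mathcal{G}$, each $i(\mathcal{G}_j, \mathcal{H}) > 0$, so every term is well defined; and applying Corollary \ref{coro:ext} to each $\mathcal{G}_j$ shows $C$ is a finite positive constant. Thus $Z \in \mathrm{SL}(Y, \mathcal{G})$ precisely when $\sup_\gamma E_{\mathcal{G}}(\gamma)/\operatorname{Ext}_Z(\gamma) < C$.

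The key step is then to discard all terms of $E_{\mathcal{G}}$ except the one indexed by $i$. Since all summands are nonnegative, $E_{\mathcal{G}}(\gamma) \geq i(\mathcal{G}_i, \gamma)^2 / i(\mathcal{G}_i, \mathcal{H})$, and therefore
\begin{equation*}
\sup_{\gamma \in \mathcal{S}} \frac{E_{\mathcal{G}}(\gamma)}{\operatorname{Ext}_Z(\gamma)} \geq \frac{1}{i(\mathcal{G}_i, \mathcal{H})} \sup_{\gamma \in \mathcal{S}} \frac{i(\mathcal{G}_i, \gamma)^2}{\operatorname{Ext}_Z(\gamma)} = \frac{\operatorname{Ext}_Z(\mathcal{G}_i)}{i(\mathcal{G}_i, \mathcal{H})},
\end{equation*}
where the final equality is exactly Corollary \ref{coro:ext} applied to the indecomposable component $\mathcal{G}_i$ at the point $Z$. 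Combining the two displays, every $Z \in \mathrm{SL}(Y, \mathcal{G})$ satisfies $\operatorname{Ext}_Z(\mathcal{G}_i) < C \cdot i(\mathcal{G}_i, \mathcal{H})$. Setting $s := C \cdot i(\mathcal{G}_i, \mathcal{H})$, which is a genuine positive radius, this says precisely $Z \in \operatorname{HB}(\mathcal{G}_i, s)$, so $\mathrm{SL}(Y, \mathcal{G}) \subset \operatorname{HB}(\mathcal{G}_i, s)$.

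I do not expect a serious obstacle here; the whole content is the observation that the denominator $\operatorname{Ext}_Z(\gamma)$ in Walsh's formula is the very one appearing in the extremal-length identity of Corollary \ref{coro:ext}, so that truncating the sum $E_{\mathcal{G}}$ to a single ergodic component converts the supremum into an extremal length. The only point that genuinely needs a line of justification is $i(\mathcal{G}_i, \mathcal{H}) > 0$, which follows from the transversality of the vertical and horizontal foliations of a quadratic differential and is what guarantees that the radius $s$ is finite and positive rather than making the conclusion vacuous.
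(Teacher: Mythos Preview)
Your argument is correct and follows the same route as the paper: apply Walsh's formula, drop all summands of $E_{\mathcal G}$ except the one indexed by $i$, and convert the resulting supremum into $\operatorname{Ext}_Z(\mathcal G_i)$ via Corollary \ref{coro:ext}. You are in fact more explicit than the paper, which absorbs the normalizing constant and writes the coefficients abstractly as $c_j$; your extra remarks on the finiteness of $C$ and on $i(\mathcal G_i,\mathcal H)>0$ fill in points the paper leaves implicit.
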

 \begin{proof}
To prove the claim, we use Walsh's formula for the Busemann function.
Up to an additive constant, $B(Y,\G)(\cdot)$ is of the form
\begin{equation*}
     B(Y,\G)(Z) = \frac{1}{2}\log\sup_{\gamma\in\mathcal{S}} \frac{\sum_k c_k i(\G_k,\gamma)^2}{\operatorname{Ext}_{Z}(\gamma)}.
    \end{equation*}
Thus, up to an additive constant,
$$B(Y,\G)(Z)\geq \frac{1}{2}\log\sup_{\gamma\in\mathcal{S}} \frac{ i(\G_j,\gamma)^2}{\operatorname{Ext}_{Z}(\gamma)}-\log \sqrt{c_j}.$$
 As we have observed in \S \ref{sec:walsh}, by
 Corollary \ref{coro:ext},
  $$\frac{1}{2}\log\sup_{\gamma\in\mathcal{S}} \frac{ i(\G_j,\gamma)^2}{\operatorname{Ext}_{Z}(\gamma)}= \frac{1}{2} \log \mathrm{Ext}_Z(\G_j).$$
  Thus $\mathrm{SL}(Y,\G)$ is contained in some horoball of $\G_j$.
  \end{proof}

Since $\F, \G_{j} \in\MFind$, for any $Z \in \T$, we have
$$\mathrm{L}(Z,\F)=\mathrm{HS}(\F,Z)$$
and
$$\mathrm{L}(Z,\G_{j})=\mathrm{HS}(\G_{j},Z),\ \ \mathrm{SL}(Z,\G_{j})=\mathrm{HB}(\G_{j}, Z).$$
By the above lemma, there is some $Z_{0} \in \T$ such that
$$\mathrm{HB}(\F,X) = f^{-1}(\textrm{SL}(Y, \G))\subset f^{-1}(\mathrm{HB}(\G_j,Z_{0})).$$
Let $\F_j$ be a measured foliation such that
$$\mathrm{SL}(f^{-1}(Z_{0}),\F_j)
= f^{-1}(\mathrm{HB}(\G_j,Z_{0})).$$
Then we have
$$\mathrm{HB}(\F,X)\subset \mathrm{SL}(f^{-1}(Z_{0}),\F_j).$$
Apply Lemma \ref{lemma:SL} again, for each $\F_j$,
$\mathrm{SL}(f^{-1}(Z_{0}),\F_j)$ must contained in some horoballs
associated to each ergodic component of $\F_j$. It follows from Proposition \ref{Main-Lemma-One}
that any such ergodic component of $\F_{j}$ is projectively equivalent to $\F$. It turns out that
each $\F_j$ is projectively equivalent to $\F$. And then all the $\G_j$ are projectively equivalent to each other.
This leads to a contradiction with the assumption that $\G\notin \MFind$.

\bigskip

By \textbf{Step 2}, we have shown that $f$ preserves horospheres of indecomposable measured foliations.
Using the proof of Theorem \ref{Result four}, we conclude that $f$ is an element of the extended mapping class group.
The proof is complete.
\end{proof}

\begin{remark}
We can apply the decomposition of measured foliations and Walsh's formula to study
the action of an isometry on Teichm\"uller geodesics, and then give another proof of Royden's Theorem \cite{T}.
\end{remark}

\bibliographystyle{siam}

\end{document}